\documentclass[11pt]{amsart}

\usepackage{amssymb, amscd}
\usepackage{epsfig, mathtools, tensor}
\usepackage[vmargin=1in, hmargin=1.25in]{geometry}
\usepackage[font=small,format=plain,labelfont=bf,up,textfont=it,up]{caption}
\usepackage{microtype}
\usepackage[shortlabels]{enumitem}
\setlist[itemize]{nosep}
\usepackage[backref=page, bookmarks, bookmarksdepth=2, colorlinks=true, linkcolor=blue, citecolor=blue, urlcolor=blue]{hyperref}
\usepackage{etoolbox}
\usepackage{tikz-cd}
\usepackage{tabularray}
\apptocmd{\thebibliography}{\raggedright}{}{}
\usepackage[only,llparenthesis, rrparenthesis, llbracket,rrbracket]{stmaryrd}
\SetSymbolFont{stmry}{bold}{U}{stmry}{m}{n}
\usepackage{bm}
\usepackage{refcount}

\mathtoolsset{showonlyrefs}

\newcommand{\Figure}[1]{%
  \par
  \nointerlineskip
  \vspace{5pt}%
  \noindent
  \makebox[\linewidth][c]{%
    \psfig{file=#1,scale=1}%
  }%
  \par
  \nointerlineskip
  \vspace{5pt}%
  \noindent\ignorespaces
}

\makeatletter
\patchcmd{\@maketitle}{\global\topskip42\p@\relax}
  {\global\topskip42\p@\relax \vspace*{-38pt}}
  {}{}
\makeatother

\setenumerate[0]{label=(\alph*)}

\renewcommand*{\backref}[1]{}
\renewcommand*{\backrefalt}[4]{%
    \ifcase #1 (Not cited.)%
    \or        (Cited on page~#2.)%
    \else      (Cited on pages~#2.)%
    \fi}

\DeclareSymbolFont{bbold}{U}{bbold}{m}{n}
\DeclareSymbolFontAlphabet{\mathbbold}{bbold}

\DeclareSymbolFont{extraup}{U}{zavm}{m}{n}
\DeclareMathSymbol{\varheartsuit}{\mathalpha}{extraup}{86}

\newcommand{\arxiv}[1]{\href{http://arxiv.org/abs/#1}{{\tt arXiv:#1}}}
\numberwithin{equation}{section}

\theoremstyle{plain}
\newtheorem{theorem}{Theorem}[section]
\newtheorem{maintheorem}{Theorem}
\newtheorem{maincorollary}[maintheorem]{Corollary}
\newtheorem{proposition}[theorem]{Proposition}
\newtheorem{lemma}[theorem]{Lemma}

\newtheorem*{unnumberedclaim}{Claim}

\newtheorem{stepx}{Step}
\newenvironment{step}[1]
 {\renewcommand\thestepx{#1}\stepx}
 {\endstepx}

\providecommand{\previous}{}
\newtheorem*{primedtheoreminner}{Theorem \previous}
\makeatletter
\newenvironment{primedtheorem}[1]{%
  \edef\previous{\getrefnumber{#1}$'$}%
  \let\@currentlabel\previous
  \primedtheoreminner
}{\endprimedtheoreminner}
\makeatother

\providecommand{\previous}{}
\newtheorem*{primeprimedtheoreminner}{Theorem \previous}
\makeatletter
\newenvironment{primeprimedtheorem}[1]{%
  \edef\previous{\getrefnumber{#1}$''$}%
  \let\@currentlabel\previous
  \primeprimedtheoreminner
}{\endprimeprimedtheoreminner}
\makeatother

\providecommand{\previous}{}



\theoremstyle{definition}
\newtheorem{asm}[theorem]{Assumption}

\newtheorem*{genusasm}{Genus assumption}
\newenvironment{genusassumption}[1][]{\begin{genusasm}[#1]\pushQED{\qed}}{\popQED \end{genusasm}}
\newtheorem{defn}[theorem]{Definition}

\newtheorem{notn}[theorem]{Notation}

\newtheorem{covn}[theorem]{Convention}

\theoremstyle{remark}
\newtheorem{rmk}[theorem]{Remark}
\newenvironment{remark}[1][]{\begin{rmk}[#1] \pushQED{\qed}}{\popQED \end{rmk}}
\newtheorem{eg}[theorem]{Example}

\theoremstyle{plain}

\DeclareMathOperator{\Hom}{Hom}

\DeclareMathOperator{\Image}{Im}

\DeclareMathOperator{\Sp}{Sp}

\newcommand\Z{\ensuremath{\mathbb{Z}}}

\DeclareMathOperator{\HH}{H}

\DeclareMathOperator{\Int}{Int}

\newcommand\Span[1]{\ensuremath{\langle #1 \rangle}}

\newcommand\Set[2]{\ensuremath{\left\{\text{#1 $|$ #2}\right\}}}

\newcommand\cD{\ensuremath{\mathcal{D}}}

\newcommand\cH{\ensuremath{\mathcal{H}}}
\newcommand\cI{\ensuremath{\mathcal{I}}}

\newcommand\cK{\ensuremath{\mathcal{K}}}

\newcommand\cP{\ensuremath{\mathcal{P}}}
\newcommand\cQ{\ensuremath{\mathcal{Q}}}
\newcommand\cR{\ensuremath{\mathcal{R}}}

\newcommand\cT{\ensuremath{\mathcal{T}}}

\newcommand\fB{\ensuremath{\mathfrak{B}}}

\newcommand\fS{\ensuremath{\mathfrak{S}}}

\newcommand\bbF{\ensuremath{\mathbb{F}}}

\newcommand\bbS{\ensuremath{\mathbb{S}}}

\newcommand\oH{\ensuremath{\overline{H}}}

\newcommand\oS{\ensuremath{\overline{S}}}

\newcommand\oU{\ensuremath{\overline{U}}}

\newcommand\oa{\ensuremath{\overline{a}}}
\newcommand\ob{\ensuremath{\overline{b}}}

\newcommand\oh{\ensuremath{\overline{h}}}

\newcommand\os{\ensuremath{\overline{s}}}
\newcommand\ot{\ensuremath{\overline{t}}}
\newcommand\ou{\ensuremath{\overline{u}}}

\newcommand\ox{\ensuremath{\overline{x}}}
\newcommand\oy{\ensuremath{\overline{y}}}
\newcommand\oz{\ensuremath{\overline{z}}}

\newcommand\ophi{\ensuremath{\overline{\phi}}}
\newcommand\opsi{\ensuremath{\overline{\psi}}}

\newcommand\ux{\ensuremath{\underline{x}}}

\newcommand\hS{\ensuremath{\widehat{S}}}
\newcommand\hT{\ensuremath{\widehat{T}}}

\newcommand\hi{\ensuremath{\widehat{i}}}
\newcommand\hsi{\ensuremath{\hspace{2pt}\widehat{i}}}
\newcommand\his{\ensuremath{\widehat{i}\hspace{2pt}}}
\newcommand\hsis{\ensuremath{\hspace{2pt}\widehat{i}\hspace{2pt}}}

\newcommand\hips{\ensuremath{\widehat{i'}\hspace{2pt}}}

\newcommand\hj{\ensuremath{\widehat{j}}}
\newcommand\hsj{\ensuremath{\hspace{2pt}\widehat{j}}}
\newcommand\hjs{\ensuremath{\widehat{j}\hspace{2pt}}}
\newcommand\hsjs{\ensuremath{\hspace{2pt}\widehat{j}\hspace{2pt}}}

\newcommand\hks{\ensuremath{\widehat{k}\hspace{2pt}}}
\newcommand\hsks{\ensuremath{\hspace{2pt}\widehat{k}\hspace{2pt}}}

\newcommand\hy{\ensuremath{\widehat{y}}}

\newcommand\hSigma{\ensuremath{\widehat{\Sigma}}}
\newcommand\hiota{\ensuremath{\widehat{\iota}}}

\newdimen\CdotAxis
\newcommand*{\CdotAux}[3]{%
  {%
    \settoheight\CdotAxis{$#2\vcenter{}$}%
    \sbox0{%
      \raisebox\CdotAxis{%
        \scalebox{#1}{%
          \raisebox{-\CdotAxis}{%
            $\mathsurround=0pt #2#3$%
          }%
        }%
      }%
    }%
    \dp0=0pt %
    \sbox2{$#2\bullet$}%
    \ifdim\ht2<\ht0 %
      \ht0=\ht2 %
    \fi
    \sbox2{$\mathsurround=0pt #2#3$}%
    \hbox to \wd2{\hss\usebox{0}\hss}%
  }%
}

\newcommand\arf{\ensuremath{\operatorname{Arf}}}
\newcommand\sig{\ensuremath{\operatorname{sig}}}
\newcommand\Mod{\ensuremath{\operatorname{Mod}}}
\newcommand\QTorelli{\ensuremath{\operatorname{\cQ\cI}}}
\newcommand\Rel{\ensuremath{\operatorname{\cR}}}
\newcommand\KRel{\ensuremath{\operatorname{\cK\cR}}}
\newcommand\QK{\ensuremath{\operatorname{\cQ\cK}}}
\newcommand\Pres{\ensuremath{\operatorname{\fB}}}
\newcommand\Torelli{\ensuremath{\cI}}
\newcommand\link{\ensuremath{\operatorname{lk}}}
\newcommand\BCJ{\ensuremath{\operatorname{B}}}
\newcommand\BCJZ{\ensuremath{\operatorname{B}^0}}
\newcommand\SymSub[1]{\ensuremath{\llbracket #1 \rrbracket}}
\newcommand\uSymSub[1]{\ensuremath{\underline{\llbracket #1 \rrbracket}}}
\newcommand\LGen[1]{\ensuremath{\langle\!\langle #1 \rangle\!\rangle}}
\newcommand\igeom{\ensuremath{i_{\text{geom}}}}

\title{The kernel of the Birman--Craggs--Johnson homomorphism}

\author{Tara E. Brendle}
\address{School of Mathematics \& Statistics; University of Glasgow; University Place; Glasgow G12 8QQ; UK}
\email{tara.brendle@glasgow.ac.uk}

\author{Dan Margalit}
\address{Department of Mathematics; Vanderbilt University; 1326 Stevenson Center Ln, Nashville, TN 37240; USA}
\email{dan.margalit@vanderbilt.edu}

\author{Andrew Putman}
\address{Department of Mathematics; University of Notre Dame; 255 Hurley Hall; Notre Dame, IN 46556; USA}
\email{andyp@nd.edu}

\thanks{DM was supported by NSF grant DMS-2417920.  AP was supported by NSF grant DMS-2305183.}

\begin{document}
    
\newpage
        
\begin{abstract}
For surfaces of genus $g \geq 3$ with at most one boundary component, we 
give explicit generating sets for the kernel of the Birman--Craggs--Johnson homomorphism and the commutator subgroup
of the Torelli group. 
As an application, we give a new proof of Johnson's calculation of the abelianization of
the Torelli group.
\end{abstract} 

\maketitle
\thispagestyle{empty}

\vspace{-25pt}
\section{Introduction}
\label{section:introduction}

Let $\Sigma_g^b$ be a compact oriented genus $g$ surface with $b \in \{0,1\}$ boundary components and let $\Mod_g^b$
be its mapping class group, i.e., the group of isotopy classes of orientation-preserving diffeomorphisms
of $\Sigma_g^b$ that fix $\partial \Sigma_g^b$ pointwise.  These isotopies must also fix $\partial \Sigma_g^b$.
We will omit $b$ from our notation when $b=0$.
The group $\Mod_g^b$ acts on $\HH_1(\Sigma_g^b) \cong \Z^{2g}$ and preserves
the algebraic intersection form.  This gives a homomorphism $\Mod_g^b \rightarrow \Sp_{2g}(\Z)$ whose
kernel $\Torelli_g^b$ is the Torelli group.  These groups fit into the short exact sequence
\[1 \longrightarrow \Torelli_g^b \longrightarrow \Mod_g^b \longrightarrow \Sp_{2g}(\Z) \longrightarrow 1.\]
Building on seminal work of Birman--Craggs \cite{BirmanCraggs}, Johnson \cite{JohnsonBCJ} used the Rochlin
invariant of integral homology $3$-spheres to construct the Birman--Craggs--Johnson (BCJ) homomorphisms $\sigma\colon \Torelli_g^1 \rightarrow \BCJ_3(g)$ and
$\sigma\colon \Torelli_g \rightarrow \BCJZ_3(g)$.  Here $\BCJ_3(g)$ and $\BCJZ_3(g)$ are elementary abelian $2$-groups we will
describe in more detail below.  Johnson proved in \cite{Johnson3} that for $g \geq 3$ the BCJ homomorphisms induce
isomorphisms
\[\HH_1(\Torelli_g^1;\bbF_2) \cong \BCJ_3(g) \quad \text{and} \quad \HH_1(\Torelli_g;\bbF_2) \cong \BCJZ_3(g).\]
Johnson combined this with his previous work on the Johnson homomorphism to calculate
$(\Torelli_g^b)^{\text{ab}} = \HH_1(\Torelli_g^b)$.  We will describe this calculation in more detail below.

\subsection{Goal}
In this paper, we give an explicit generating set for the kernel of the BCJ homomorphism.
This can be viewed as a mod-$2$ analogue of another theorem of Johnson \cite{Johnson2} saying that
the kernel of the Johnson homomorphism is generated by Dehn twists about separating curves.  Using our
theorem, we give a new proof that the BCJ homomorphism detects $\HH_1(\Torelli_g^b;\bbF_2)$, and
thus a new approach to calculating $\HH_1(\Torelli_g^b)$.  We also obtain
an explicit generating set for the commutator subgroup of $\Torelli_g^b$.

\subsection{Rochlin invariant}

The Rochlin invariant (\cite{MilnorNotes}; see also \cite[Chapter XI]{Kirby4D}) is an invariant $\mu(M^3) \in \bbF_2$
of an oriented integral homology $3$-sphere $M^3$.  It can be calculated as follows.  
Since $M^3$ is
parallelizable and $\HH^1(M^3;\bbF_2)=0$, it has a unique spin structure.
Rochlin (\cite{Rochlin1, Rochlin2, RochlinCommentary}; see also \cite[Chapter VII]{Kirby4D})
proved that there exists a smooth compact spin $4$-manifold $W^4$ with $\partial W^4 = M^3$.
For number-theoretic reasons,\footnote{Since $W^4$ is spin its intersection
form is even, and since $\partial W^4$ is an integral
homology $3$-sphere its intersection form is unimodular.  We can thus invoke van der Blij's 
Lemma \cite[Corollary 9.31]{GersteinQuadratic}, which says that the signature of
a unimodular even quadratic form is divisible by $8$.}
its signature $\sig(W^4)$ is divisible by $8$.  The
Rochlin invariant of $M^3$ is
$\mu(M^3) \equiv \sig(W^4)/8$ modulo $2$.  

This is independent of $W^4$.  Indeed, if $V^4$ is another
smooth compact spin $4$-manifold with $\partial V^4 = M^3$, then $X^4 = W^4 \cup_{M^3} (-V^4)$ is
a closed smooth spin $4$-manifold with $\sig(X^4) = \sig(W^4)-\sig(V^4)$.  Another
theorem of Rochlin (\cite{Rochlin2, RochlinCommentary}; see also \cite[Chapter XI]{Kirby4D}) 
says that $\sig(X^4)$ is divisible by $16$,
so $\sig(W^4)/8 \equiv \sig(V^4)/8$ modulo $2$.

\subsection{Birman--Craggs homomorphism}
\label{section:birmancraggs}

Let $\iota\colon \Sigma_g \hookrightarrow \bbS^3$ be an embedding with $\iota(\Sigma_g)$ a 
Heegaard surface,\footnote{Waldhausen \cite{WaldhausenUnique} proved that any two 
genus-$g$ Heegaard surfaces
in $\bbS^3$ are isotopic as unparameterized surfaces, but for $g \geq 1$ there are multiple isotopy classes of 
parameterized Heegaard surfaces.} so $\bbS^3 = \cH \cup_{\iota(\Sigma_g)} \cH'$ for genus-$g$ handlebodies $\cH$ and $\cH'$.
Let $\phi\colon \partial \cH \rightarrow \partial \cH'$ be the diffeomorphism associated to this gluing,\footnote{Note that
$\phi$ is necessarily orientation-reversing.}
so $\bbS^3 = \cH \cup_{\iota(\Sigma_g)} \cH' = \cH \cup_{\phi} \cH'$.  Regard $\iota$ as a diffeomorphism
from $\Sigma_g$ to $\partial \cH$.  For 
$f \in \Mod_g$ represented by a diffeomorphism $F\colon \Sigma_g \rightarrow \Sigma_g$, we have
a diffeomorphism $\iota \circ F \circ \iota^{-1}\colon \partial \cH \rightarrow \partial \cH$.  Set
$M_{\iota,f} = \cH \cup_{\phi \circ (\iota \circ F \circ \iota^{-1})} \cH'$. 
The diffeomorphism type of the closed oriented $3$-manifold $M_{\iota,f}$ only depends on $\iota$ and $f$.  

For $f \in \Torelli_g$, the Mayer--Vietoris exact sequence implies that 
$M_{\iota,f}$ is an integral homology $3$-sphere.
We can thus define a set map $\mu_{\iota}\colon \Torelli_g \rightarrow \bbF_2$ via the formula
$\mu_{\iota}(f) = \mu(M_{\iota,f})$ for $f \in \Torelli_g$.
Birman--Craggs \cite{BirmanCraggs} proved that $\mu_{\iota}$ is a homomorphism.  

\subsection{Birman--Craggs--Johnson (BCJ) homomorphism}
\label{section:bcjintro}

Johnson \cite{JohnsonBCJ} characterized how $\mu_{\iota}$ depends on $\iota$ and used this to package
all the different $\mu_{\iota}$ together into a single homomorphism.  He also showed how to construct an
analogue of this homomorphism for $\Torelli_g^1$.  We give a brief
description of this homomorphism here; see \S \ref{section:bcjhomomorphism} below for more details.

Fix $g \geq 3$ and a symplectic basis $S=\{a_1,b_1,\ldots,a_g,b_g\}$ for $H=\HH_1(\Sigma_g^1;\bbF_2) \cong \bbF_2^{2g}$.
Regard the elements of $S$ as formal variables.
For $n \geq 0$, let $\BCJ_n(g)$ be the $\bbF_2$-vector space of all square-free polynomials $\phi \in \bbF_2[S]$ such that
the degree of $\phi$ is at most $n$.  Here by ``square-free'' we mean that none of the monomials
appearing in $\phi$ have any variable $s \in S$ with exponent greater than $1$.  For instance,
$a_1 b_1 a_2 \in \BCJ_3(g)$ but $a_1 b_1^2 \notin \BCJ_3(g)$.

The Birman--Craggs--Johnson (BCJ) homomorphism is a surjective homomorphism $\sigma\colon \Torelli_g^1 \rightarrow \BCJ_3(g)$.
It is natural in the sense that 
the conjugation action of $\Mod_g^1$ on $\Torelli_g^1$ descends to an action of
$\Sp_{2g}(\Z) \cong \Mod_g^1/\Torelli_g^1$ on $\BCJ_3(g)$ that factors through $\Sp_{2g}(\bbF_2)$.  We warn the reader 
that this is not the obvious action coming from the action of $\Sp_{2g}(\bbF_2)$ on $H$ (see \S \ref{section:bcjhomomorphism}).
On a closed surface, the BCJ homomorphism is a surjective homomorphism $\sigma\colon \Torelli_g \rightarrow \BCJZ_3(g)$, where
$\BCJZ_n(g)$ is a quotient of $\BCJ_n(g)$ we will describe in \S \ref{section:bcjhomomorphism}.

\begin{remark}
The above description of $\BCJ_3(g)$ depends on the choice of
symplectic basis $S=\{a_1,b_1,\ldots,a_g,b_g\}$ for $\HH_1(\Sigma_g^1;\bbF_2)$.
In \S \ref{section:bcjhomomorphism} we will give a basis-free description of it
that will make the action of $\Sp_{2g}(\bbF_2)$ clear.
\end{remark}

\begin{remark}
The dimension of the $\bbF_2$-vector space $\BCJ_3(g)$ is
\[\binom{2g}{0} + \binom{2g}{1} + \binom{2g}{2} + \binom{2g}{3} = \frac{1}{3} (4g^3+5g+3).\]
To form $\BCJZ_3(g)$, we quotient $\BCJ_3(g)$ by a subspace that turns out to be $(2g+1)$-dimensional.  It follows
that the dimension of $\BCJZ_3(g)$ is
$(4g^3+5g+3)/3 - (2g+1) = (4g^3-g)/3$.
\end{remark}

\subsection{Separating twists and bounding pair (BP) maps}

For a simple closed curve $x$ on $\Sigma_g^b$, let
$T_x \in \Mod_g^b$ be the left Dehn twist about $x$.  This is nontrivial if and only if $x$ is nontrivial,
i.e., does not bound a disk.  A {\em separating twist} is a Dehn twist
$T_x$ about a nontrivial simple closed separating curve $x$.  A {\em bounding pair (BP) map} is a product $T_y T_z^{-1}$ with $y$ and $z$ disjoint nonisotopic
nonseparating curves on $\Sigma_g^b$ such that $y \cup z$ separates $\Sigma_g^b$:
\Figure{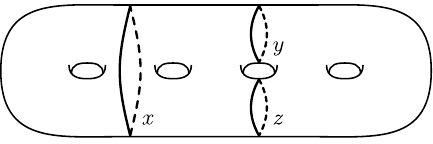}
Separating twists and BP maps lie in $\Torelli_g^b$.
Building on work of Birman \cite{BirmanSiegel}, Powell \cite{PowellTorelli} proved that $\Torelli_g^b$
is generated by separating twists and BP maps.  See \cite{HatcherMargalit, PutmanCutPaste} for
alternate proofs.  For $g \geq 3$, Johnson \cite{Johnson1} later proved that $\Torelli_g^b$ is actually
generated by finitely many BP maps.  Johnson's generating set is enormous.  See \cite{PutmanSmallGenset} for
a smaller one.

\subsection{Elements of the kernel}

Let $\sigma$ be the BCJ homomorphism on $\Torelli_g^b$.  We now describe some elements of $\ker(\sigma)$.
Since the target of $\sigma$ is an elementary abelian $2$-group, all squares of elements
of $\Torelli_g^b$ lie in its kernel.  In particular, squares of separating
twists and squares of BP maps lie in $\ker(\sigma)$.  

Next, all commutators of elements of $\Torelli_g^b$ lie in $\ker(\sigma)$.  For a group $G$ and $x,y \in G$,
our commutator convention is $[x,y] = x y x^{-1} y^{-1}$.  Let $\igeom(-,-)$ be the geometric intersection number.
A {\em basic commutator} in
$\Torelli_g^b$ is an element of the form $[T_x,T_y T_z^{-1}] \in \ker(\sigma)$, where:
\begin{itemize}
\item[(a)] $T_x$ is a separating twist; and
\item[(b)] $T_y T_z^{-1}$ is a BP map; and
\item[(c)] $\igeom(x,y)=\igeom(x,z)=2$; and
\item[(d)] putting $y$ and $z$ into minimal position with $x$, both $x \cup y$ and $x \cup z$ separate $\Sigma_g^b$ into two subsurfaces.
\end{itemize}
Here is an example (left) and a non-example (right):
\Figure{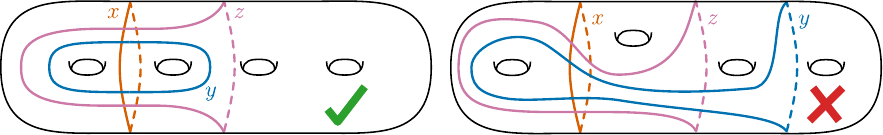}
The non-example on the right-hand side satisfies (a)-(c) but not (d).

For our final family of elements of $\ker(\sigma)$, recall that a {\em pair of pants}
is a $3$-holed sphere.  A {\em tri-separating pants map} is a product $T_x T_y T_z$, where
$x$ and $y$ and $z$ are disjoint nontrivial separating curves such that $x \cup y \cup z$ bounds
a pair of pants:
\Figure{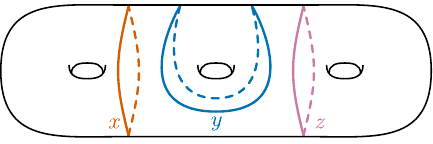}
Since $T_x$ and $T_y$ and $T_z$ all commute, their order does not matter.
We will prove in \S \ref{section:separatingpants} that tri-separating 
pants maps lie in $\ker(\sigma)$; see Lemma~\ref{lemma:seppantsker}.

\subsection{Generation theorems}

Our first main theorem says that the above elements generate the kernel of the BCJ homomorphism:

\begin{maintheorem}[Kernel of BCJ]
\label{maintheorem:genker}
For $g \geq 3$ and $b \in \{0,1\}$, the kernel of the BCJ homomorphism
on $\Torelli_g^b$ is generated by squares of separating twists, squares of BP maps, basic
commutators, and tri-separating pants maps.
\end{maintheorem}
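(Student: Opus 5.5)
Let $\Gamma \subseteq \Torelli_g^b$ be the subgroup generated by squares of separating twists, squares of BP maps, basic commutators, and tri-separating pants maps. Lemma~\ref{lemma:seppantsker} and the discussion above give $\Gamma \subseteq \ker(\sigma)$. Each of the four families is a union of $\Mod_g^b$-conjugacy classes, so $\Gamma$ is normal in $\Mod_g^b$. It therefore suffices to show that $\Torelli_g^b/\Gamma$ has order at most $|\BCJ_3(g)|$ (resp.\ $|\BCJZ_3(g)|$), since $\sigma$ is surjective. I will not try to prove this by comparing $\ker(\sigma)$ with $\Gamma$ directly. Instead I will build a ``presentation-like'' upper bound for the quotient.

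\emph{Step 1: the quotient is an $\bbF_2$-vector space.} By Powell's theorem, $\Torelli_g^b$ is generated by separating twists and BP maps, and in $Q=\Torelli_g^b/\Gamma$ these generators have order dividing $2$. I must also show that $Q$ is abelian, i.e.\ that any two generators commute modulo $\Gamma$. I will first reduce to a small set of configurations using Johnson's result that BP maps of genus $1$ generate for $g\ge 3$, together with the fact that separating twists are products of BP maps. The commutators to check are then between genus-$1$ BP maps $T_yT_z^{-1}$ and either a separating twist or another BP map. For a pair with small intersection, I expect a lantern-type manipulation to rewrite the commutator as a product of conjugates of basic commutators and squares. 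For pairs that intersect more, I plan to argue by induction on $\igeom$ using a connectivity argument: choose an intermediate curve, in the style of Putman's ``cut-and-paste'' method, that intersects both curves less. This commutativity step is where I expect the main obstacle. The basic commutators have a very rigid shape (intersection $2$ together with condition (d)), so showing that they generate all commutators modulo squares is a genuinely geometric statement.

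\emph{Step 2: define a spanning map.} Now assume $Q$ is an $\bbF_2$-vector space on which $\Sp_{2g}(\bbF_2)$ acts. I will define a set map from symbols to $Q$: send a separating curve $x$ to $[T_x]$, and a BP $(y,z)$ to $[T_yT_z^{-1}]$. I will then show that these classes depend only on homological data mod $2$. For a separating twist, the relevant data is the symplectic subspace $\HH_1$ of one side together with the Arf-type/Rochlin data that $\sigma$ records. For a BP map, it is additionally the class $[y]\in H$. The argument will use the fact that two separating curves with the same mod-$2$ data differ by an element of the level-$2$ or Torelli group. Combined with Step 1, this shows the conjugation action of $\Torelli_g^b$ on $Q$ is trivial, and I will need a further argument that the relevant level-$2$ elements also act trivially.

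\emph{Step 3: impose relations and count.} The tri-separating pants relation gives $[T_x]+[T_y]+[T_z]=0$. This is the mod-$2$ analogue of additivity of the Johnson-type invariants under splitting off pants. Using it, together with relations expressing BP maps via chains and lanterns, I will express every class in terms of a spanning set indexed by square-free monomials of degree $\le 3$ in $S$. Concretely, these correspond to the genus-$1$ separating twists and BP maps supported on subsurfaces determined by subsets of $\{1,\ldots,g\}$ and of the basis. This shows that $\dim Q \le \binom{2g}{0}+\cdots+\binom{2g}{3}$, so $\sigma$ induces an isomorphism $Q\cong\BCJ_3(g)$. For $b=0$, I will cap off the boundary and use the Birman exact sequence: the point-pushing subgroup contributes the extra $(2g+1)$ relations, giving $Q\cong \BCJZ_3(g)$.
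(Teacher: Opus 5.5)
Your reduction to bounding $|\Torelli_g^b/\Gamma|$ is fine, but Step~1 has a genuine gap, and it is not a technical one. Proving that $Q=\Torelli_g^b/\Gamma$ is abelian is the same as proving $[\Torelli_g^b,\Torelli_g^b]\subseteq\Gamma$. That includes commutators of two intersecting BP maps, and none of your generators has that shape. ``Lantern-type manipulations'' with an induction on $\igeom$ is a hope, not an argument. The paper never proves this directly; it only follows after the fact from the theorem.

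The missing idea is to use the Johnson homomorphism $\tau$ to sidestep the issue. $\tau$ kills squares of separating twists, tri-separating pants maps and basic commutators, and it sends squares of BP maps onto a generating set of $2\wedge^3 H_{\Z}$. This gives an exact sequence $\QK_g^1\to\Torelli_g^1/\Gamma\to\wedge^3 H\to 1$ with $\QK_g^1=\cK_g^1/\KRel_g^1$. So the entire cubic part of $\BCJ_3(g)$ is accounted for by $\tau$ mod $2$, you only need $|\QK_g^1|\le|\BCJ_2(g)|$, and classes of BP maps never have to be analyzed.

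Inside $\cK_g^1$, commutativity is tractable. Tri-separating pants maps show that $\QK_g^1$ is generated by classes of genus-$1$ separating twists $T_x$. The stabilizer of $[T_x]$ under $\Torelli_g^1$-conjugation contains Johnson's finite generating set, because each of those BP maps is either disjoint from $x$ or forms a basic commutator with $T_x$. Hence $\Torelli_g^1$ (and so $\cK_g^1$) acts trivially on $\QK_g^1$. This gives abelianness without ever commuting two BP maps.

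Step~3 is also unsupported: nothing in your sketch produces a spanning set of size $\sum_{k\le 3}\binom{2g}{k}$. In the paper, even the smaller bound $\dim\QK_g^1\le 2g^2+g+1$ is the hardest part, and it takes two stages.
\begin{enumerate}
\item One first needs the $\Sp_{2g}(\Z)$-action on $\QK_g^1$ to factor through $\Sp_{2g}(\bbF_2)$. This is van den Berg's transvection computation, i.e.\ the ``further argument'' you defer in Step~2. It makes $\QK_g^1$ a quotient of $\Pres_g^1$, the $\bbF_2$-space on symbols $\SymSub{V}$ for mod-$2$ symplectic subspaces, with relations $\SymSub{V}+\SymSub{W}=\SymSub{V\oplus W}$.
\item One then bounds $\dim\Pres_g^1$ by a central-stability induction in the style of Minahan--Putman. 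Its genus-$3$ base case is a computer rank calculation: $336$ generators, $1120$ relations, rank $315$.
\end{enumerate}
A hand argument via ``chains and lanterns'' would have to do at least that much work.

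Two smaller points. In Step~2, $[T_x]$ depends only on the mod-$2$ symplectic subspace of one side; there is no extra ``Arf/Rochlin data'' attached to the curve, since that lives on the target side as functions of quadratic forms. For $b=0$ you must also check that capping carries the boundary-case $\Gamma$ onto the closed-case one. Working at the Johnson-kernel level makes the closed case cheap, because the kernel of $\BCJ_2(g)\to\BCJZ_2(g)$ is spanned by $\sigma(T_{\partial})$ alone.
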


Since the target of the BCJ homomorphism is an abelian group, its kernel contains 
the commutator subgroup.  Our second main theorem says that
the commutator subgroup is generated by all the elements used in Theorem~\ref{maintheorem:genker} except
for squares of BP maps:

\begin{maintheorem}[Commutator subgroup]
\label{maintheorem:gencomm}
For $g \geq 3$ and $b \in \{0,1\}$, the group $[\Torelli_g^b,\Torelli_g^b]$
is generated by squares of separating twists, basic
commutators, and tri-separating pants maps.
\end{maintheorem}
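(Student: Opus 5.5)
Let $\Gamma \subseteq \Torelli_g^b$ be the subgroup generated by squares of separating twists, basic commutators, and tri-separating pants maps. Each of these generators lies in $[\Torelli_g^b,\Torelli_g^b]$:
\begin{itemize}
\item basic commutators are commutators by definition;
\item a square of a separating twist is a commutator by the standard lantern/chain-type arguments;
\item a tri-separating pants map is a product of commutators.
\end{itemize}
The last two points will have to be checked directly. Johnson's description of $\Torelli_g^b/\cK_g^b$ via the Johnson homomorphism $\tau$ shows that separating twists become trivial in $\HH_1$ only up to $2$-torsion, so this needs care. Alternatively one can take these memberships for granted, since they are implicit in the known computation of $\HH_1(\Torelli_g^b)$. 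Independently of that, the plan is to deduce Theorem~\ref{maintheorem:gencomm} from Theorem~\ref{maintheorem:genker}, which I assume is proved first.

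The key observation is that $\Gamma$ is normal in $\Torelli_g^b$, since each generating family is closed under conjugation by $\Mod_g^b$. Moreover, by Theorem~\ref{maintheorem:genker}, $\ker(\sigma)$ is generated by $\Gamma$ together with squares of BP maps. Thus it suffices to show that $\Torelli_g^b/\Gamma$ is abelian. We know that $\ker(\sigma)/\Gamma$ is generated by the images of squares of BP maps, and that $\Torelli_g^b/\ker(\sigma)$ is abelian.

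First I show that $\Torelli_g^b/\Gamma$ is abelian. The group $\Torelli_g^b$ is generated by BP maps (Johnson, $g\ge 3$), so it suffices to show that any two BP maps commute modulo $\Gamma$. Let $f=T_yT_z^{-1}$ and $h$ be BP maps. Since $[\ker\sigma,\Torelli]\subseteq[\Torelli,\Torelli]$, I cannot use Theorem~\ref{maintheorem:genker} circularly here. Instead I would argue geometrically. Write the commutator $[f,h]$ as a product of conjugates of commutators of generators in a nice position, using the connectivity of a suitable complex of BP's. Then show that each such commutator either is trivial (disjoint supports) or reduces to basic commutators and tri-separating pants maps via lantern relations. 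This is the main obstacle.

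A cleaner route avoids this obstacle. Consider $Q=\Torelli_g^b/\Gamma$ and the image $N$ of $\ker\sigma$, which is generated by images of squares of BP maps. Compare with $\HH_1(\Torelli_g^b)$, whose structure Johnson computed: it is an extension in which $2\cdot\HH_1$ corresponds exactly to the image of squares of BP maps, and separating twists have order $2$. I then show that the square of a BP map commutes modulo $\Gamma$ with every BP map. To prove this, I use the fact that $f^2$ for $f=T_yT_z^{-1}$ can be written, via the lantern relation with a separating curve, as $f^2 = (\text{product of separating twists})\cdot(\text{basic-commutator terms})$ modulo $\Gamma$. Hence $N$ is central in $Q$ and is covered by images of separating twists; here Lemma~\ref{lemma:seppantsker}-style pants relations help. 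Finally, images of separating twists commute with BP maps modulo basic commutators. For a BP map intersecting $x$ twice this is the definition of a basic commutator; the general case follows by writing the BP map as a product of BP maps in minimal position relative to $x$, using the change-of-coordinates principle. It follows that $Q$ is abelian, so $[\Torelli_g^b,\Torelli_g^b]\subseteq\Gamma$. The hardest step is the reduction of arbitrary BP maps to ones with $\igeom=2$ and condition (d) relative to a given separating curve; I would attempt it by induction on intersection number using Putman's cut-and-paste techniques.
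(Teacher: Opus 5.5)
\textbf{There is a genuine gap.} The step that carries the content of the theorem, namely that the commutator of two BP maps lies in $\Gamma$, is never proved. Your first route only names it as ``the main obstacle.''

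Your second route ends with ``It follows that $Q$ is abelian,'' and that does not follow. At best you have shown that the images of $\cK_g^b$ and of $\ker\sigma$ are central in $Q=\Torelli_g^b/\Gamma$. The first of these is essentially the paper's Lemma~\ref{lemma:torellitrivial}. The paper obtains it more simply than your induction on intersection numbers: it uses Johnson's generating set, which is adapted to a genus-$1$ separating curve, together with the reduction to genus-$1$ twists via tri-separating pants maps. But a central extension of an abelian group by an abelian group is only $2$-step nilpotent, as the Heisenberg group shows. For BP maps $f,h$ you learn only that $[f,h]$ is central of order at most $2$ in $Q$, not that it is trivial.

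One intermediate claim is also false. For a BP map $f$, $f^2$ cannot be congruent modulo $\Gamma$ to a product of separating twists and basic commutators. All of those lie in $\cK_g^b$, while $\tau(f^2)=2\tau(f)\neq 0$. So the image of $\ker\sigma$ in $Q$ is not covered by separating twists, and your argument for its centrality fails.

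Theorem~\ref{maintheorem:genker} gives you no leverage here. Combine it with Johnson's identity $[\Torelli_g^b,\Torelli_g^b]=\ker\sigma\cap\cK_g^b$. Then Theorem~\ref{maintheorem:gencomm} becomes the assertion $\Rel_g^b\cap\cK_g^b=\KRel_g^b$, i.e.\ injectivity of $\QK_g^b\to\QTorelli_g^b$. The paper explicitly flags this as unavailable at that stage, and it is just a restatement of what you want.

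The missing idea is an upper bound on the size of $\QK_g^b=\cK_g^b/\KRel_g^b$. The paper proves Theorem~\ref{maintheorem:qkg}: the map $\sigma\colon\QK_g^1\to\BCJ_2(g)$ is injective. Since $\sigma$ kills commutators, and commutators lie in $\cK_g^b$, this immediately gives $[\Torelli_g^b,\Torelli_g^b]\subseteq\KRel_g^b$; the closed case follows via Lemma~\ref{lemma:qkgclosed}. Getting that bound is the real work:
\begin{enumerate}
\item Show that $\QK_g^1$ is an $\bbF_2$-vector space on which $\Torelli_g^1$ acts trivially.
\item Show that the $\Sp_{2g}(\Z)$-action factors through $\Sp_{2g}(\bbF_2)$ (Proposition~\ref{proposition:factor2}, via van den Berg's transvection computation). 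Hence $\QK_g^1$ is a quotient of the presented space $\Pres_g^1$.
\item Prove $\dim\Pres_g^1\le 2g^2+g+1=\dim\BCJ_2(g)$ by a central-stability induction, with base cases $g=2$ by hand and $g=3$ by computer.
\end{enumerate}
Your outline offers no substitute for this counting or detection step. Your treatment of the easy inclusion $\Gamma\subseteq[\Torelli_g^b,\Torelli_g^b]$, via Johnson's computation, is fine and matches what the paper does.
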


\begin{remark}
We will be careful to separate the proof of Theorem~\ref{maintheorem:gencomm} from the
proof of Theorem~\ref{maintheorem:genker}.  This is important since our proof
of Theorem~\ref{maintheorem:gencomm}
depends on Johnson's calculation of $\HH_1(\Torelli_g^b)$.  If we intertwined
the proofs of Theorems~\ref{maintheorem:genker} and \ref{maintheorem:gencomm},
we would risk making the new proof of Johnson's calculation of $\HH_1(\Torelli_g^b)$ described
below circular.
\end{remark}

\subsection{Mod-2 commutator subgroup of the Torelli group}

Johnson \cite{Johnson3} proved that for $g \geq 3$ and $b \in \{0,1\}$ the
BCJ homomorphism detects $\HH_1(\Torelli_g^b;\bbF_2)$, so
\[\HH_1(\Torelli_g^1;\bbF_2) \cong \BCJ_3(g) \quad \text{and} \quad \HH_1(\Torelli_g;\bbF_2) \cong \BCJZ_3(g).\]
Theorem~\ref{maintheorem:genker} can be used to give a new proof of this in the
following way.  For a group $G$, the {\em mod-$2$ commutator subgroup} of $G$, denoted $[G,G]_{\bbF_2}$, is the kernel
of the natural map $G \rightarrow \HH_1(G;\bbF_2)$.  It is generated\footnote{In fact, 
for $g_1,g_2 \in G$ we have $[g_1,g_2] = g_1^2(g_1^{-1} g_2)^2 g_2^{-2}$.  
It follows that $[G,G]_{\bbF_2}$ is generated by squares.}  by commutators $[g_1,g_2]$
and squares $g^2$ with $g,g_1,g_2 \in G$.  Squares of separating twists,
squares of BP maps, and basic commutators therefore all lie in $[\Torelli_g^b,\Torelli_g^b]_{\bbF_2}$.  We will also prove
that tri-separating pants maps lie in $[\Torelli_g^b,\Torelli_g^b]_{\bbF_2}$; see Lemma~\ref{lemma:seppantscom}.

Fix $g \geq 3$ and $b \in \{0,1\}$, and let $\sigma$ be the BCJ homomorphism on $\Torelli_g^b$.  Theorem~\ref{maintheorem:genker}
and the above discussion imply that $\ker(\sigma) \subset [\Torelli_g^b,\Torelli_g^b]_{\bbF_2}$.  Conversely,
since the target of $\sigma$ is an $\bbF_2$-vector space we have $[\Torelli_g^b,\Torelli_g^b]_{\bbF_2} \subset \ker(\sigma)$.
We deduce:

\begin{maincorollary}[{Johnson \cite{Johnson3}}]
\label{maincorollary:torellimod2}
For $g \geq 3$ and $b \in \{0,1\}$, the kernel of the BCJ homomorphism is
$[\Torelli_g^b,\Torelli_g^b]_{\bbF_2}$.  Consequently,
the BCJ homomorphisms induce isomorphisms
$\HH_1(\Torelli_g^1;\bbF_2) \cong \BCJ_3(g)$ and $\HH_1(\Torelli_g;\bbF_2) \cong \BCJZ_3(g)$.
\end{maincorollary}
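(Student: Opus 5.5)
The corollary is a formal consequence of Theorem~\ref{maintheorem:genker}, Lemma~\ref{lemma:seppantscom}, and the surjectivity of $\sigma$. The plan is to prove two inclusions between $\ker(\sigma)$ and $[\Torelli_g^b,\Torelli_g^b]_{\bbF_2}$ and then apply the first isomorphism theorem.

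For the inclusion $[\Torelli_g^b,\Torelli_g^b]_{\bbF_2} \subset \ker(\sigma)$, I use only the structure of the target. The target $\BCJ_3(g)$ (respectively $\BCJZ_3(g)$) is an elementary abelian $2$-group. Hence $\sigma$ kills every commutator $[g_1,g_2]$ and every square $g^2$. Since these generate $[\Torelli_g^b,\Torelli_g^b]_{\bbF_2}$, the inclusion follows.

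For the reverse inclusion $\ker(\sigma) \subset [\Torelli_g^b,\Torelli_g^b]_{\bbF_2}$, I invoke Theorem~\ref{maintheorem:genker}. It suffices to check that each of its four families of generators lies in the mod-$2$ commutator subgroup.
\begin{itemize}
\item Squares of separating twists and squares of BP maps are squares of elements of $\Torelli_g^b$, since separating twists and BP maps lie in $\Torelli_g^b$.
\item Basic commutators $[T_x, T_y T_z^{-1}]$ are commutators of two elements of $\Torelli_g^b$.
\item Tri-separating pants maps lie in $[\Torelli_g^b,\Torelli_g^b]_{\bbF_2}$ by Lemma~\ref{lemma:seppantscom}.
\end{itemize}
This gives $\ker(\sigma) = [\Torelli_g^b,\Torelli_g^b]_{\bbF_2}$.

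For the consequence, surjectivity of $\sigma$ onto $\BCJ_3(g)$ (respectively $\BCJZ_3(g)$), recalled in \S\ref{section:bcjintro}, gives $\Torelli_g^b/\ker(\sigma) \cong \BCJ_3(g)$ (respectively $\BCJZ_3(g)$). By definition, $\Torelli_g^b/[\Torelli_g^b,\Torelli_g^b]_{\bbF_2} = \HH_1(\Torelli_g^b;\bbF_2)$, so the desired isomorphisms follow and are induced by $\sigma$.

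I do not expect any step to be a real obstacle, since all the substance lies in Theorem~\ref{maintheorem:genker} and Lemma~\ref{lemma:seppantscom}. The one point needing care is non-circularity: the proofs of Theorem~\ref{maintheorem:genker} and Lemma~\ref{lemma:seppantscom} must not use Johnson's computation of $\HH_1(\Torelli_g^b)$, as the paper stresses. Lemma~\ref{lemma:seppantscom} should come from an explicit relation expressing $T_xT_yT_z$ via squares and commutators, for example a lantern-type relation, rather than from knowledge of $\HH_1(\Torelli_g^b;\bbF_2)$.
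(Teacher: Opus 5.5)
Your proposal is correct and matches the paper's derivation: $[\Torelli_g^b,\Torelli_g^b]_{\bbF_2} \subset \ker(\sigma)$ because the target of $\sigma$ is an $\bbF_2$-vector space, the reverse inclusion comes from Theorem~\ref{maintheorem:genker} together with Lemma~\ref{lemma:seppantscom}, and Johnson's surjectivity of $\sigma$ then gives the isomorphisms. As a minor aside, the paper does not prove Lemma~\ref{lemma:seppantscom} with a lantern-type relation; it passes to the disk-pushing subgroup $\cD_g^1 \cong \pi_1(U\Sigma_g)$ and uses $\sigma(T_x) \neq 0$ to exclude the one remaining case, which is equally non-circular since it avoids Johnson's computation of $\HH_1(\Torelli_g^b)$.
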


\subsection{Johnson homomorphism}
\label{section:johnsonhomo}

Fix $g \geq 3$ and $b \in \{0,1\}$.
We now describe how Johnson used Corollary~\ref{maincorollary:torellimod2} to calculate $\HH_1(\Torelli_g^b)$.  The
remaining ingredient is the Johnson homomorphism.  
Let $H_{\Z} = \HH_1(\Sigma_g^b) \cong \Z^{2g}$.  We call this $H_{\Z}$ since in this paper $H$ will always mean $\HH_1(\Sigma_g;\bbF_2) \cong \bbF_2^{2g}$.  The Johnson homomorphisms are surjective homomorphisms
\[\tau\colon \Torelli_g^1 \rightarrow \wedge^3 H_{\Z} \quad \text{and} \quad \tau\colon \Torelli_g \rightarrow (\wedge^3 H_{\Z})/H_{\Z}.\]
Here $H_{\Z}$ is embedded in $\wedge^3 H_{\Z}$ via the map $h \mapsto h \wedge \omega$, where
$\omega \in \wedge^2 H_{\Z} \cong \wedge^2 H_{\Z}^{\ast}$ is the symplectic form.
See \cite{JohnsonHomo} for the original construction, \cite{JohnsonSurvey} for a survey of applications and
other constructions, and \cite[\S 6.6]{FarbMargalitPrimer} for a textbook reference.
The {\em Johnson kernel} is the kernel $\cK_g^b$ of the Johnson homomorphism $\tau$.  We therefore have short
exact sequences
\[\begin{tikzcd}[column sep=small, row sep=0pt]
1 \arrow{r} & \cK_g^1 \arrow{r} & \Torelli_g^1 \arrow{r} & \wedge^3 H_{\Z} \arrow{r} & 1, \\
1 \arrow{r} & \cK_g   \arrow{r} & \Torelli_g   \arrow{r} & (\wedge^3 H_{\Z})/H_{\Z}   \arrow{r} & 1.
\end{tikzcd}\]
Since the target
of $\tau$ is abelian, we have $[\Torelli_g^b,\Torelli_g^b] \subset \cK_g^b$.  
Johnson (\cite{Johnson2}, see \cite{PutmanJohnson} for an alternate proof) 
showed that $\cK_g^b$ is generated by separating twists.\footnote{One can view Theorem~\ref{maintheorem:genker} as
an analogue of this for the BCJ homomorphism.}  In addition, for $g \geq 3$ Johnson showed in \cite{Johnson2} that for a separating
twist $T_x$ we have $T_x^2 \in [\Torelli_g^b,\Torelli_g^b]$.  This calculation also appears in \cite{PutmanJohnson}.  It
implies that $\cK_g^b/[\Torelli_g^b,\Torelli_g^b]$ is an elementary abelian $2$-group.  Moreover, since
the target of $\tau$ is free abelian we deduce that the torsion subgroup
of $\HH_1(\Torelli_g^b)$ is exactly the elementary abelian $2$-group $\cK_g^b/[\Torelli_g^b,\Torelli_g^b]$.

\subsection{Abelianization}
\label{section:torelliabelianization}

Combined with Corollary~\ref{maincorollary:torellimod2}, the above discussion implies that $\HH_1(\Torelli_g^b)$ is detected
by the Johnson homomorphism $\tau$ (which detects the torsion-free part) and the BCJ homomorphism $\sigma$ (which detects the $2$-torsion).
Moreover, the torsion subgroup of $\HH_1(\Torelli_g^b)$ is the image of $\cK_g^b$ under the map
$\cK_g^b \rightarrow \HH_1(\Torelli_g^b)$.  
Johnson proved that $\sigma(\cK_g^1) = \BCJ_2(g)$ and $\sigma(\cK_g)=\BCJZ_2(g)$.  Also, letting
$H = \HH_1(\Sigma_g^b;\bbF_2)$ we can identify $\BCJ_3(g)/\BCJ_2(g)$ with $\wedge^3 H$ and
$\BCJZ_3(g)/\BCJZ_2(g)$ with $(\wedge^3 H)/H$.  Making these identifications, Johnson proved that there are commutative 
diagrams
\[\begin{tikzcd}[column sep=tiny]
1 \arrow{r} & \cK_g^1 \arrow[two heads]{d}{\sigma} \arrow{r} & \Torelli_g^1 \arrow[two heads]{d}{\sigma} \arrow{r}{\tau} & \wedge^3 H_{\Z} \arrow[two heads]{d}{\substack{\text{reduce} \\ \text{mod } 2}} \arrow{r} & 1 \\
0 \arrow{r} & \BCJ_2(g)  \arrow{r}                              & \BCJ_3(g) \arrow{r}    & \BCJ_3(g)/\BCJ_2(g) \arrow{r} & 0
\end{tikzcd}
\enspace \text{and} \enspace
\begin{tikzcd}[column sep=tiny]
1 \arrow{r} & \cK_g \arrow[two heads]{d}{\sigma} \arrow{r} & \Torelli_g \arrow[two heads]{d}{\sigma} \arrow{r}{\tau} & (\wedge^3 H_{\Z})/H_{\Z} \arrow[two heads]{d}{\substack{\text{reduce} \\ \text{mod } 2}} \arrow{r} & 1 \\
0 \arrow{r} & \BCJZ_2(g)  \arrow{r}                          & \BCJZ_3(g) \arrow{r}  & \BCJZ_3(g)/\BCJZ_2(g) \arrow{r} & 0.
\end{tikzcd}\]
From this, we see that $\BCJ_2(g)$ and $\BCJZ_2(g)$ are isomorphic to the $2$-torsion in $\HH_1(\Torelli_g^1)$ and $\HH_1(\Torelli_g)$, respectively.  We conclude:

\begin{maincorollary}[{Johnson \cite{Johnson3}}]
\label{maincorollary:h1torelli}
For $g \geq 3$, we have $\HH_1(\Torelli_g^1) \cong \BCJ_2(g) \oplus \wedge^3 H_{\Z}$ and
$\HH_1(\Torelli_g) \cong \BCJZ_2(g) \oplus (\wedge^3 H_{\Z})/H_{\Z}$.
\end{maincorollary}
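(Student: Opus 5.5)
We derive Corollary~\ref{maincorollary:h1torelli} from Corollary~\ref{maincorollary:torellimod2}, the Johnson homomorphism, and the commutative diagrams of \S\ref{section:torelliabelianization}. We treat $b=1$; the closed case is identical after replacing $\wedge^3 H_{\Z}$ by $(\wedge^3 H_{\Z})/H_{\Z}$ and $\BCJ$ by $\BCJZ$.

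First, $\tau$ factors through $\HH_1(\Torelli_g^1)$ because its target is abelian. Since $[\Torelli_g^1,\Torelli_g^1]\subset \cK_g^1$, this gives a short exact sequence
\[0 \longrightarrow \cK_g^1/[\Torelli_g^1,\Torelli_g^1] \longrightarrow \HH_1(\Torelli_g^1) \longrightarrow \wedge^3 H_{\Z} \longrightarrow 0.\]
The quotient $\wedge^3 H_{\Z}$ is free abelian, so the sequence splits. Hence
\[\HH_1(\Torelli_g^1)\cong T\oplus \wedge^3 H_{\Z}, \qquad T=\cK_g^1/[\Torelli_g^1,\Torelli_g^1].\]
By Johnson's result recalled in \S\ref{section:johnsonhomo}, $\cK_g^1$ is generated by separating twists whose squares lie in $[\Torelli_g^1,\Torelli_g^1]$. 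So $T$ is an elementary abelian $2$-group, and it is exactly the torsion subgroup. It remains to show $T\cong \BCJ_2(g)$.

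By the left-hand diagram, $\sigma$ maps $\cK_g^1$ onto $\BCJ_2(g)$, and it kills commutators. It therefore induces a surjection $\bar\sigma\colon T\to \BCJ_2(g)$, and we must show $\bar\sigma$ is injective. Since $\wedge^3 H_{\Z}$ is free abelian, the splitting gives
\[\HH_1(\Torelli_g^1;\bbF_2)=\HH_1(\Torelli_g^1)\otimes \bbF_2 \cong T\oplus \wedge^3 H,\]
using $T\otimes\bbF_2=T$ because $T$ is elementary abelian. By Corollary~\ref{maincorollary:torellimod2}, $\sigma$ induces an isomorphism $\HH_1(\Torelli_g^1;\bbF_2)\to\BCJ_3(g)$. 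The injection $T\hookrightarrow \HH_1(\Torelli_g^1;\bbF_2)$ is induced by inclusion $\cK_g^1\subset\Torelli_g^1$, so the restriction of this isomorphism to $T$ is $\bar\sigma$ followed by $\BCJ_2(g)\subset\BCJ_3(g)$. Consequently $\bar\sigma$ is injective, and $T\cong\BCJ_2(g)$.

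As a consistency check, the dimensions match: the dimension of $\BCJ_3(g)$ is that of $\BCJ_2(g)$ plus $\binom{2g}{3}=\dim\wedge^3 H$, which agrees with the identification $\BCJ_3/\BCJ_2\cong\wedge^3H$ and the right-hand square of the diagram.

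The one point needing care is the injectivity of $T\to\HH_1(\Torelli_g^1;\bbF_2)$. This relies on the torsion-freeness of $\wedge^3 H_{\Z}$, and in the closed case of $(\wedge^3 H_{\Z})/H_{\Z}$. The latter holds because $h\mapsto h\wedge\omega$ has a torsion-free cokernel: $\omega$ is primitive, and contraction with the symplectic form gives a map $\wedge^3 H_{\Z}\to H_{\Z}$ which composed with the inclusion is multiplication by $g-1$. So this step should be checked carefully rather than assumed.
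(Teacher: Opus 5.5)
Your argument is correct and is essentially the paper's own. Both combine Corollary~\ref{maincorollary:torellimod2} with the Johnson homomorphism, Johnson's facts that $T_x^2\in[\Torelli_g^b,\Torelli_g^b]$ and that $\sigma$ maps $\cK_g^1$ (resp.\ $\cK_g$) onto $\BCJ_2(g)$ (resp.\ $\BCJZ_2(g)$), and the torsion-freeness of the target of $\tau$; your splitting-and-tensoring step just makes explicit the paper's remark that $\HH_1(\Torelli_g^b)$ is jointly detected by $\tau$ and $\sigma$.

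One small correction concerns your side justification that $(\wedge^3 H_{\Z})/H_{\Z}$ is torsion-free, a fact the paper simply takes as known. The identity $C(h\wedge\omega)=(g-1)h$ only shows that this torsion is killed by $g-1$. To get torsion-freeness, fix a symplectic basis $\{a_1,b_1,\ldots,a_g,b_g\}$ of $H_{\Z}$. For any $j\neq i$, the coefficient of $a_i\wedge a_j\wedge b_j$ (resp.\ $b_i\wedge a_j\wedge b_j$) in $h\wedge\omega$ is $\pm$ the $a_i$- (resp.\ $b_i$-) coordinate of $h$. Hence $h\wedge\omega\in n(\wedge^3 H_{\Z})$ forces $h\in nH_{\Z}$, which is exactly what is needed.
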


\begin{remark}
The direct sum decompositions of $\HH_1(\Torelli_g^b)$ in Corollary \ref{maincorollary:h1torelli} are not canonical.
One can show there are no such decompositions that are preserved by the action of $\Sp_{2g}(\Z)$.
\end{remark}

\subsection{Proof ideas and outline of the paper}
\label{section:proofideas}

Our proofs of Theorems~\ref{maintheorem:genker} and \ref{maintheorem:gencomm} use a mixture of topological and representation-theoretic reasoning.
For simplicity, we restrict to the special case of surfaces with one boundary
component.  

In this case, recall that Theorem~\ref{maintheorem:genker} says that the 
kernel of the BCJ homomorphism $\sigma\colon \Torelli_g^1 \rightarrow \BCJ_3(g)$
is generated by squares of separating twists, squares of BP maps, basic
commutators, and tri-separating pants maps.  These purported generators lie in $\ker(\sigma)$.
The subgroup $\Rel_g^1$ of $\Torelli_g^1$ they generate is normal since each family of generators
is invariant under conjugation by $\Mod_g^1$.  Let $\QTorelli_g^1 = \Torelli_g^1/\Rel_g^1$.
There is an induced map $\QTorelli_g^1 \rightarrow \BCJ_3(g)$.  Theorem~\ref{maintheorem:genker}
can be restated as saying that this induced map is an isomorphism.  

Similarly, Theorem~\ref{maintheorem:gencomm} says that $[\Torelli_g^1,\Torelli_g^1]$
is generated by squares of separating twists, basic commutators, and
tri-separating pants maps.  As we discussed when we sketched Johnson's calculation
of $\HH_1(\Torelli_g^1)$, it follows from Johnson's work that $[\Torelli_g^1,\Torelli_g^1] \subset \cK_g^1$
and that $[\Torelli_g^1,\Torelli_g^1]$ is the kernel of the BCJ homomorphism
$\sigma\colon \cK_g^1 \rightarrow \BCJ_2(g)$.  

Let $\KRel_g^1$ be the subgroup
of $\cK_g^1$ generated by squares of separating twists, basic commutators, and
tri-separating pants maps.  Just like for $\Rel_g^1$, this is a normal subgroup.
Set $\QK_g^1 = \cK_g^1/\KRel_g^1$.  There is an induced map
$\QK_g^1 \rightarrow \BCJ_2(g)$, and Theorem~\ref{maintheorem:gencomm}
can be restated as saying that this induced map is an isomorphism.

The proof that the induced maps $\QTorelli_g^1 \rightarrow \BCJ_3(g)$ and
$\QK_g^1 \rightarrow \BCJ_2(g)$ are isomorphisms has three parts:

\begin{itemize}[leftmargin=50pt]
\item[{\bf Part \ref{part:1}}.]
The first step will be to show that it is enough to just prove that
the induced map $\QK_g^1 \rightarrow \BCJ_2(g)$ is an isomorphism.
In this step, we also show that the analogous results for closed surfaces
follow from the fact that the map $\QK_g^1 \rightarrow \BCJ_2(g)$ is an isomorphism.
The remaining parts of the paper therefore only consider surfaces with boundary.

\item[{\bf Part \ref{part:2}}.]
The second step establishes properties of $\QK_g^1$ that mimic properties of $\BCJ_2(g)$:
\begin{itemize}
\item $\QK_g^1$ is an elementary abelian $2$-group and hence an $\bbF_2$-vector space; and
\item $\QK_g^1$ has a natural action of $\Sp_{2g}(\bbF_2)$.
\end{itemize}
These two properties can be summarized as saying that
$\QK_g^1$ is a representation of $\Sp_{2g}(\bbF_2)$ over the field $\bbF_2$.
The arguments in this step use
ideas from van den Berg's unpublished PhD thesis \cite{VanDenBergThesis}.

\item[{\bf Part \ref{part:3}}.]
The final step constructs generators and relations for $\QK_g^1$ as a representation
of $\Sp_{2g}(\bbF_2)$.  We then show that these generators and relations imply an
isomorphism $\QK_g^1 \cong \BCJ_2(g)$ of $\Sp_{2g}(\bbF_2)$-representations.  We use
two key tools:
\begin{itemize}
\item The notion of central stability from
Church--Farb's theory of representation stability \cite{ChurchFarbRepStability}, which
was introduced by Putman \cite{PutmanCongruence} and further developed by Putman--Sam \cite{PutmanSamNoetherian}.
\item Recent work of Minahan--Putman \cite{MinahanPutmanRep} giving
a framework for establishing presentations of representations of this sort.  
\end{itemize}
To verify the genus-$3$ base case, we rely on a computer calculation.
\end{itemize}
The paper is divided into three parts corresponding to the three parts of the proof described
above.  We organize it as a sequence of reductions: Part~\ref{part:1} reduces 
Theorems~\ref{maintheorem:genker} and \ref{maintheorem:gencomm} to 
Theorem~\ref{maintheorem:qkg}, Part~\ref{part:2} reduces Theorem~\ref{maintheorem:qkg} to Theorem~\ref{maintheorem:qkgpres},
and Part~\ref{part:3} proves Theorem~\ref{maintheorem:qkgpres}.  

\part{Preliminary results and a reduction to a quotient of the Johnson kernel}
\label{part:1}

We start in \S \ref{section:bcjhomomorphism} by giving more details about the BCJ homomorphism.
Next, in \S \ref{section:separatingpants} we prove that tri-separating pants maps lie in both the
kernel of the BCJ homomorphism and the mod-$2$ commutator subgroup of the Torelli group.
We then discuss some facts about basic commutators in \S \ref{section:simplebasic}.
Finally, in \S \ref{section:quotients} we reduce Theorem~\ref{maintheorem:genker} to Theorem~\ref{maintheorem:qkg},
which is a result about a certain quotient of the Johnson kernel.  The derivation of
Theorem~\ref{maintheorem:gencomm} from Theorem~\ref{maintheorem:qkg} is also in \S \ref{section:quotients}.

\begin{genusassumption}
Throughout Part~\ref{part:1}, we will assume that the genus $g$ satisfies $g \geq 2$.
Near the end of this part we will need to increase this to $g \geq 3$, and we will be careful
to be explicit about this.
\end{genusassumption}

\section{Preliminaries on the Birman--Craggs--Johnson (BCJ) homomorphism}
\label{section:bcjhomomorphism}

Fix $g \geq 2$.  This section surveys Johnson's construction of the BCJ homomorphism \cite{JohnsonBCJ}.

\subsection{Quadratic forms}

Let $H = \HH_1(\Sigma_g;\bbF_2) = \HH_1(\Sigma_g^1;\bbF_2)$ and let $\hiota(-,-)$ be the $\bbF_2$-valued
algebraic intersection form on $H$.  A {\em quadratic form} on $H$ is a set
map $q\colon H \rightarrow \bbF_2$ such that
\[q(x+y) = q(x) + q(y) + \hiota(x,y) \quad \text{for all $x,y \in H$}.\]
If $S = \{a_1,b_1,\ldots,a_g,b_g\}$ is a symplectic
basis for $H$, then for all $\lambda_1,\lambda'_1,\ldots,\lambda_g,\lambda'_g \in \bbF_2$ there exists
a unique quadratic form $q\colon H \rightarrow \bbF_2$ with
$q(a_i) = \lambda_i$ and $q(b_i) = \lambda'_i$ for all $1 \leq i \leq g$.
Let $\Omega_g$ be the set of quadratic forms on $H$.  The set $\Omega_g$ is not closed under
addition.  However, let $H^{\ast} = \Hom(H,\bbF_2)$ be the dual
of $H$.  For a fixed $q_0 \in \Omega_g$, we then have
$\Omega_g = \Set{$q_0 + \phi$}{$\phi \in H^{\ast}$}$.
In other words, $\Omega_g$ is a torsor for $H^{\ast}$.

\subsection{Arf invariant}

A textbook reference for this material is \cite[Chapter III.1]{BrowderSurgery}.
Consider $q \in \Omega_g$.  The {\em Arf invariant} of $q$, denoted $\arf(q)$, is defined as
follows.  Choose a symplectic basis $\{a_1,b_1,\ldots,a_g,b_g\}$ for $H$.  Then
\[\arf(q) = q(a_1) q(b_1) + \cdots + q(a_g) q(b_g).\]
It is a standard fact that $\arf(q)$ does not depend on the choice of the symplectic basis.
For $c \in \bbF_2$, let $\Omega_g(c) = \Set{$q \in \Omega_g$}{$\arf(q) = c$}$.  
The group $\Sp_{2g}(\bbF_2)$ acts on $\Omega_g$ as follows:
\begin{equation}
\label{eqn:spaction1}
(Mq)(h) = q(M^{-1} h) \quad \text{for $M \in \Sp_{2g}(\bbF_2)$ and $q \in \Omega_g$ and $h \in H$}.
\end{equation}
This action preserves the Arf invariant, and $\Sp_{2g}(\bbF_2)$ acts transitively on both $\Omega_g(0)$ and $\Omega_g(1)$.
In other words, the Arf invariant is a complete invariant of the isomorphism
class of a quadratic form.

\subsection{Polynomial functions}

Let $\Omega_g^{\ast}$ be the ring of functions $\mu\colon \Omega_g \rightarrow \bbF_2$.  Via
its action on $\Omega_g$, the group $\Sp_{2g}(\bbF_2)$ acts on $\Omega_g^{\ast}$ as follows:
\begin{equation}
\label{eqn:spaction2}
(M\mu)(q) = \mu(M^{-1} q) \quad \text{for $M \in \Sp_{2g}(\bbF_2)$ and $\mu \in \Omega_g^{\ast}$ and $q \in \Omega_g$}.
\end{equation}
For each $c \in \bbF_2$, we have the constant function $c \in \Omega_g^{\ast}$.  Also,
for $h \in H$, we have $\oh \in \Omega_g^{\ast}$ defined by
$\oh(q) = q(h)$ for $q \in \Omega_g$.
With this notation, we have
\[\overline{h_1+h_2} = \oh_1 + \oh_2 + \hiota(h_1,h_2) \quad \text{for $h_1,h_2 \in H$}.\]
For $\mu \in \Omega_g^{\ast}$, we have $\mu^2 = \mu$.  Motivated by all of this,
let $\bbF_2[\oH]$ be the ring of polynomials in the formal variables $\Set{$\oh$}{$h \in H$}$ and let $\BCJ(g)$
be the quotient of $\bbF_2[\oH]$ by the ideal generated by:
\begin{itemize}
\item $f^2-f$ for $f \in \bbF_2[\oH]$; and
\item $\overline{h_1+h_2} - (\oh_1 + \oh_2 + \hiota(h_1,h_2))$ for all $h_1,h_2 \in H$.
\end{itemize}
The signs are not necessary since we are working over $\bbF_2$, but we include them for clarity.

For a symplectic basis $\{a_1,b_1,\ldots,a_g,b_g\}$ for $H$, elements
of $\BCJ(g)$ can be uniquely expressed as square-free polynomials in the variables
$\{\oa_1,\ob_1,\ldots,\oa_g,\ob_g\}$.  We have a map
$\BCJ(g) \rightarrow \Omega_g^{\ast}$ taking $\oh \in \oH$ to
$\oh \in \Omega_g^{\ast}$.  We claim that this is an isomorphism.  Indeed, identify $\Omega_g$
with $\bbF_2^{2g}$ by identifying $q \in \Omega_g$ with $(q(a_1),q(b_1),\ldots,q(a_g),q(b_g)) \in \bbF_2^{2g}$.
This identifies $\Omega_g^{\ast}$ with the set of functions $\bbF_2^{2g} \rightarrow \bbF_2$.
It is well-known that each such function can be uniquely represented by a square-free polynomial
in $2g$ variables, whence the claim.

Via the isomorphism $\BCJ(g) \cong \Omega_g^{\ast}$, the group $\Sp_{2g}(\bbF_2)$ acts on $\BCJ(g)$.  Combining
\eqref{eqn:spaction1} and \eqref{eqn:spaction2}, this action is as follows for monomials in $\BCJ(g)$:
\begin{equation}
\label{eqn:spaction3}
M(\oh_1\  \oh_2\  \cdots\ \oh_n) = \overline{M h_1}\ \ \overline{M h_2}\ \ \cdots\ \ \overline{M h_n}
\quad \text{for $M \in \Sp_{2g}(\bbF_2)$ and $h_1,\ldots,h_n \in H$}.
\end{equation}
The point here is that the two inverses in \eqref{eqn:spaction1} and \eqref{eqn:spaction2} cancel each
other out.

Elements of $\BCJ(g)$ do not have a well-defined
degree since the relations we impose are not homogeneous.  It therefore inherits a
filtration by degree rather than a grading.  Let $\BCJ_n(g)$ be the image of the subspace of
polynomials of degree at most $n$.  We get an increasing chain of vector spaces
\[0 = \BCJ_{-1}(g) \subsetneq \BCJ_{0}(g) \subsetneq \BCJ_{1}(g) \subsetneq \cdots \subsetneq \BCJ_{2g}(g) = \BCJ(g).\]
Let $\Omega_g(0)^{\ast}$ be the ring of functions $\mu\colon \Omega_g(0) \rightarrow \bbF_2$.
Finally, let $\BCJZ_{n}(g)$ be the image of $\BCJ_{n}(g)$ 
under the restriction map $\Omega_g^{\ast} \rightarrow \Omega_g(0)^{\ast}$.  The action
of $\Sp_{2g}(\bbF_2)$ on $\Omega_g^{\ast} \cong \BCJ(g)$ descends to an action
on $\Omega_g(0)^{\ast}$ preserving each $\BCJZ_n(g)$.  We will later need the following:

\begin{lemma}[{\cite[Lemma 14]{JohnsonBCJ}}]
\label{lemma:bcj2difference}
Let $g \geq 2$ and let $\{a_1,b_1,\ldots,a_g,b_g\}$ be a symplectic basis for
$H = \HH_1(\Sigma_g;\bbF_2)$.  Then the kernel of the map $\BCJ_2(g) \rightarrow \BCJZ_2(g)$
is isomorphic to $\bbF_2$ and is generated by
$\kappa = \oa_1 \ob_1 + \cdots + \oa_g \ob_g$.
\end{lemma}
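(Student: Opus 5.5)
We first check that $\kappa$ lies in the kernel, and then show that the kernel is at most one-dimensional.

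\emph{$\kappa$ is in the kernel.} For $q \in \Omega_g$ we have $\kappa(q) = \sum_i q(a_i)q(b_i) = \arf(q)$. This vanishes identically on $\Omega_g(0)$, so $\kappa$ restricts to zero there. Also $\kappa \neq 0$ in $\BCJ_2(g) \subset \Omega_g^{\ast}$, since $\arf$ takes the value $1$ on some quadratic form. So $\kappa$ spans a one-dimensional subspace of the kernel.

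\emph{The kernel is at most one-dimensional.} Let $\mu \in \BCJ_2(g)$ vanish on $\Omega_g(0)$. Then $\mu$ is supported on $\Omega_g(1)$, and since $\arf$ is the indicator function of $\Omega_g(1)$ we get $\mu = \mu \cdot \arf = \mu\kappa$ in $\Omega_g^{\ast}$. Thus it suffices to show that a degree-$\le 2$ function vanishing on $\Omega_g(0)$ equals $c\kappa$ for some $c \in \bbF_2$.

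The plan is to exploit the coordinates $x_i = \oa_i$ and $y_i = \ob_i$ on $\Omega_g \cong \bbF_2^{2g}$. Here $\Omega_g(0)$ is the zero set of $Q = \sum x_i y_i$, which is a quadric with $2^{2g-1}+2^{g-1}$ points. Write $\mu$ as a square-free polynomial of degree $\le 2$.

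The key step is to restrict $\mu$ to affine subspaces contained in $\Omega_g(0)$ and read off coefficients. For a fixed pair of indices and fixed values elsewhere, we look for points of the form (variables in a small block free, the rest chosen so that $Q=0$).

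First I would dispose of the linear and constant terms. On the subspace $\{y_1 = \cdots = y_g = 0\}$, which lies in $\Omega_g(0)$, the function $\mu$ restricts to a polynomial in the $x_i$ that vanishes identically. Hence every monomial involving only $x$'s has zero coefficient, and symmetrically every monomial involving only $y$'s has zero coefficient. This kills the constant term, all linear terms, and the terms $x_ix_j$ and $y_iy_j$.

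It remains to handle the mixed terms $x_iy_j$. For $i \neq j$ and $g \geq 2$, set all $y_k = 0$ for $k \neq j$ and let $x_j = 0$. Then $Q = 0$ holds on this subspace, and the only surviving monomials are $x_iy_j$ with $i \neq j$. So all such coefficients vanish, and $\mu = \sum c_i x_iy_i$. Finally, on $\Omega_g(0)$ take $x_i = y_i = x_k = y_k = 1$ for $i \neq k$, with all other coordinates $0$ (this needs $g \geq 2$). Evaluating gives $c_i + c_k = 0$, so all $c_i$ are equal and $\mu = c\kappa$.

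The main obstacle is only the bookkeeping of choosing subspaces that lie inside $\Omega_g(0)$. The whole argument rests on the observation that $\Omega_g(0)$ contains large coordinate subspaces, namely Lagrangian-type subspaces where all $x$'s or all $y$'s vanish.
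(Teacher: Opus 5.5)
Your proof is correct. The paper gives no argument of its own for this lemma; it simply quotes \cite[Lemma 14]{JohnsonBCJ}. So your proposal is a self-contained verification rather than a variant of a proof in the text.

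In the coordinates $x_i=\oa_i$, $y_i=\ob_i$ on $\Omega_g\cong\bbF_2^{2g}$, the space $\BCJ_2(g)$ is exactly the span of the square-free monomials of degree at most $2$, since each $\oh$ is affine-linear in these variables. The set $\Omega_g(0)$ is the zero set of $\kappa$. All of your test sets lie in $\Omega_g(0)$:
\begin{itemize}
\item the Lagrangian coordinate subspaces $\{x_1=\cdots=x_g=0\}$ and $\{y_1=\cdots=y_g=0\}$ kill the constant, linear, $x_ix_j$ and $y_iy_j$ coefficients;
\item the subspaces $\{x_j=0,\ y_k=0 \text{ for } k\neq j\}$ kill each $x_ky_j$ with $k\neq j$;
\item the point with $x_i=y_i=x_k=y_k=1$ forces the diagonal coefficients to agree.
\end{itemize}
Uniqueness of square-free representatives of functions is what lets you read off coefficients from each vanishing restriction. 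The hypothesis $g\geq 2$ is used exactly where you say.

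Two small comments. First, the identity $\mu=\mu\kappa$ and the point count of the quadric are never used; the sentence after the identity just restates the goal. Both can be deleted.

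Second, your coefficient-chasing is tailored to degree $2$. That is all the paper needs, since the lemma is only invoked in Lemma~\ref{lemma:qkgclosed}. In degree $3$, however, the kernel of $\BCJ_3(g)\to\BCJZ_3(g)$ is the $(2g+1)$-dimensional space $\kappa\cdot\BCJ_1(g)$ alluded to in the introduction. There one could no longer conclude that coefficients vanish outright, and would instead have to work modulo that subspace.
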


\begin{remark}
Considered as an element of $\Omega_g^{\ast}$, the element $\kappa \in \BCJ_2(g)$ from Lemma~\ref{lemma:bcj2difference}
takes $q \in \Omega_g$ to $\arf(q) \in \bbF_2$. 
\end{remark}

\subsection{BCJ homomorphism, closed case}

Recall from \S \ref{section:birmancraggs} that the Birman--Craggs
homomorphism $\mu_{\iota}\colon \Torelli_g \rightarrow \bbF_2$ depends on the choice of an embedding
$\iota\colon \Sigma_g \hookrightarrow \bbS^3$ such that $\iota(\Sigma_g)$ is a Heegaard surface.  The self-linking
form on $\HH_1(\iota(\Sigma_g);\bbF_2)$ pulls back to a quadratic form $q_{\iota}$ on $H = \HH_1(\Sigma_g;\bbF_2)$:
\[q_{\iota}(h) = \link_{\bbF_2}(c,c^{+}) \in \bbF_2 \quad \text{for $h \in H$ and a cycle $c$ representing $\iota_{\ast}(h) \in \HH_1(\iota(\Sigma_g);\bbF_2)$}.\]
Here $c^{+}$ is the result of pushing $c$ off of $\iota(\Sigma_g) \subset \bbS^3$ in the positive normal
direction.  The Arf invariant of $q_{\iota}$ is $0$.
Johnson \cite{JohnsonBCJ} proved that $\mu_{\iota}$ is entirely determined
by $q_{\iota}$ and that every $q \in \Omega_g(0)$ equals $q_{\iota}$ for some such $\iota\colon \Sigma_g \hookrightarrow \bbS^3$.
Using this, he proved that there is a homomorphism $\sigma\colon \Torelli_g \rightarrow \Omega_g(0)^{\ast}$
taking $f \in \Torelli_g$ to the following $\sigma(f) \in \Omega_g(0)^{\ast}$:
\begin{itemize}
\item Consider $q \in \Omega_g(0)$.  Let $\iota\colon \Sigma_g \hookrightarrow \bbS^3$ be an embedding
whose image is a Heegaard surface with associated quadratic form $q_{\iota} = q$.  Then
$\sigma(f)(q) = \mu_{\iota}(f)$.
\end{itemize}
Johnson then proved that the image of $\sigma$ is exactly $\BCJZ_3(g)$.
This gives the BCJ homomorphism $\sigma\colon \Torelli_g \rightarrow \BCJZ_3(g)$ on a closed surface.

\subsection{BCJ homomorphism, surfaces with boundary}

Johnson defined $\sigma\colon \Torelli_g^1 \rightarrow \BCJ_3(g)$
as follows.  Embed $\Sigma_g^1$ into $\Sigma_{g+1}$.
We get an induced injection $\HH_1(\Sigma_g^1;\bbF_2) \hookrightarrow \HH_1(\Sigma_{g+1};\bbF_2)$.  Using
this, we can restrict quadratic forms on $\HH_1(\Sigma_{g+1};\bbF_2)$ to $\HH_1(\Sigma_g^1;\bbF_2)$.
This restriction can take a quadratic form of Arf invariant $0$ to a quadratic form of arbitrary Arf invariant,
and Johnson proved that the resulting map $\Omega_{g+1}(0) \rightarrow \Omega_g$ is a surjection.  Dualizing,
we get an injection $\Omega_g^{\ast} \hookrightarrow \Omega_{g+1}(0)^{\ast}$.  For all $n \geq 0$, this injection takes
the subspace $\BCJ_n(g)$ of $\Omega_g^{\ast} \cong \BCJ(g)$ to the subspace $\BCJZ_n(g+1)$ of
$\Omega_{g+1}(0)^{\ast}$.  

Turning to the Torelli group, by extending 
elements of $\Torelli_g^1$ to $\Sigma_{g+1}$ by the identity we get an injection $\Torelli_g^1 \hookrightarrow \Torelli_{g+1}$.
Johnson proved that the image of the composition of this injection with
$\sigma\colon \Torelli_{g+1} \rightarrow \BCJZ_3(g+1)$ 
is the image of $\BCJ_3(g)$ in $\BCJZ_3(g+1)$.  This leads to a homomorphism $\sigma\colon \Torelli_g^1 \rightarrow \BCJ_3(g)$
fitting into a commutative diagram
\[\begin{tikzcd}[column sep=small, row sep=small]
\Torelli_g^1 \arrow{r}{\sigma} \arrow[hook]{d} & \BCJ_3(g) \arrow[hook]{d} \\
\Torelli_{g+1} \arrow{r}{\sigma} & \BCJZ_3(g+1).
\end{tikzcd}\]

\subsection{Separating twists}
\label{subsection:septwistsbcj}

Let $b \in \{0,1\}$ and let $T_x$ be a separating twist.  Johnson proved that $\sigma(T_x)$ has the following
description.  Let $S$ be a subsurface of $\Sigma_g^b$ with exactly one boundary component and $\partial S = x$.
There is one choice of $S$ if $b=1$ and two choices if $b=0$:
\Figure{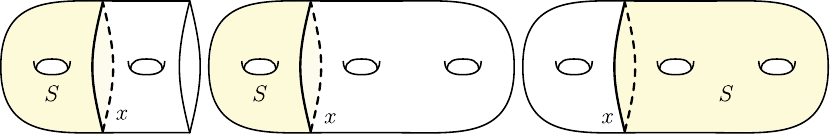}
From $S$, we have a symplectic subspace $\HH_1(S;\bbF_2)$ of $H$.  We can restrict any quadratic form $q$ on $H$ to
$\HH_1(S;\bbF_2)$, and Johnson \cite{JohnsonBCJ} proved that $\sigma(T_x)$ is the map taking
a quadratic form $q$ (of Arf invariant $0$ if $b=0$) to the Arf invariant of the restriction of
$q$ to $\HH_1(S;\bbF_2)$.  For $b=0$, there are two possible choices of $S$; however, this is
independent of the choice since if $S'$ is the subsurface on the other side of $x$ then
\[0 = \arf(q) = \arf(q|_{\HH_1(S;\bbF_2)}) + \arf(q|_{\HH_1(S';\bbF_2)}) \quad \text{for $q \in \Omega_g(0)$}.\] 
We can write down a formula for $\sigma(T_x)$ as follows:
letting $\{x_1,y_1,\ldots,x_h,y_h\}$ be a symplectic basis for $\HH_1(S;\bbF_2)$, we have
$\sigma(T_x) = \ox_1 \oy_1 + \cdots +\ox_h \oy_h$.  For closed surfaces, this
should be interpreted as an element of $\BCJZ_2(g)$.

\begin{remark}
Recalling that the Johnson kernel $\cK_g^b$ is the subgroup of the Torelli group generated
by separating twists, Johnson used this calculation to show that
$\sigma(\cK_g^1) = \BCJ_2(g)$ and $\sigma(\cK_g) = \BCJZ_2(g)$.
Johnson \cite{JohnsonBCJ} also gave a formula for the value of $\sigma$ on a BP map. 
This formula has a nonzero cubic term, and Johnson used it to prove that $\sigma$ takes
$\Torelli_g^1$ onto $\BCJ_3(g)$ and $\Torelli_g$ onto $\BCJZ_3(g)$.  We will not
need this formula.
\end{remark}

\section{Tri-separating pants maps and the BCJ homomorphism}
\label{section:separatingpants}

Fix $g \geq 2$ and $b \in \{0,1\}$.  Recall that a {\em pair of pants}
is a $3$-holed sphere.  A {\em tri-separating pants map} is a product $T_x T_y T_z \in \Torelli_g^b$, where
$x$ and $y$ and $z$ are disjoint nontrivial separating curves such that $x \cup y \cup z$ bounds
a pair of pants.  
This section proves that tri-separating pants maps lie in the kernel of the BCJ homomorphism (Lemma~\ref{lemma:seppantsker}), and then
proves that they also lie in the mod-$2$ commutator subgroup of $\Torelli_g^b$ (Lemma~\ref{lemma:seppantscom}).

\subsection{Kernel of the BCJ homomorphism}

We start by proving that tri-separating pants maps lie in the kernel of the BCJ homomorphism:

\begin{lemma}
\label{lemma:seppantsker}
Fix $g \geq 2$ and $b \in \{0,1\}$.  Let $T_x T_y T_z \in \Torelli_g^b$ be a tri-separating pants map.
Then $T_x T_y T_z$ lies in the kernel of the BCJ homomorphism. 
\end{lemma}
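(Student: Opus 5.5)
Since $\sigma$ is a homomorphism into an $\bbF_2$-vector space, we have $\sigma(T_xT_yT_z)=\sigma(T_x)+\sigma(T_y)+\sigma(T_z)$. We compute each term using Johnson's formula from \S\ref{subsection:septwistsbcj}: $\sigma(T_x)$ is the function sending $q$ to the Arf invariant of $q$ restricted to $\HH_1(S;\bbF_2)$, where $S$ is a one-boundary subsurface with $\partial S = x$. The plan is to choose these subsurfaces compatibly with the pair of pants $P$ bounded by $x \cup y \cup z$, and then apply additivity of the Arf invariant over orthogonal symplectic splittings.

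\textbf{Case $b=1$.} The boundary $\partial \Sigma_g^1$ lies in one of the three components of $\Sigma_g^1 \setminus P$; after relabeling, say it lies on the side of $x$. Let $S_y$ and $S_z$ be the components of the complement of $P$ cut off by $y$ and $z$. These are the required one-boundary subsurfaces for $y$ and $z$, and they avoid $\partial\Sigma_g^1$. For $x$, the one-boundary subsurface not containing $\partial\Sigma_g^1$ is $S_x = S_y \cup P \cup S_z$. Since $P$ is planar with all of its boundary curves separating, $\HH_1(S_x;\bbF_2) = \HH_1(S_y;\bbF_2) \oplus \HH_1(S_z;\bbF_2)$ is an orthogonal direct sum of symplectic subspaces. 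Additivity of the Arf invariant under orthogonal sums is immediate from the defining formula, using a symplectic basis adapted to the splitting. It gives
\[\arf(q|_{\HH_1(S_x)}) = \arf(q|_{\HH_1(S_y)}) + \arf(q|_{\HH_1(S_z)})\]
for every $q$, so the three terms sum to $0$ in $\BCJ_2(g)$. Equivalently, with symplectic bases, $\sigma(T_x)$ is the sum of the quadratic expressions $\sigma(T_y)$ and $\sigma(T_z)$.

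\textbf{Case $b=0$.} The three complementary pieces $S_x,S_y,S_z$ of $P$ have $\HH_1(\Sigma_g;\bbF_2) = \HH_1(S_x)\oplus\HH_1(S_y)\oplus\HH_1(S_z)$ as an orthogonal sum. Choosing $S_x,S_y,S_z$ as the subsurfaces for the three twists, for $q \in \Omega_g(0)$ we get
\[\sigma(T_xT_yT_z)(q) = \arf(q|_{S_x}) + \arf(q|_{S_y}) + \arf(q|_{S_z}) = \arf(q) = 0.\]
This vanishes as a function on $\Omega_g(0)$, hence is zero in $\BCJZ_2(g)$. Alternatively, one can deduce the closed case from the bounded case by capping off, using naturality of $\sigma$ under the inclusion $\Torelli_g^1 \to \Torelli_g$.

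\textbf{Main obstacle.} There is no real computation here; the only care needed is bookkeeping. One must check that the homology of a one-boundary subsurface decomposes orthogonally as described, which uses that $P$ has genus $0$ and that all its boundary curves separate. One must also confirm that Johnson's independence of the choice of $S$ in the closed case allows the choices made above.
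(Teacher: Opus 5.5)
Your proof is correct and follows the paper's argument. In both, $\partial\Sigma_g^1$ is placed on the $x$ side, $P\cup S_y\cup S_z$ is taken as the subsurface for $x$, and the orthogonal splitting $\HH_1(P\cup S_y\cup S_z;\bbF_2)=\HH_1(S_y;\bbF_2)\oplus\HH_1(S_z;\bbF_2)$ makes every quadratic term appear twice. The only difference is in the closed case, and it is cosmetic: the paper reuses the same choice of subsurface for $b=0$ (allowed since either side of $x$ may be used), whereas you take the outer piece $S_x$ and invoke $\arf(q)=0$ on $\Omega_g(0)$, which is equally valid.
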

\begin{proof}
Isotoping $x$ and $y$ and $z$, we can assume
that all three curves lie in $\Int(\Sigma_g^b)$.  Let $P \subset \Sigma_g^b$ be the pair of
pants such that $\partial P = x \sqcup y \sqcup z$.  Let $C_x$ and $C_y$ and $C_z$ be the three components
of $\Sigma_g^b \setminus \Int(P)$, indexed such that $x \subset C_x$ and $y \subset C_y$ and $z \subset C_z$:
\Figure{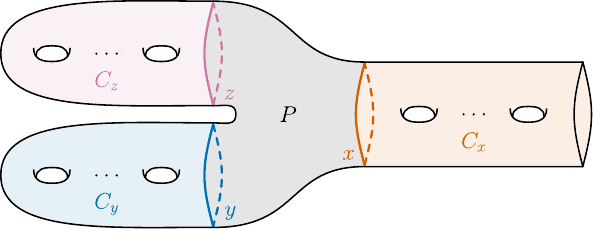}
If $b=1$, then exactly one of $C_x$ and $C_y$ and $C_z$ contains $\partial \Sigma_g^b \cong \bbS^1$.  In that
case, just like in the above figure we can reorder $x$ and $y$ and $z$ to ensure that $\partial \Sigma_g^b \subset C_x$.  Since $T_x$ and $T_y$
and $T_z$ commute, this reordering does not affect the value of $T_x T_y T_z$.

Let $S = P \cup C_y \cup C_z$.  Since $\partial \Sigma_g^b \subset C_x$ if $b = 1$, the following hold:
\begin{itemize}
\item $C_y \cong \Sigma_{h_1}^1$ and $C_z \cong \Sigma_{h_2}^1$ and $S \cong \Sigma_{h_1+h_2}^1$ with $h_1,h_2 \geq 1$; and
\item $x,y,z \subset S$ with $x = \partial S$.
\end{itemize}
We can identify $\HH_1(C_y;\bbF_2) \cong \bbF_2^{2 h_1}$ and 
$\HH_1(C_z;\bbF_2) \cong \bbF_2^{2 h_2}$ and $\HH_1(S;\bbF_2) \cong \bbF_2^{2h_1+2h_2}$
with subspaces of $\HH_1(\Sigma_g^b;\bbF_2)$.  Under this identification,
all three are symplectic subspaces.  Moreover, $\HH_1(C_y;\bbF_2)$ and
$\HH_1(C_z;\bbF_2)$ are orthogonal symplectic subspaces with
$\HH_1(S;\bbF_2) = \HH_1(C_y;\bbF_2) \oplus \HH_1(C_z;\bbF_2)$.
Let 
$\{a_1,b_1,\ldots,a_{h_1},b_{h_1}\}$ and $\{a'_{1},b'_{1},\ldots,a'_{h_2},b'_{h_2}\}$
be symplectic bases for $\HH_1(C_y;\bbF_2)$ and $\HH_1(C_z;\bbF_2)$, respectively.
Their union is a symplectic
basis for $\HH_1(S;\bbF_2)$.  Using the formulas for the BCJ homomorphism
from \S \ref{subsection:septwistsbcj}, we see that
\begin{align*}
\sigma(T_x T_y T_z) = &(\oa_1 \ob_1 + \cdots + \oa_{h_1} \ob_{h_1} + \oa'_1 \ob'_1 + \cdots + \oa'_{h_2} \ob'_{h_2}) \\
                    &+(\oa_1 \ob_1 + \cdots + \oa_{h_1} \ob_{h_1}) + (\oa'_{1} \ob'_{1} + \cdots + \oa'_{h_2} \ob'_{h_2}) \\
                  = &2 \oa_1 \ob_1 + \cdots + 2 \oa_{h_1} \ob_{h_1} + 2 \oa'_1 \ob'_1 + \cdots + 2 \oa'_{h_2} \ob'_{h_2}= 0,
\end{align*}
where the final equality uses the fact that we are working over $\bbF_2$.  The lemma follows.
\end{proof}

\subsection{Tri-separating pants maps and the mod-2 commutator subgroup}

For a group $G$, recall that 
the mod-$2$ commutator subgroup of $G$, denoted $[G,G]_{\bbF_2}$, is the kernel
of the natural map $G \rightarrow \HH_1(G;\bbF_2)$.  It is generated by commutators $[g_1,g_2]$
and squares $g^2$ with $g,g_1,g_2 \in G$. 

\begin{lemma}
\label{lemma:seppantscom}
Fix $g \geq 2$ and $b \in \{0,1\}$.  Let $T_x T_y T_z \in \Torelli_g^b$ be a tri-separating pants map.
Then $T_x T_y T_z \in [\Torelli_g^b,\Torelli_g^b]_{\bbF_2}$.
\end{lemma}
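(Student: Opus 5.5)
The plan is to derive $T_x T_y T_z$ from a lantern relation. Every other term in that relation will be either a square of a separating twist or a BP map, and the BP maps should cancel modulo $[\Torelli_g^b,\Torelli_g^b]_{\bbF_2}$. I keep the notation from the proof of Lemma~\ref{lemma:seppantsker}: $P$ is the pants with $\partial P = x \sqcup y \sqcup z$, and $C_x, C_y, C_z$ are the complementary components. Since $T_x,T_y,T_z$ commute, the labels may be permuted freely.

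\textbf{Step 1: normalize the labels.} I relabel so that $C_x$ has genus $\geq 1$. If $b=1$ and the component containing $\partial\Sigma_g^b$ is an annulus, that curve is boundary parallel, so I choose one of the other two curves as $x$. Those other two components have genus $\geq 1$ because their boundary curves are nontrivial and separating.

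\textbf{Step 2: build the lantern.} Inside $C_x$, pick disjoint nonseparating curves $c,c'$ that together with $x$ bound a pair of pants $Q \subset C_x$. Then $c \cup c'$ cuts off the rest of $C_x$, so $T_cT_{c'}^{-1}$ is a BP map. The union $L = P \cup Q$ is a four-holed sphere with boundary $y,z,c,c'$, and $x$ is one of its three interior "lantern curves". Let $u$ be the lantern curve enclosing $\{y,c\}$ and $v$ the one enclosing $\{y,c'\}$, with the cyclic order chosen so the lantern relation reads
\[T_yT_zT_cT_{c'} = T_x T_u T_v.\]
Solving for $T_x$ and using that boundary twists are central in $\Mod(L)$ gives
\[T_xT_yT_z = T_y^2T_z^2\,(T_cT_u^{-1})(T_{c'}T_v^{-1}).\]
The curve $u$ is disjoint from $c$, and $c\cup u$ bounds $C_y$ together with part of $L$, so $c \cup u$ separates. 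Since $C_y$ has positive genus, $c$ and $u$ are nonisotopic, so $B_1 = T_cT_u^{-1}$ is a BP map. The same argument shows $B_2 = T_{c'}T_v^{-1}$ is a BP map. The squares $T_y^2,T_z^2$ lie in $[\Torelli_g^b,\Torelli_g^b]_{\bbF_2}$, so it remains to show $B_1B_2 \in [\Torelli_g^b,\Torelli_g^b]_{\bbF_2}$.

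\textbf{Step 3: show $B_1 B_2$ is trivial mod $2$ (main obstacle).} This is the step I expect to be hardest. Since $H_1(\Torelli_g^b;\bbF_2)$ has exponent $2$, it suffices to show $B_1$ and $B_2^{\pm1}$ are conjugate by an element of $\Torelli_g^b$ itself. Conjugacy in $\Mod_g^b$ is not enough, because $\Mod_g^b$ acts nontrivially on $H_1(\Torelli_g^b;\bbF_2)$.

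What I would try first is to exploit the symmetry swapping $c$ and $c'$. The mapping class of $L \cup C_x$ that exchanges $c$ and $c'$ fixes $y$ and $z$, and it carries $u$ to $v$ (possibly up to twisting along $x$). I would correct it by twists supported in $C_x \cup P$ so that it acts trivially on $H_1$; there is room to do this because $C_y$ and $C_z$ are untouched and the swap acts on $H_1(C_x)$ only through $[c]=[c']$. The result would be an $f \in \Torelli_g^b$ with $f(c,u)=(c',v)$, giving $B_2 = fB_1f^{-1} \equiv B_1$ and hence $B_1B_2 \equiv B_1^2 \equiv 0$.

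If such an $f$ is hard to arrange, the fallback is to apply the lantern a second time with the roles of $c$ and $c'$ exchanged, producing $B_1^{-1}B_2^{-1}$ modulo squares of separating twists, and then combine the two expressions.
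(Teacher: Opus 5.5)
Steps 1--2 are essentially fine. There are two small slips, neither of which is used later: if $C_x$ has genus one and a single boundary curve, then $c$ and $c'$ cobound an annulus, so $T_cT_{c'}^{-1}=1$ rather than a BP map; and $C_y$ is an annulus if $y$ is boundary parallel.

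\textbf{The gap is in Step~3, and the construction you propose there cannot exist.} Your reduction to $\Torelli_g^b$-conjugacy of $B_1$ and $B_2^{\pm1}$ is valid. The problem is the target: there is no $f\in\Torelli_g^b$ with $f(c,u)=(c',v)$, i.e.\ with $fB_1f^{-1}=B_2$.
\begin{itemize}
\item The Johnson homomorphism $\tau$ vanishes on separating twists, so your identity forces $\tau(B_1)+\tau(B_2)=\tau(T_xT_yT_z)-2\tau(T_y)-2\tau(T_z)=0$.
\item $\tau(B_1)\neq 0$, because the piece of $\Sigma_g^b\setminus(c\cup u)$ not meeting $\partial\Sigma_g^b$ has positive genus (both pieces do if $b=0$).
\item $\tau$ is invariant under conjugation by $\Torelli_g^b$ and takes values in a torsion-free group. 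So $fB_1f^{-1}=B_2$ would give $\tau(B_1)=\tau(B_2)=-\tau(B_1)$, forcing $\tau(B_1)=0$, a contradiction.
\end{itemize}
Geometrically, orient $c$ and $c'$ as boundary curves of $Q$. These are also the orientations induced from the $C_y$-sides of $c\cup u$ and of $c'\cup v$, and $[c]+[c']=-[x]=0$. So any map carrying $(c,u)$ to $(c',v)$ and one $C_y$-side to the other sends $[c]$ to $-[c]\neq 0$. Your heuristic ``$[c]=[c']$'' holds only mod $2$, and no correction by twists removes the sign.

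Your fallback gives nothing. In $\HH_1(\Torelli_g^b;\bbF_2)$ every class equals its inverse, so an expression of $T_xT_yT_z$ as $B_1^{-1}B_2^{-1}$ times squares carries no new information. Moreover, a lantern with $c$ and $c'$ exchanged again produces BP maps with positive twists about $c$ and $c'$, not inverses.

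\textbf{What is missing is the correct pairing.} The right partner of $B_1=T_cT_u^{-1}$ is $B_2^{-1}=T_vT_{c'}^{-1}$. Oriented from the $C_y$-side, $[v]=[c]$, and the complementary pieces of the two pairs have the same images in $\HH_1$. A result such as Johnson's classification of $\Torelli$-orbits of bounding pairs by their homology data would then give $f\in\Torelli_g^b$ with $f(c,u)=(v,c')$, so $B_1B_2=[B_1,f]$. This is a substantial input that you would have to cite and check in the edge cases ($g=2$, boundary-parallel curves).

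The paper avoids it entirely. After reducing to $x=\partial\Sigma_h^1$, it notes that $T_xT_yT_z^{-1}$ lies in the disk-pushing subgroup $\cD_g^1\cong\pi_1(U\Sigma_g)$. There its mod-$2$ class is either $0$ or $[T_x]$, because its image in $\pi_1(\Sigma_g)$ is a separating loop. The BCJ homomorphism then rules out $[T_x]$.
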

\begin{proof}
As in the proof of Lemma~\ref{lemma:seppantsker}, after possibly reordering $x$ and $y$ and $z$
we can find a subsurface $S$ of $\Sigma_g^b$ such that the following hold:
\begin{itemize}
\item $S \cong \Sigma_{h}^1$ for some $h \geq 2$; and
\item $x,y,z \subset S$ with $x = \partial S$.
\end{itemize}
Letting $\Torelli(S)$ be the Torelli group of $S$, extending by the identity gives an injective
homomorphism $\Torelli(S) \hookrightarrow \Torelli_g^b$ mapping $[\Torelli(S),\Torelli(S)]_{\bbF_2}$
into $[\Torelli_g^b,\Torelli_g^b]_{\bbF_2}$.  It is thus sufficient
to prove that $T_x T_y T_z \in [\Torelli(S),\Torelli(S)]_{\bbF_2}$.
Replacing $\Sigma_g^b$ with $S$, we can therefore assume without loss of generality that 
$b = 1$ and $x = \partial \Sigma_g^1$.

Since $T_z^2 \in [\Torelli_g^1,\Torelli_g^1]_{\bbF_2}$ and
$T_x T_y T_z = (T_x T_y T_z^{-1}) T_z^2$, it is enough to prove that $T_x T_y T_z^{-1} \in [\Torelli_g^1,\Torelli_g^1]_{\bbF_2}$.  As notation, for a group $\Gamma$ and $\lambda \in \Gamma$ let $[\lambda]_2$ be the image
of $\lambda$ in $\HH_1(\Gamma;\bbF_2)$.  Our goal is to prove that the
element $T_x T_y T_z^{-1} \in \Torelli_g^1$ satisfies $[T_x T_y T_z^{-1}]_2 = 0$.

Let $\phi\colon \Mod_g^1 \rightarrow \Mod_g$ be the map that glues a disk
to $x = \partial \Sigma_g^1$ and extends mapping classes over that disk by the identity.
Set $\cD_g^1 = \ker(\phi)$.  The element $T_x T_y T_z^{-1}$ lies in $\cD_g^1$ since $T_x$ becomes trivial
and $y$ and $z$ become isotopic when a disk is glued to $x$:
\Figure{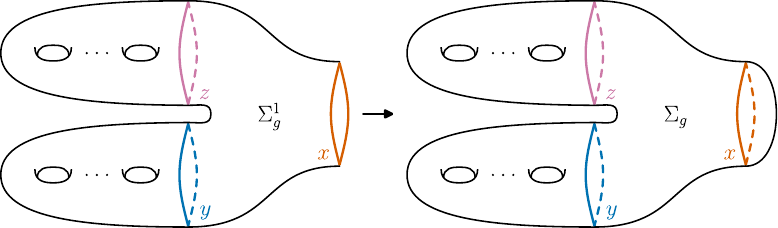}
The group $\cD_g^1$ is called the {\em disk-pushing} subgroup of $\Mod_g^1$.  See
\cite[\S 4.2.5]{FarbMargalitPrimer} for a discussion of it.  Elements
of $\cD_g^1$ ``push'' the glued-on disk around the surface while allowing it to rotate.  Letting
$U\Sigma_g$ be the unit tangent bundle of $\Sigma_g$, since $g \geq 2$ we have
$\cD_g^1 \cong \pi_1(U\Sigma_g)$.  Moreover,
elements of $\cD_g^1$ act trivially on $\HH_1(\Sigma_g^1)$, so $\cD_g^1 \subset \Torelli_g^1$.  
For $\lambda \in \cD_g^1$, let $[\lambda]_2^D$ be the corresponding element of
$\HH_1(\cD_g^1;\bbF_2)$.  From the above, we see that to 
prove that $[T_x T_y T_z^{-1}]_2=0$ in $\HH_1(\Torelli_g^1;\bbF_2)$,
it is enough to prove that $[T_x T_y T_z^{-1}]_2^D=0$.

Let $\Sigma_{g,1}$ be a genus-$g$ surface with one puncture.  Let $\Mod_{g,1}$ 
be the mapping class group of $\Sigma_{g,1}$.  The map
$\phi\colon \Mod_g^1 \rightarrow \Mod_g$ factors as
\[\begin{tikzcd}
\Mod_g^1 \arrow{r}{\phi'} & \Mod_{g,1} \arrow{r}{\phi''} & \Mod_g,
\end{tikzcd}\]
where $\phi'$ glues a punctured disk to $x = \partial \Sigma_g^1$ and extends mapping classes over that punctured 
disk by the identity and $\phi''\colon \Mod_{g,1} \rightarrow \Mod_g$ fills in the puncture.

Set $\cP_{g,1} = \ker(\phi'')$.  The group $\cP_{g,1}$ is called the {\em point-pushing} subgroup of
$\Mod_{g,1}$.  Elements of $\cP_{g,1}$ ``push'' the puncture around the surface, and
$\cP_{g,1} \cong \pi_1(\Sigma_g)$.  The disk-pushing subgroup $\cD_g^1$ and the point-pushing
subgroup $\cP_{g,1}$ fit into the following commutative diagram with exact rows:
\begin{equation}
\label{eqn:unittangentbundleseq}
\begin{tikzcd}[column sep=small, row sep=tiny]
1 \arrow{r} & \Z \arrow{r} \arrow[equals]{d} & \cD_g^1 \arrow{r} \arrow[equals]{d} & \cP_{g,1}       \arrow{r} \arrow[equals]{d} & 1 \\
1 \arrow{r} & \Z \arrow{r}                   & \pi_1(U\Sigma_g)  \arrow{r}         & \pi_1(\Sigma_g) \arrow{r}                   & 1.
\end{tikzcd}
\end{equation}
Here the rows are central extensions, and the central $\Z$ subgroup of $\cD_g^1$ is generated by
$T_x$.  Passing to $\HH_1$ with $\bbF_2$-coefficients, we get an exact sequence
\begin{equation}
\label{eqn:diskpointh1}
\begin{tikzcd}
\bbF_2 \arrow{r} & \HH_1(\cD_g^1;\bbF_2) \arrow{r} & \HH_1(\cP_{g,1};\bbF_2) \arrow{r} & 0,
\end{tikzcd}
\end{equation}
where the image of $\bbF_2$ in $\HH_1(\cD_g^1;\bbF_2)$ is generated by $[T_x]_2^D \in \HH_1(\cD_g^1;\bbF_2)$.

Recall that our goal is to prove that $[T_x T_y T_z^{-1}]_2^D = 0$ in $\HH_1(\cD_g^1;\bbF_2)$.
Let $\oy$ and $\oz$ be the images of $y$ and $z$ in $\Sigma_{g,1}$.  The image
of $T_x T_y T_z^{-1} \in \cD_g^1$ in $\cP_{g,1}$ is thus $T_{\oy} T_{\oz}^{-1}$.  Write
$[T_{\oy} T_{\oz}^{-1}]_2^P$ for its image in $\HH_1(\cP_{g,1};\bbF_2) \cong \HH_1(\Sigma_g;\bbF_2)$.  We claim
that $[T_{\oy} T_{\oz}^{-1}]_2^P=0$.  Indeed, let $\gamma \in \cP_{g,1}$ be the element represented
by the following based separating curve:
\Figure{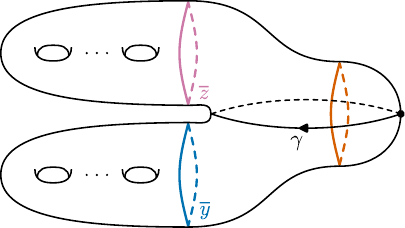}
Since $\oy$ and $\oz$ are the boundary components of a regular neighborhood of this based
curve, the standard formula for the effect of point-pushing along a simple closed curve
(see \cite[Fact 4.7]{FarbMargalitPrimer}) shows that $T_{\oy} T_{\oz}^{-1} = \gamma$.
Separating curves are null-homologous, so this implies that $[T_{\oy} T_{\oz}^{-1}]_2^P = [\gamma]_2^P = 0$,
as claimed.

Since $[T_x T_y T_z^{-1}]_2^D$ lies in the kernel of the map $\HH_1(\cD_g^1;\bbF_2) \rightarrow \HH_1(\cP_{g,1};\bbF_2)$
from \eqref{eqn:diskpointh1}, we conclude that either
$[T_x T_y T_z^{-1}]_2^D = [T_x]_2^D$ or $[T_x T_y T_z^{-1}]_2^D = 0$.
As we will see below, $[T_x]_2^D \neq 0$.  
To prove the lemma, we must therefore prove that $[T_x T_y T_z^{-1}]_2^D \neq [T_x]_2^D$.

Let $\sigma\colon \Torelli_g^1 \rightarrow \BCJ_3(g)$ be the BCJ homomorphism.  Since
the codomain of $\sigma$ is an elementary abelian $2$-group, the restriction of $\sigma$ to
$\cD_g^1$ factors through $\HH_1(\cD_g^1;\bbF_2)$.  It is therefore enough to prove
that $\sigma(T_x T_y T_z^{-1}) \neq \sigma(T_x)$.  Lemma~\ref{lemma:seppantsker} says that
$\sigma(T_x T_y T_z) = 0$, so since $T_z^{-2} \in \ker(\sigma)$ it follows that
$\sigma(T_x T_y T_z^{-1}) = 0$.  However, the formulas in \S \ref{subsection:septwistsbcj}
imply that $\sigma(T_x) \neq 0$.  The lemma follows.
\end{proof}

\section{Basic commutators and the subgroups \texorpdfstring{$\Rel_g^b$}{Ngb} and \texorpdfstring{$\KRel_g^b$}{KRgb}}
\label{section:simplebasic}

Fix $g \geq 2$ and $b \in \{0,1\}$.  In this section, we give an alternate
characterization of the basic commutators and study the subgroups generated
by our purported generators for the kernel of the BCJ homomorphism and the
commutator subgroup of $\Torelli_g^b$.

\subsection{Basic commutators}
Let $\igeom(-,-)$ be the geometric intersection number.  Recall that a basic commutator in
$\Torelli_g^b$ is an element of the form $[T_x,T_y T_z^{-1}] \in \Torelli_g^b$, where:
\begin{itemize}
\item[(a)] $T_x$ is a separating twist; and
\item[(b)] $T_y T_z^{-1}$ is a BP map; and
\item[(c)] $\igeom(x,y)=\igeom(x,z)=2$; and
\item[(d)] putting $y$ and $z$ into minimal position with $x$, both 
$x \cup y$ and $x \cup z$ separate $\Sigma_g^b$ into two subsurfaces.
\end{itemize}

\subsection{Basic commutator criterion}
To clarify the meaning of this, we introduce some definitions.  
Set $H = \HH_1(\Sigma_g^b;\bbF_2)$.
Let $x$ be a separating curve in $\Sigma_g^b$.  Let $S$ and $T$ be the subsurfaces
on either side of $x$.  Let $U \subset H$ be the image of $\HH_1(S;\bbF_2)$ and
let $V \subset H$ be the image of $\HH_1(T;\bbF_2)$.  The subspaces $U$ and $V$ of
$H$ are orthogonal with respect to the algebraic intersection pairing, and $H = U \oplus V$.
We call $H = U \oplus V$ the {\em symplectic splitting} associated to $x$.

Over $\bbF_2$, it makes sense
to talk about the homology class $[\gamma] \in H$ of a simple closed
curve $\gamma$ on $\Sigma_g^b$.  There is no need to choose an orientation
on $\gamma$.  For a BP map $T_y T_z^{-1} \in \Torelli_g^b$, we have
$[y] = [z]$.  The common value $[y] = [z]$ is the {\em $\bbF_2$-homology class}
of the BP map $T_y T_z^{-1}$.  We then have:

\begin{lemma}
\label{lemma:basiccommutators}
Fix $g \geq 2$ and $b \in \{0,1\}$.  Set $H = \HH_1(\Sigma_g^b;\bbF_2)$.
Consider a separating twist $T_x \in \Torelli_g^b$ and a BP map $T_y T_z^{-1} \in \Torelli_g^b$.
Let $H = U \oplus V$ be the symplectic splitting associated to $x$ and let 
$h \in H$ be the $\bbF_2$-homology class of $T_y T_z^{-1}$.  Then 
$[T_x,T_y T_z^{-1}]$ is a basic commutator if and only if the following are
satisfied:
\begin{itemize}
\item[(c$\hspace{2pt}'$)] $\igeom(x,y) \leq 2$ and $\igeom(x,z) \leq 2$; and
\item[(d$\hspace{2pt}'$)] writing $h = u+v$ with $u \in U$ and $v \in V$, we have $u \neq 0$ and $v \neq 0$.
\end{itemize}
\end{lemma}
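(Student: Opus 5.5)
We must show that (a)--(d) are equivalent to (a), (b), (c$'$), (d$'$); conditions (a) and (b) hold throughout. The approach is to relate intersection numbers with the separating curve $x$ to the position of $h$ in the splitting $H = U \oplus V$.

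The first step is a parity and vanishing observation. Since $x$ is separating, $\igeom(x,\gamma)$ is even for every simple closed curve $\gamma$. If $\igeom(x,y)=0$, then $y$ lies in $S$ or in $T$, so $[y]=h$ lies in $U$ or in $V$. This violates (d$'$), which requires $h$ to have nonzero components in both. Hence (d$'$) together with (c$'$) forces $\igeom(x,y)=\igeom(x,z)=2$, which is (c). Conversely (c) trivially implies (c$'$). It remains to show that, given $\igeom(x,y)=\igeom(x,z)=2$, condition (d) is equivalent to (d$'$).

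The second step analyzes a curve $y$ in minimal position with $\igeom(x,y)=2$. Then $y$ is cut by $x$ into two arcs: $\alpha \subset S$ and $\beta \subset T$, each with endpoints on $x$, and each essential by minimality (no bigons). Choose a subarc $\delta$ of $x$ joining the two intersection points. The class $h=[y]$ splits as $[\alpha\cup\delta] + [\beta\cup\delta]$, with the first in $U$ and the second in $V$ over $\bbF_2$. We then want the following claim: $x\cup y$ separates $\Sigma_g^b$ into exactly two pieces if and only if both $\alpha$ and $\beta$ are nonseparating arcs in $S$ and $T$ respectively, if and only if $u=[\alpha\cup\delta]\ne 0$ and $v=[\beta\cup\delta]\ne 0$.

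For the second equivalence, we use that a properly embedded arc in a surface with one boundary component, such as $S$, separates that surface if and only if the closed curve $\alpha\cup\delta$ is null-homologous mod $2$. This follows from duality: $\alpha$ is nonseparating exactly when some closed curve meets it once, and that curve can be pushed off $\delta$. For the first equivalence, $x\cup y$ is a graph with two vertices and four edges. Cutting $S$ along $\alpha$ yields one piece if $\alpha$ is nonseparating and two if it is separating, and likewise for $T$. Counting complementary components, the graph yields exactly two regions precisely when neither arc separates, since each separating arc adds a region. Some care is needed for $b=1$, where one side contains $\partial\Sigma_g^b$; the arguments are unchanged because the boundary just lies inside one region.

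Applying the second step to both $y$ and $z$, which share the class $h$, gives (d)$\Leftrightarrow$(d$'$). The main obstacle is making the region count and the arc-separation/homology criterion rigorous, in particular checking that minimal position prevents degenerate cases and handling the boundary component. I would handle this by an Euler characteristic count on $\Sigma_g^b$ cut along $x\cup y$, combined with the standard fact that an arc separates iff its dual intersection pairing vanishes.
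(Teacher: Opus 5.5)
Your argument is correct and is essentially the paper's proof. Closing the arcs $\alpha,\beta$ with a subarc $\delta\subset x$ is equivalent to the paper's device of collapsing $x$ to a point, which turns $\alpha\cup\delta$ and $\beta\cup\delta$ into the closed curves $\hy_S\subset\hS$ and $\hy_T\subset\hT$ with classes $u$ and $v$. The paper likewise rules out $\igeom(x,y)=0$ via (d$'$), and uses that on a surface with at most one boundary component a simple closed curve is nonseparating if and only if its $\bbF_2$-class is nonzero. The $b=1$ side you flag is handled there by noting that the collapsed surface $\hT$ has only one boundary component; in your version the same point holds because the classes are taken in $H$, where $\partial\Sigma_g^1$ is null-homologous.
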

\begin{proof}
We first introduce some notation.
Let $S$ and $T$ be the subsurfaces on either side of $x$.  
Let $\hSigma$ be the space obtained by collapsing $x$ to a point and
let $\hS$ and $\hT$ be the images of $S$ and $T$ in $\hSigma$.  Both
$\hS$ and $\hT$ are surfaces, and $\hSigma$ is the wedge sum of
$\hS$ and $\hT$:
\Figure{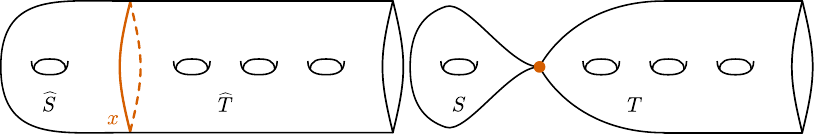}
We can identify $U$ and $V$ with $\HH_1(\hS;\bbF_2)$ and $\HH_1(\hT;\bbF_2)$.
We now divide the proof into two claims:

\begin{unnumberedclaim}
Assume that $[T_x,T_y T_z^{-1}]$ is a basic commutator.  Then (c$\hspace{2pt}'$) and (d$\hspace{2pt}'$) hold.
\end{unnumberedclaim}

Since $[T_x,T_y T_z^{-1}]$ is a basic commutator, we have $\igeom(x,y) =2$ and $\igeom(x,z) = 2$.  It
follows that (c$\hspace{2pt}'$) holds.  Put $y$ into minimal position with $x$, and let
$\hy$ be the image of $y$ in $\hSigma$.  The picture is as follows:
\Figure{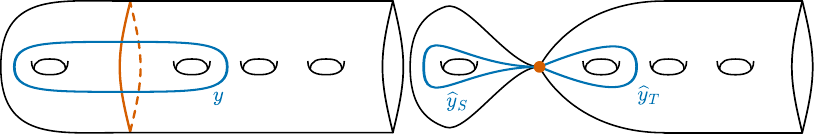}
As this figure shows, $\hy$ is the union of two simple closed curves meeting at a point,
one curve $\hy_S$ in $\hS$ and the other curve $\hy_T$ in $\hT$.  From the definitions,
we have $u = [\hy_S]$ and $v = [\hy_T]$.  Since $[T_x,T_y T_z^{-1}]$ is a basic commutator,
$x \cup y$ separates $\Sigma_g^b$ into two components.  This implies that $y \cap S$ (resp.\ $y \cap T$)
is a nonseparating arc in $S$ (resp.\ $T$), which implies that $\hy_S$ (resp.\ $\hy_T$)
is a nonseparating curve in $\hS$ (resp.\ $\hT$).  
Since nonseparating curves have nonzero $\bbF_2$-homology classes, we conclude that $u \neq 0$
and $v \neq 0$, so (d$\hspace{2pt}'$) holds.

\begin{unnumberedclaim}
Assume that (c$\hspace{2pt}'$) and (d$\hspace{2pt}'$) hold.  Then $[T_x,T_y T_z^{-1}]$ is a basic commutator.
\end{unnumberedclaim}

Put $y$ and $z$ into minimal position with $x$.  We must prove that $\igeom(x,y)=\igeom(x,z)=2$ and that
both $x \cup y$ and $x \cup z$ separate $\Sigma_g^b$ into two components.  The proofs
for $y$ and $z$ are identical, so we will give the details for $y$.

Since $x$ is separating, $\igeom(x,y)$ must be even.  By (c$\hspace{2pt}'$) it
is either $0$ or $2$.  If $\igeom(x,y)=0$, then $y$ lies entirely in either $S$ or $T$.  This implies
that $h = [y]$ lies entirely in $U$ or $V$, contrary to (d$\hspace{2pt}'$).  We conclude that
$\igeom(x,y)=2$.  

Let $\hy$ be the image of $y$ in $\hSigma$.  Since $\igeom(x,y)=2$, this decomposes into
a simple closed curve $\hy_S \subset \hS$ and a simple closed curve $\hy_T \subset \hT$.
The homology classes of $\hy_S$ and $\hy_T$ are $u$ and $v$, so by (d$\hspace{2pt}'$)
they are both nonzero.  Since $\hS$ and $\hT$ are both surfaces with at most one boundary
component, this implies that $\hy_S$ and $\hy_T$ are both nonseparating simple closed curves.
This implies that $x \cup y$ separates $\Sigma_g^b$ into two components, as desired.
\end{proof}

\subsection{The subgroups \texorpdfstring{$\Rel_g^b$}{Ngb} and \texorpdfstring{$\KRel_g^b$}{KRgb}}

Let $\Rel_g^b$ be the subgroup of $\Torelli_g^b$ generated by squares of separating twists,
squares of BP maps, basic commutators, and tri-separating pants maps.  Since each of these
families of elements is closed under conjugation by $\Mod_g^b$, the group $\Rel_g^b$ is
a normal subgroup of $\Mod_g^b$ and hence of $\Torelli_g^b$.  For $g \geq 3$, Theorem~\ref{maintheorem:genker} asserts
that $\Rel_g^b$ is the kernel of the BCJ homomorphism.

Let $\KRel_g^b$ be the subgroup of $\Torelli_g^b$ generated by squares of separating twists,
basic commutators, and tri-separating pants maps.
Each of these families of generators lies in the Johnson kernel $\cK_g^b$; indeed, this
is immediate for squares of separating twists and tri-separating pants maps, and
for basic commutators $[T_x,T_y T_z^{-1}] = T_x (T_y T_z^{-1}) T_x^{-1} (T_y T_z^{-1})^{-1}$
it follows since $T_x \in \cK_g^b$ and $\cK_g^b$ is a normal subgroup of $\Mod_g^b$.
The group $\KRel_g^b$ is also normal in $\Mod_g^b$ and hence in
$\Torelli_g^b$ and $\cK_g^b$.  For $g \geq 3$, Theorem~\ref{maintheorem:gencomm} asserts
that $\KRel_g^b = [\Torelli_g^b,\Torelli_g^b]$.

Just like in our notation $\Mod_g^b$, we will omit the $b$ from $\Rel_g^b$ and $\KRel_g^b$ if $b=0$.
We close this section with the following observation:

\begin{lemma}
\label{lemma:capboundary}
Fix $g \geq 2$.  Let $\phi\colon \Torelli_g^1 \rightarrow \Torelli_g$ be the homomorphism
that glues a disk to $\partial \Sigma_g^1$ and extends mapping classes over that disk
by the identity.  Then $\phi(\Rel_g^1) = \Rel_g$ and $\phi(\KRel_g^1) = \KRel_g$.
\end{lemma}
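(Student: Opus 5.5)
The plan is to prove the two inclusions $\phi(\Rel_g^1) \subseteq \Rel_g$ and $\Rel_g \subseteq \phi(\Rel_g^1)$, and then check that the same argument handles $\KRel$. Throughout, let $D$ be the glued-on disk and $p \in D$ its center, so that $\Sigma_g^1 = \Sigma_g \setminus \Int(D)$.

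\emph{Forward inclusion.} It suffices to check that $\phi$ sends each generator of $\Rel_g^1$ into $\Rel_g$. Let $x$ be a curve in $\Sigma_g^1$ and $\hat{x}$ its image in $\Sigma_g$. Then $\phi(T_x) = T_{\hat{x}}$, where this is trivial if $\hat{x}$ is inessential.
\begin{itemize}
\item A separating twist maps either to a separating twist or to the identity. The latter happens exactly when $x$ is isotopic to $\partial \Sigma_g^1$, since a separating curve cutting off a genus-$0$ piece in $\Sigma_g^1$ must be boundary parallel.
\item A BP map $T_yT_z^{-1}$ maps either to a BP map or to the identity. The identity occurs when $y$ and $z$ cobound an annulus containing $D$, and otherwise $\hat y,\hat z$ still form a bounding pair.
\end{itemize}
Hence squares of separating twists and squares of BP maps go to squares of the same type, or to $1$.

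For a tri-separating pants map $T_xT_yT_z$, the images $\hat x,\hat y,\hat z$ still bound a pair of pants. If all three remain essential, the image is again a tri-separating pants map. If one of them, say $\hat x$, becomes inessential, then $\hat y$ and $\hat z$ are isotopic and the image is $T_{\hat y}^2$, which is the square of a separating twist. Two of them cannot become inessential at once, since $C_y$ and $C_z$ have positive genus.

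For a basic commutator $[T_x,T_yT_z^{-1}]$, I will use the criterion of Lemma~\ref{lemma:basiccommutators}, because the homological condition (d$'$) is insensitive to capping. Capping identifies $\HH_1(\Sigma_g^1;\bbF_2) = \HH_1(\Sigma_g;\bbF_2)$, and it carries the symplectic splitting associated to $x$ to the one associated to $\hat x$. Geometric intersection numbers cannot increase, so (c$'$) is preserved. The remaining work is in the degenerate cases. If $\hat x$ is inessential, then (d$'$) would fail, because one summand of the splitting is $0$; so this cannot occur. Likewise, if $\hat y$ and $\hat z$ become isotopic, the image is trivial, which lies in $\Rel_g$ anyway. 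In all remaining cases the image is a basic commutator by the criterion.

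\emph{Reverse inclusion.} Every generator of $\Rel_g$ is supported on a configuration of curves, meaning one, two, or three curves, or $x \cup y \cup z$ with $x$ meeting $y$ and $z$. Such a configuration has a complement that is a nonempty open subset of $\Sigma_g$. After an isotopy, I can arrange that the configuration misses $p$, and then that it misses a small disk $D$ around $p$. The configuration then lifts to $\Sigma_g^1$.

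The lift has to be a generator of the same type. Disjointness, the pants condition, and the bounding-pair condition all survive removing a disk. Essentiality and nonisotopy survive as well, since the configuration of curves is unchanged away from $D$. For basic commutators I again use Lemma~\ref{lemma:basiccommutators}. The homology classes and the splitting are unchanged, and intersection numbers in $\Sigma_g^1$ can be computed from a minimal-position representative that avoids $D$; so (c$'$) is at most $2$ and (d$'$) still holds. Because $\phi$ is surjective on these generators and $\phi(\Rel_g^1)$ is a subgroup, we get $\Rel_g \subseteq \phi(\Rel_g^1)$.

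\emph{The $\KRel$ case and the main obstacle.} For $\KRel$ the argument is identical, since it never uses the squares of BP maps except as generators of their own type. In the degenerate tri-separating pants case the image $T_{\hat y}^2$ is still allowed in $\KRel_g$. I expect the main obstacle to be the degenerate bookkeeping for the forward inclusion, especially checking that the intersection bounds in (c$'$) are preserved under capping. I plan to handle this with the observation that $\igeom$ in $\Sigma_g$ is at most $\igeom$ in $\Sigma_g^1$.
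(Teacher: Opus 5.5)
Your proposal is correct and follows the paper's proof. The forward inclusion is checked generator by generator under capping: the degenerate tri-separating pants case gives $T_{\hat y}^2$, and basic commutators are handled via Lemma~\ref{lemma:basiccommutators}, using that capping preserves $\bbF_2$-homology and can only decrease $\igeom$. The reverse inclusion comes from isotoping each generator of $\Rel_g$ (resp.\ $\KRel_g$) off the glued-on disk.

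One small correction is needed in the reverse inclusion for a tri-separating pants map. There you must place the disk in one of the complementary pieces $C_x$, $C_y$, $C_z$, not inside the pair of pants $P$. If the disk lies in $P$, then $x \cup y \cup z$ bounds a four-holed sphere in $\Sigma_g^1$, and the lift $T_x T_y T_z$ has BCJ image $\kappa \neq 0$, so it is not even in $\Rel_g^1$. Your assertion that the pants condition survives removing a disk is true only for the correct placement.
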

\begin{proof}
Since the curves that make up the generators for $\Rel_g$ (resp.\ $\KRel_g$) can be isotoped to be disjoint from
the glued-on disk, we have $\Rel_g \subset \phi(\Rel_g^1)$ (resp.\ $\KRel_g \subset \phi(\KRel_g^1)$).  We must
prove the other inclusion.  We do this by studying the images of each family of generators:
\begin{itemize}
\item For a separating twist $T_x \in \Torelli_g^1$, the image $\phi(T_x^2)$ is the square of a separating twist unless
$x$ is isotopic to $\partial \Sigma_g^1$, in which case $\phi(T_x^2) = 1$.  
\item For a BP map $T_y T_z^{-1} \in \Torelli_g^1$, the image $\phi((T_y T_z^{-1})^2)$ is the square of a BP map unless
$y$ and $z$ become isotopic when a disk is glued to $\partial \Sigma_g^1$, in which case
$\phi((T_y T_z^{-1})^2) = 1$.
\item For a tri-separating pants map $T_x T_y T_z \in \Torelli_g^1$, its image $\phi(T_x T_y T_z)$ is
a tri-separating pants map unless one of $\{x,y,z\}$ is isotopic to $\partial \Sigma_g^1$.  Assume
now that one of $\{x,y,z\}$ is isotopic to $\partial \Sigma_g^1$.  Reordering the twists, we can
assume that $z$ is isotopic to $\partial \Sigma_g^1$.  It follows that $x$ and $y$ become isotopic 
when a disk is glued to $\partial \Sigma_g^1$.  Letting $w$ be the separating curve in
$\Sigma_g$ such that $\phi(T_x) = \phi(T_y) = T_w$, we then have $\phi(T_x T_y T_z) = T_w^2$, the
square of a separating twist.
\item For a basic commutator $[T_x, T_y T_z^{-1}] \in \Torelli_g^1$, if $y$ and $z$ become isotopic
when a disk is glued to $\partial \Sigma_g^1$ then $\phi([T_x,T_y T_z^{-1}]) = 1$.  Otherwise, 
Lemma~\ref{lemma:basiccommutators} implies that $\phi([T_x,T_y T_z^{-1}])$ is a
basic commutator.  The point here is that gluing a disk to $\partial \Sigma_g^1$ does
not change the first homology group and can
only cause geometric intersection numbers of curves to decrease.
\end{itemize}
The lemma follows from the above calculations.
\end{proof}

\section{Quotients and a reduction}
\label{section:quotients}

Fix $g \geq 2$ and $b \in \{0,1\}$.
Using the notation from the previous section, for $g \geq 3$
Theorem~\ref{maintheorem:genker} says that $\Rel_g^b$ is the kernel of the BCJ homomorphism
and Theorem~\ref{maintheorem:gencomm} says that $\KRel_g^b = [\Torelli_g^b,\Torelli_g^b]$.  
Define $\QTorelli_g^b = \Torelli_g^b/\Rel_g^b$ and $\QK_g^b = \cK_g^b/\KRel_g^b$.
In this section, we reduce Theorems~\ref{maintheorem:genker} and \ref{maintheorem:gencomm}
to a result about $\QK_g^1$.

\subsection{Johnson homomorphisms and the Johnson kernel}
\label{section:johnsonhomoandker}

We start by recalling some facts from the introduction.
Let $H_{\Z} = \HH_1(\Sigma_g^b) \cong \Z^{2g}$.  Embed $H_{\Z}$ into $\wedge^3 H_{\Z}$
via the map $h \mapsto h \wedge \omega$, where
$\omega \in \wedge^2 H_{\Z} \cong \wedge^2 H_{\Z}^{\ast}$ is the symplectic form.
Johnson (\cite{JohnsonHomo}; see \cite[\S 6.6]{FarbMargalitPrimer} for a textbook reference)
constructed the Johnson homomorphisms, which are surjective homomorphisms of the form
\[\tau \colon \Torelli_g^1 \rightarrow \wedge^3 H_{\Z} \quad \text{and} \quad 
\tau\colon \Torelli_g \rightarrow (\wedge^3 H_{\Z})/H_{\Z}.\]
The Johnson kernel $\cK_g^b$ is the kernel of the Johnson homomorphism on
$\Torelli_g^b$.  We therefore have exact sequences
\[\begin{tikzcd}[column sep=small, row sep=0pt]
1 \arrow{r} & \cK_g^1 \arrow{r} & \Torelli_g^1 \arrow{r}{\tau} & \wedge^3 H_{\Z} \arrow{r} & 1, \\
1 \arrow{r} & \cK_g   \arrow{r} & \Torelli_g   \arrow{r}{\tau} & (\wedge^3 H_{\Z})/H_{\Z}   \arrow{r} & 1.
\end{tikzcd}\]
For $g \geq 2$, Johnson \cite{Johnson2} proved that $\cK_g^b$ is the subgroup of $\Torelli_g^b$ generated
by separating twists.

\subsection{Quotienting the exact sequences}

The following lemma connects the groups $\QK_g^b$ and $\QTorelli_g^b$ to the
exact sequences involving $\cK_g^b$ and $\Torelli_g^b$ discussed in \S \ref{section:johnsonhomoandker}.
In it, we emphasize that we are not asserting that the
maps $\QK_g^1 \rightarrow \QTorelli_g^1$ and $\QK_g \rightarrow \QTorelli_g$
on the second rows of these diagrams are injective.

\begin{lemma}
\label{lemma:naturequotient}
Fix $g \geq 2$.  Set $H_{\Z} = \HH_1(\Sigma_g^1) = \HH_1(\Sigma_g)$ and
$H = \HH_1(\Sigma_g^1;\bbF_2) = \HH_1(\Sigma_g;\bbF_2)$.  We then
have commutative diagrams with exact rows
\[\begin{tikzcd}[column sep=tiny]
1 \arrow{r}
  & \cK_g^1 \arrow[two heads]{d} \arrow{r}
  & \Torelli_g^1 \arrow[two heads]{d} \arrow{r}{\tau}
  & \wedge^3 H_{\Z} \arrow[two heads]{d}{\substack{\text{reduce}\\ \text{mod }2}}
      \arrow{r}
  & 1 \\
  & \QK_g^1 \arrow{r}
  & \QTorelli_g^1 \arrow{r}
  & \wedge^3 H \arrow{r}
  & 1
\end{tikzcd}
\enspace \text{and} \enspace
\begin{tikzcd}[column sep=tiny]
1 \arrow{r}
  & \cK_g \arrow[two heads]{d} \arrow{r}
  & \Torelli_g \arrow[two heads]{d} \arrow{r}{\tau}
  & (\wedge^3 H_{\Z})/H_{\Z} \arrow[two heads]{d}{\substack{\text{reduce}\\ \text{mod }2}}
      \arrow{r}
  & 1 \\
  & \QK_g \arrow{r}
  & \QTorelli_g \arrow{r}
  & (\wedge^3 H)/H \arrow{r}
  & 1
\end{tikzcd}\]
\end{lemma}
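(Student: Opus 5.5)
Let $\rho$ denote the composite of $\tau$ with reduction mod $2$, i.e.\ $\Torelli_g^1 \rightarrow \wedge^3 H_{\Z} \rightarrow \wedge^3 H$ (and similarly $\Torelli_g \rightarrow (\wedge^3 H_{\Z})/H_{\Z} \rightarrow (\wedge^3 H)/H$). Note that the image of $H_{\Z}$ under reduction is the image of $H$ under $h \mapsto h \wedge \omega$. The plan is to show that $\rho$ kills $\Rel_g^b$ and that $\rho^{-1}(0)$ is generated by $\cK_g^b$ together with $\Rel_g^b$. The two diagrams are handled identically, so I describe the case $b=1$.

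\textbf{The bottom row is defined and the diagram commutes.} We check that $\rho(\Rel_g^1)=0$ on each family of generators.
\begin{itemize}
\item Squares of separating twists, tri-separating pants maps and basic commutators all lie in $\cK_g^1$. This is shown in \S\ref{section:simplebasic}, using Johnson's theorem that $\cK_g^1$ is generated by separating twists. Hence $\tau$ already kills them.
\item For a square of a BP map, $\tau((T_yT_z^{-1})^2) = 2\tau(T_yT_z^{-1})$, which vanishes mod $2$.
\end{itemize}
So $\rho$ descends to a map $\QTorelli_g^1 \rightarrow \wedge^3 H$. The image of $\cK_g^1 \subset \Rel_g^1\cK_g^1$ maps into $\QTorelli_g^1$, which gives $\QK_g^1 \rightarrow \QTorelli_g^1$. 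This is well defined because $\KRel_g^1 \subset \Rel_g^1$. Commutativity of all squares is then immediate from the definitions.

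\textbf{Surjectivity of $\QTorelli_g^1 \rightarrow \wedge^3 H$.} This follows because both $\tau$ and reduction mod $2$ are surjective.

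\textbf{The composite $\QK_g^1 \rightarrow \wedge^3 H$ is zero.} This holds since $\tau(\cK_g^1)=0$.

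\textbf{Exactness at $\QTorelli_g^1$.} This is the only real step. Take $f \in \Torelli_g^1$ with $\tau(f) \in 2\wedge^3 H_{\Z}$, say $\tau(f) = 2w$. By surjectivity of $\tau$, choose $f' \in \Torelli_g^1$ with $\tau(f') = w$. Then $\tau(f'^{-2}f)=0$, so $f = f'^2 k$ with $k \in \cK_g^1$.

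It remains to see that $f'^2 \in \Rel_g^1$. Writing $f'$ as a product of BP maps and separating twists (Powell), the footnote identity
\[[g_1,g_2] = g_1^2(g_1^{-1}g_2)^2 g_2^{-2}\]
does not help directly. Instead I argue modulo $\Rel_g^1 \cK_g^1$, which is normal. There $\cK_g^1$ is killed, so the quotient $\Torelli_g^1/(\Rel_g^1\cK_g^1)$ is a quotient of $\wedge^3 H_{\Z}$, hence abelian. In that abelian quotient each BP generator squares to the identity, because squares of BP maps lie in $\Rel_g^1$. Since Johnson showed $\Torelli_g^1$ is generated by BP maps for $g\ge 3$ (or Powell's generators, whose separating twists already lie in $\cK_g^1$), the quotient is an elementary abelian $2$-group. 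Therefore $f'^2 \in \Rel_g^1\cK_g^1$, and hence $f \in \Rel_g^1\cK_g^1$. This means its class in $\QTorelli_g^1$ lies in the image of $\QK_g^1$, which is the desired exactness.

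The main point requiring care is this last abelianness/exponent-$2$ argument. The closed case is verbatim, with Johnson's kernel generation valid for $g \ge 2$ and the target replaced by $(\wedge^3 H_{\Z})/H_{\Z}$ reducing onto $(\wedge^3 H)/H$.
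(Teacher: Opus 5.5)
Your proposal is correct and is essentially the paper's argument. The paper also reduces everything to $\tau(\Rel_g^b) = 2A_g^b$ (where $A_g^b$ is the target of $\tau$): the other generators lie in $\cK_g^b$, and $\tau$ takes BP maps to a generating set of $A_g^b$. Your exponent-$2$ argument for $\Torelli_g^b/(\Rel_g^b\cK_g^b)$, using Powell's generators and the surjectivity of $\tau$, is a repackaging of that same fact.
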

\begin{proof}
Fix $b \in \{0,1\}$.  Let $A_g^b$ be the free abelian group 
that is the target of the Johnson homomorphism $\tau$ on $\Torelli_g^b$, so
we have a short exact sequence
\[1 \longrightarrow \cK_g^b \longrightarrow \Torelli_g^b \stackrel{\tau}{\longrightarrow} A_g^b \rightarrow 1.\]
Quotienting by $\KRel_g^b$ and $\Rel_g^b$, we get a commutative diagram of exact sequences
\[\begin{tikzcd}[column sep=tiny, row sep=small]
1 \arrow{r}
  & \cK_g^b \arrow[two heads]{d} \arrow{r}
  & \Torelli_g^b \arrow[two heads]{d} \arrow{r}{\tau}
  & A_g^b \arrow[two heads]{d} \arrow{r}
  & 1 \\
  & \QK_g^b  \arrow{r}
  & \QTorelli_g^b \arrow{r}
  & A_g^b/\tau(\Rel_g^b) \arrow{r}
  & 1. 
\end{tikzcd}\]
It is not clear at this point in the paper if the map
$\QK_g^b \rightarrow \QTorelli_g^b$ is injective, which would require
proving that $\Rel_g^b \cap \cK_g^b = \KRel_g^b$.  Right now we only
know that $\KRel_g^b \subset \Rel_g^b \cap \cK_g^b$.  For $g \geq 3$, we remark that
it will follow from results we prove later in the paper that 
the map $\QK_g^b \rightarrow \QTorelli_g^b$ is in fact injective.
To prove the lemma, we must prove that $\tau(\Rel_g^b) = 2 A_g^b$.

Since $\cK_g^b$ is generated by separating twists it follows that $\cK_g^b$
contains all squares of separating twists and all tri-separating pants maps.  Also, since $A_g^b$ is abelian
and $\cK_g^b = \ker(\tau)$ it follows that $\cK_g^b$ contains all basic commutators.
We deduce that $\tau(\Rel_g^b)$ is the subgroup generated by the image under $\tau$ of the set of squares of
BP maps.  Johnson's calculations in \cite{JohnsonHomo} show that $\tau$ takes the set
of BP maps to a generating set for $A_g^b$, so $\tau$ takes the set of
squares of BP maps to a generating set for $2 A_g^b$.  The lemma follows.
\end{proof}

\subsection{Induced BCJ homomorphisms}

Recall that the BCJ homomorphisms are of the form $\sigma\colon \Torelli_g^1 \rightarrow \BCJ_3(g)$
and $\sigma\colon \Torelli_g \rightarrow \BCJZ_3(g)$.  Since the target of $\sigma$ is an elementary
abelian $2$-group, its kernel contains squares of separating twists, squares of BP maps, and basic commutators.
Moreover, Lemma~\ref{lemma:seppantsker} says that $\ker(\sigma)$ contains tri-separating pants maps.
It follows that the BCJ homomorphisms factor through maps
$\sigma\colon \QTorelli_g^1 \rightarrow \BCJ_3(g)$ and $\sigma\colon \QTorelli_g \rightarrow \BCJZ_3(g)$
that we will call the {\em induced BCJ homomorphisms}.  

Next, recall that Johnson \cite{JohnsonBCJ} proved that $\sigma(\cK_g^1) = \BCJ_2(g)$ and
$\sigma(\cK_g) = \BCJZ_2(g)$.  Just like above, the BCJ homomorphisms on $\cK_g^1$ and
$\cK_g$ factor through maps
$\sigma\colon \QK_g^1 \rightarrow \BCJ_2(g)$ and $\sigma\colon \QK_g \rightarrow \BCJZ_2(g)$
that we will also call the {\em induced BCJ homomorphisms}.  

All of these are related to the exact
sequences in Lemma~\ref{lemma:naturequotient} as follows:

\begin{lemma}
\label{lemma:inducedbcj}
Fix $g \geq 2$.  Set $H = \HH_1(\Sigma_g^1;\bbF_2) = \HH_1(\Sigma_g;\bbF_2)$.  We then
have commutative diagrams with exact rows
\[\begin{tikzcd}[column sep=tiny, row sep=small]
& \QK_g^1 \arrow{r} \arrow{d}{\sigma} & \QTorelli_g^1 \arrow{r} \arrow{d}{\sigma} & \wedge^3 H \arrow{r} \arrow{d}{\cong} & 1 \\
0 \arrow{r} & \BCJ_2(g) \arrow{r}                & \BCJ_3(g) \arrow{r}                      & \BCJ_3(g)/\BCJ_2(g) \arrow{r}             & 0
\end{tikzcd}
\enspace \text{and} \enspace
\begin{tikzcd}[column sep=tiny, row sep=small]
& \QK_g  \arrow{r} \arrow{d}{\sigma} & \QTorelli_g \arrow{r} \arrow{d}{\sigma} & (\wedge^3 H)/H \arrow{r} \arrow{d}{\cong} & 1 \\
0 \arrow{r} & \BCJZ_2(g) \arrow{r}              & \BCJZ_3(g) \arrow{r}                   & \BCJZ_3(g)/\BCJZ_2(g) \arrow{r}               & 0.
\end{tikzcd}\]
In both diagrams, the first two vertical arrows are the induced BCJ homomorphisms.
\end{lemma}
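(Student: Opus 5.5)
The top rows are exactly the bottom rows of Lemma~\ref{lemma:naturequotient}, and the bottom rows are the obvious short exact sequences coming from the inclusions $\BCJ_2(g) \subset \BCJ_3(g)$ and $\BCJZ_2(g) \subset \BCJZ_3(g)$. So the proof consists of three steps: define the right-hand vertical isomorphisms, and check commutativity of both squares.

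\textbf{Left square.} This commutes essentially by definition. The induced BCJ homomorphism on $\QK_g^b$ is the restriction of $\sigma$ to $\cK_g^b$. That restriction lands in $\BCJ_2(g)$ (resp.\ $\BCJZ_2(g)$) by Johnson's theorem $\sigma(\cK_g^1) = \BCJ_2(g)$, or directly from the formula in \S\ref{subsection:septwistsbcj}, since $\cK_g^b$ is generated by separating twists. Because $\QK_g^b \to \QTorelli_g^b$ is induced by the inclusion $\cK_g^b \subset \Torelli_g^b$, the square commutes.

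\textbf{Right-hand isomorphism.} First we identify $\BCJ_3(g)/\BCJ_2(g)$ with $\wedge^3 H$. The map sends $a \wedge b \wedge c$ to the class of $\oa\,\ob\,\oc$. We check it is well defined:
\begin{itemize}
\item It is multilinear, because $\overline{h_1+h_2} = \oh_1 + \oh_2 + \hiota(h_1,h_2)$, and the extra terms have degree $\le 2$.
\item It is alternating, because $\oa\,\oa\,\ob = \oa\,\ob$ has degree $2$.
\end{itemize}
It is an isomorphism by counting. With respect to a symplectic basis, the cubic square-free monomials form a basis of the quotient, and these correspond to the standard basis of $\wedge^3 H$.

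For the closed case, the subspace $H \subset \wedge^3 H$ spanned by $h \wedge \omega$ maps to $\overline{h}\,\kappa$ modulo degree $2$, where $\kappa = \sum \oa_i \ob_i$. On $\Omega_g(0)$ we have $\kappa = \arf = 0$, so $\overline{h}\,\kappa$ dies in $\BCJZ_3(g)$. This gives a surjection $(\wedge^3 H)/H \to \BCJZ_3(g)/\BCJZ_2(g)$. To see it is an isomorphism we compare dimensions. The introduction states $\dim \BCJZ_3(g) = (4g^3-g)/3$. Lemma~\ref{lemma:bcj2difference} gives $\dim \BCJZ_2(g) = \binom{2g}{2}+2g$, so
\[\dim \BCJZ_3(g)/\BCJZ_2(g) = \binom{2g}{3}-2g = \dim\,(\wedge^3 H)/H.\]
If one prefers not to rely on the dimension count from the introduction, one can instead use Johnson's result quoted there, that $\sigma(\cK_g)=\BCJZ_2(g)$ and $\sigma(\Torelli_g)=\BCJZ_3(g)$.

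\textbf{Right square.} This is the main obstacle: we must show that $\sigma(f)$ modulo $\BCJ_2(g)$ equals the mod-$2$ reduction of $\tau(f)$ under this identification. Both composites $\Torelli_g^b \to \BCJ_3/\BCJ_2$ are homomorphisms, so it suffices to check them on generators. By Lemma~\ref{lemma:naturequotient} the relevant maps factor through $\Torelli_g^b$, so we may work there. Separating twists go to $0$ under both composites: $\tau$ kills them, and $\sigma$ of a separating twist is quadratic. By Johnson's theorem that BP maps (together with separating twists) generate, it remains to treat BP maps.

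For a BP map $T_yT_z^{-1}$ cobounding a subsurface with symplectic basis $\{x_1,y_1,\ldots,x_k,y_k\}$, with $c=[y]$, Johnson's formula gives
\[\tau = \Big(\sum x_i\wedge y_i\Big)\wedge c.\]
Johnson's BCJ formula for BP maps has cubic term $\big(\sum \ox_i\oy_i\big)\,\oc$. Quoting Johnson \cite{JohnsonBCJ} for this cubic term, which the paper explicitly mentions exists, the two agree modulo degree $2$. The closed case then follows from the boundary case via Lemma~\ref{lemma:capboundary} and the naturality square in \S\ref{section:bcjhomomorphism}.

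If quoting the BP formula is undesirable, an alternative is an equivariance argument. Both composites are $\Mod_g^b$-equivariant and surjective onto $\wedge^3 H$, and $\wedge^3 H$ modulo the image of $H$ has few equivariant endomorphisms. One would then check the agreement on a single BP map. This route looks harder, so I would cite Johnson's formula instead.
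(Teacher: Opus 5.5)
Your proposal is correct and follows the same route as the paper, whose proof simply combines Lemma~\ref{lemma:naturequotient} with Johnson's commutative diagrams from \S\ref{section:torelliabelianization}; you unpack Johnson's diagram by identifying $\BCJ_3(g)/\BCJ_2(g)$ with $\wedge^3 H$ explicitly and checking the right square on separating twists and BP maps via Johnson's formulas. One small fix: the square in \S\ref{section:bcjhomomorphism} concerns $\Torelli_g^1 \hookrightarrow \Torelli_{g+1}$ rather than capping, so for the closed case it is cleaner to run the same generator check directly on $\Torelli_g$, using the closed-surface versions of Johnson's formulas.
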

\begin{proof}
Immediate from Lemma~\ref{lemma:naturequotient} and Johnson's work in \cite{JohnsonBCJ} (see the commutative diagrams
in \S \ref{section:torelliabelianization}).
\end{proof}

\subsection{Alternate main theorem}

Parts~\ref{part:2} and \ref{part:3} of this paper are devoted to proving the
following theorem.  Note that in its statement we require $g \geq 3$.

\begin{primedtheorem}{maintheorem:genker}
\label{maintheorem:qkg}
For $g \geq 3$, the induced BCJ homomorphism 
$\sigma\colon \QK_g^1 \rightarrow \BCJ_2(g)$ is an isomorphism.
\end{primedtheorem}

The rest of this section shows how to use Theorem~\ref{maintheorem:qkg} to prove Theorems~\ref{maintheorem:genker}
and \ref{maintheorem:gencomm}.

\subsection{Capping the boundary}

The first step is to prove that Theorem~\ref{maintheorem:qkg} implies a similar
theorem for closed surfaces:

\begin{lemma}
\label{lemma:qkgclosed}
Fix $g \geq 3$.  Assume that the induced BCJ homomorphism $\sigma\colon \QK_g^1 \rightarrow \BCJ_2(g)$
is an isomorphism.  Then the induced
BCJ homomorphism $\sigma\colon \QK_g \rightarrow \BCJZ_2(g)$ is also an isomorphism.
\end{lemma}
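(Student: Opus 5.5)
We will compare the two induced BCJ homomorphisms through the capping map $\phi\colon \Torelli_g^1 \rightarrow \Torelli_g$ from Lemma~\ref{lemma:capboundary}.

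\textbf{Step 1: the square.} The map $\phi$ takes separating twists to separating twists or the identity, so $\phi(\cK_g^1) \subset \cK_g$. By Johnson's theorem $\cK_g$ is generated by separating twists, and each of these lifts to a separating twist in $\Torelli_g^1$, so in fact $\phi(\cK_g^1) = \cK_g$. Lemma~\ref{lemma:capboundary} gives $\phi(\KRel_g^1) = \KRel_g$, so $\phi$ induces a surjection $\ophi\colon \QK_g^1 \rightarrow \QK_g$. On the other side we have the restriction map $r\colon \BCJ_2(g) \rightarrow \BCJZ_2(g)$, which is surjective by definition. We will check that
\[\sigma \circ \ophi = r \circ \sigma\colon \QK_g^1 \longrightarrow \BCJZ_2(g).\]
It is enough to verify this on separating twists $T_x$. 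For such $T_x$, let $S$ be the subsurface of $\Sigma_g^1$ cut off by $x$ that avoids the boundary. The formula in \S\ref{subsection:septwistsbcj} computes $\sigma(T_x)$ as the Arf invariant of the restriction of the form to $\HH_1(S;\bbF_2)$. The same subsurface $S$ serves as one of the two sides of $\phi(T_x)$ in $\Sigma_g$, so both composites give the same polynomial, read in $\BCJZ_2(g)$. If $x$ is isotopic to $\partial\Sigma_g^1$, then $\sigma(T_x) = \kappa$ and $\phi(T_x) = 1$. This is consistent because $r(\kappa) = 0$ by Lemma~\ref{lemma:bcj2difference}, since the Arf invariant vanishes on $\Omega_g(0)$.

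\textbf{Step 2: injectivity.} Surjectivity of $\sigma\colon \QK_g \rightarrow \BCJZ_2(g)$ is Johnson's result $\sigma(\cK_g) = \BCJZ_2(g)$. For injectivity, let $\xi \in \QK_g$ with $\sigma(\xi) = 0$, and pick a lift $\tilde\xi \in \QK_g^1$ using surjectivity of $\ophi$. Then $r(\sigma(\tilde\xi)) = 0$, so by Lemma~\ref{lemma:bcj2difference} we have $\sigma(\tilde\xi) \in \{0, \kappa\}$. Now $\kappa = \sigma(T_\partial)$, where $T_\partial$ is the twist about a curve parallel to $\partial\Sigma_g^1$; this is a separating twist lying in $\cK_g^1$. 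Since $\sigma\colon \QK_g^1 \rightarrow \BCJ_2(g)$ is an isomorphism by hypothesis, we get $\tilde\xi \in \{1, [T_\partial]\}$. Because $\ophi([T_\partial]) = 1$, in either case $\xi = \ophi(\tilde\xi) = 1$.

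\textbf{Main obstacle.} The delicate point is Step 1: making sure $\phi(\cK_g^1) = \cK_g$ and that the BCJ formulas are compatible under capping. The capped twist has two possible sides, and the boundary-parallel twist must be handled separately. This is where Lemma~\ref{lemma:bcj2difference} and the independence of the choice of side in \S\ref{subsection:septwistsbcj} are used.
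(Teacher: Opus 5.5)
Your proof is correct and takes essentially the same route as the paper. The paper also caps the boundary to get a surjection $\QK_g^1 \to \QK_g$ via Lemma~\ref{lemma:capboundary}, compares it with the surjection $\BCJ_2(g) \to \BCJZ_2(g)$, and uses Lemma~\ref{lemma:bcj2difference} together with $\sigma(T_\partial) = \kappa$ and $T_\partial \mapsto 1$ to kill the extra kernel. You differ only in spelling out the commutativity of the square and the surjectivity of $\cK_g^1 \to \cK_g$, and in citing Johnson for surjectivity of the closed map, which also follows directly from the square.
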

\begin{proof}
Let $c\colon \cK_g^1 \rightarrow \cK_g$ be the surjective homomorphism that glues a disk
to $\partial \Sigma_g^1$ and extends mapping classes over it by the identity.  
Lemma~\ref{lemma:capboundary} says that $c(\KRel_g^1) = \KRel_g$.  It follows that
the map $c$ induces a surjective map $\QK_g^1 \rightarrow \QK_g$.
This fits into a commutative diagram
\[\begin{tikzcd}
\QK_g^1 \arrow{r}{\sigma}[swap]{\cong} \arrow[two heads]{d} & \BCJ_2(g) \arrow[two heads]{d} \\
\QK_g   \arrow{r}{\sigma}                                   & \BCJZ_2(g).
\end{tikzcd}\]
To prove the lemma, we must therefore prove that the elements of $\QK_g^1$ corresponding
to elements of the kernel of the projection $\BCJ_2(g) \twoheadrightarrow \BCJZ_2(g)$ lie in the kernel 
of the projection $\QK_g^1 \twoheadrightarrow \QK_g$.  Fix a symplectic basis
$\{a_1,b_1,\ldots,a_g,b_g\}$ for $H = \HH_1(\Sigma_g^1;\bbF_2)$.  Lemma~\ref{lemma:bcj2difference}
says that the kernel
of the projection $\BCJ_2(g) \twoheadrightarrow \BCJZ_2(g)$ is isomorphic to $\bbF_2$
and is generated by the element
\[\kappa = \oa_1 \ob_1 + \cdots + \oa_g \ob_g.\]
Letting $x = \partial \Sigma_g^1$, the formulas in \S \ref{subsection:septwistsbcj} imply
that $\sigma(T_x) = \kappa$.  The lemma therefore follows from the fact that $T_x$ is in
the kernel of the map $\cK_g^1 \rightarrow \cK_g$.
\end{proof} 

\subsection{The proof of Theorem~\ref{maintheorem:genker}}
\label{section:theoremaproof}

We next show how to use Theorem~\ref{maintheorem:qkg} to prove Theorem~\ref{maintheorem:genker}, whose
statement we recall:

\newtheorem*{maintheorem:genker}{Theorem~\ref{maintheorem:genker}}
\begin{maintheorem:genker}
For $g \geq 3$ and $b \in \{0,1\}$, the kernel of the BCJ homomorphism
on $\Torelli_g^b$ is generated by squares of separating twists, squares of BP maps, basic
commutators, and tri-separating pants maps.
\end{maintheorem:genker}
\begin{proof}[Proof, assuming Theorem~\ref{maintheorem:qkg}]
It is enough to prove that the induced BCJ homomorphisms
$\sigma\colon \QTorelli_g^1 \rightarrow \BCJ_3(g)$ and $\sigma\colon \QTorelli_g \rightarrow \BCJZ_3(g)$ are isomorphisms.
Letting $H = \HH_1(\Sigma_g^1;\bbF_2)$, Lemma~\ref{lemma:inducedbcj} gives a commutative diagram with exact rows
\[\begin{tikzcd}[column sep=small]
& \QK_g^1 \arrow{r} \arrow{d}{\sigma} & \QTorelli_g^1 \arrow{r} \arrow{d}{\sigma} & \wedge^3 H \arrow{r} \arrow{d}{\cong} & 1 \\
0 \arrow{r} & \BCJ_2(g) \arrow{r}                & \BCJ_3(g) \arrow{r}                      & \BCJ_3(g)/\BCJ_2(g) \arrow{r}             & 0.
\end{tikzcd}\]
Theorem~\ref{maintheorem:qkg} implies that $\sigma\colon \QK_g^1 \rightarrow \BCJ_2(g)$ is an isomorphism, which
implies that the map $\QK_g^1 \rightarrow \QTorelli_g^1$ in the above diagram is injective.
By the five lemma, we conclude that $\sigma\colon \QTorelli_g^1 \rightarrow \BCJ_3(g)$ is an isomorphism, as
desired.  The proof that $\sigma\colon \QTorelli_g \rightarrow \BCJZ_3(g)$ is an isomorphism
is similar, using Lemma~\ref{lemma:qkgclosed} together with Theorem~\ref{maintheorem:qkg} to 
see that $\sigma\colon \QK_g \rightarrow \BCJZ_2(g)$ is an isomorphism.
\end{proof}

\subsection{The proof of Theorem~\ref{maintheorem:gencomm}}
\label{section:theorembproof}

We close this part of the paper by showing 
how to use Theorem~\ref{maintheorem:qkg} to prove Theorem~\ref{maintheorem:gencomm}, whose
statement we recall:

\newtheorem*{maintheorem:gencomm}{Theorem~\ref{maintheorem:gencomm}}
\begin{maintheorem:gencomm}
For $g \geq 3$ and $b \in \{0,1\}$, the group $[\Torelli_g^b,\Torelli_g^b]$
is generated by squares of separating twists, basic
commutators, and tri-separating pants maps.
\end{maintheorem:gencomm}
\begin{proof}[Proof, assuming Theorem~\ref{maintheorem:qkg}]
Johnson \cite{Johnson3} proved that $[\Torelli_g^b,\Torelli_g^b]$ is the kernel 
of the restriction of the BCJ homomorphism to $\cK_g^b$.  In the introduction,
we described an alternate proof of this using Theorem~\ref{maintheorem:genker}.
It follows that to prove the theorem, it is enough to prove that the indicated
generating set generates the kernel of the restriction of the BCJ homomorphism
to $\cK_g^b$.  The indicated generating set generates $\KRel_g^b$, and
Theorem~\ref{maintheorem:qkg} together with Lemma~\ref{lemma:qkgclosed} imply
that the BCJ homomorphism on $\cK_g^b$ induces an isomorphism from $\QK_g^b = \cK_g^b/\KRel_g^b$
to its target.  The theorem follows.
\end{proof}

\part{The nature of the quotient}
\label{part:2}

It remains to prove Theorem~\ref{maintheorem:qkg}.  This part of the paper
studies properties of $\QK_g^1$ and reduces Theorem~\ref{maintheorem:qkg}
to Theorem~\ref{maintheorem:qkgpres}, which is a description of
$\BCJ_2(g)$ by generators and relations.
We start in \S \ref{section:qkabelian}
by proving that $\QK_g^1$ is an elementary abelian $2$-group equipped with an action of
$\Sp_{2g}(\Z)$.  We then prove in \S \ref{section:qkmod2} that the action of
$\Sp_{2g}(\Z)$ on $\QK_g^1$ factors through $\Sp_{2g}(\bbF_2)$.  Using this,
we reduce Theorem~\ref{maintheorem:qkg} to Theorem~\ref{maintheorem:qkgpres} in
\S \ref{section:qkgenrel}.

\begin{remark}
Everything we prove for the rest of the paper concerns surfaces with one boundary
component, so for instance we will henceforth talk about $\QK_g^1$ but not $\QK_g$.
\end{remark}

\begin{genusassumption}
Throughout Part~\ref{part:2}, we will assume that the genus $g$ satisfies $g \geq 3$.
\end{genusassumption}

\section{The group \texorpdfstring{$\QK_g^1$}{QKg1} is an elementary abelian 2-group}
\label{section:qkabelian}

Fix $g \geq 3$.  In this section, we prove that $\QK_g^1$ is an elementary abelian $2$-group.

\subsection{Generation by genus-1 separating twists}

This requires two lemmas.
A {\em genus-$h$ separating twist} in $\Mod_g^1$ is a separating twist $T_x$ such that
$x$ separates $\Sigma_g^1$ into subsurfaces $S \cong \Sigma_h^1$ and $S' \cong \Sigma_{g-h}^2$:
\Figure{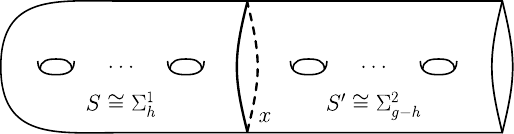}
For $\lambda \in \cK_g^1$, let $[\lambda]_Q$ be its image in $\QK_g^1$.  Our first
lemma is as follows:

\begin{lemma}
\label{lemma:qkgen}
For $g \geq 3$, the group $\QK_g^1$ is generated by the set of all $[T_x]_Q$ such
that $T_x$ is a genus-$1$ separating twist in $\Mod_g^1$.
\end{lemma}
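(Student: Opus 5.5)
By Johnson's theorem (recalled in \S\ref{section:johnsonhomoandker}), $\cK_g^1$ is generated by separating twists. So $\QK_g^1$ is generated by the classes $[T_x]_Q$ with $x$ separating, and it is enough to show that each such class lies in the subgroup generated by classes of genus-$1$ separating twists. The tool is the tri-separating pants relation. Suppose $x \cup y \cup z$ bounds a pair of pants. Then $T_xT_yT_z \in \KRel_g^1$, so $[T_x]_Q = [T_z]_Q^{-1}[T_y]_Q^{-1}$, using that $T_x,T_y,T_z$ commute. The plan is an induction on the genus $h$ of the subsurface cut off by $x$.

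\textbf{Setup.} Let $T_x$ be a genus-$h$ separating twist. Let $S \cong \Sigma_h^1$ be the side of $x$ not containing $\partial \Sigma_g^1$, and assume $x$ is not isotopic to $\partial\Sigma_g^1$. The case $x \simeq \partial \Sigma_g^1$ has $h = g$ and is covered by the same argument, since $S$ is then just the whole surface with the boundary curve.
\begin{itemize}
\item If $h = 1$ there is nothing to prove.
\item If $h \geq 2$, choose a separating curve $y$ inside $S$ cutting off a genus-$1$ subsurface $C_y$, and a second separating curve $z$ inside $S$, disjoint from $y$, cutting off a genus-$(h-1)$ subsurface $C_z$. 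Choose them so that $S \setminus (C_y \cup C_z)$ is a pair of pants $P$ with $\partial P = x \sqcup y \sqcup z$. This is possible because $S$ decomposes as $P \cup C_y \cup C_z$ with genera $1$ and $h-1$, and both are at least $1$.
\end{itemize}
Then $T_xT_yT_z$ is a tri-separating pants map, so $[T_x]_Q = [T_y]_Q^{-1}[T_z]_Q^{-1}$. Here $T_y$ is a genus-$1$ separating twist and $T_z$ is a genus-$(h-1)$ separating twist. By induction $[T_z]_Q$ lies in the subgroup generated by genus-$1$ classes, and hence so does $[T_x]_Q$.

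The only points needing care are the following.
\begin{itemize}
\item In $\Sigma_g^1$ the genus of a separating twist is measured on the side away from the boundary. This is what makes $C_y$ and $C_z$ have the stated genera and makes $y$ and $z$ nontrivial.
\item Inverses cause no trouble, since we only need generation as a group.
\end{itemize}
I expect no real obstacle: the statement is a direct consequence of Johnson's generation theorem together with the tri-separating pants relations. The condition $g\ge 3$ is not really used beyond the standing assumptions.
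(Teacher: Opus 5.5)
Your proposal is correct and is essentially the paper's proof: you use Johnson's theorem that $\cK_g^1$ is generated by separating twists, then induct on genus using a tri-separating pants map $T_xT_yT_z$ with $T_y$ a genus-$1$ separating twist and $T_z$ a genus-$(h-1)$ separating twist. The only cosmetic difference is that the paper uses the squares of separating twists in $\KRel_g^1$ to remove the inverses, which, as you note, is unnecessary for generation as a group.
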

\begin{proof}
Johnson \cite{Johnson2} proved that $\cK_g^1$ is generated by separating twists.  Letting
$T_x$ be a genus-$h$ separating twist with $h \geq 2$, it is enough to prove that $[T_x]_Q$ can be written
as a product of terms of the form $[T_{x'}]_Q$ with $T_{x'}$ a genus-$1$ separating twist.  
We can find a tri-separating pants map $T_x T_y T_z$ such that
$T_y$ is a genus-$1$ separating twist and $T_z$ is a genus-$(h-1)$ separating twist.  See the figure
here:
\Figure{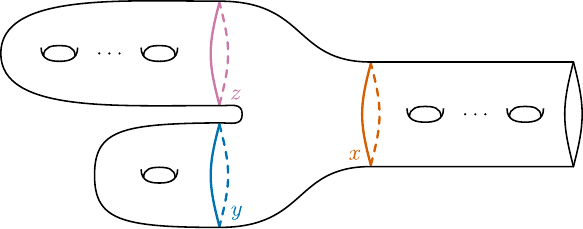}
We have $\QK_g^1 = \cK_g^1/\KRel_g^1$ and $\KRel_g^1$ contains all tri-separating pants maps.
It follows that $[T_x T_y T_z]_Q = 1$, so since $\KRel_g^1$ also contains all squares of separating twists
we have
\[[T_x]_Q = [T_z]_Q^{-1} [T_y]_Q^{-1} = [T_z]_Q [T_y]_Q.\]
The mapping class $T_y$ is a genus-$1$ separating twist, and
by induction $[T_z]_Q$ can be written as a product of terms of the form
$[T_{z'}]_Q$ with $T_{z'}$ a genus-$1$ separating twist.  The
lemma follows.
\end{proof}

\subsection{Trivial Torelli action}

The conjugation action of $\Mod_g^1$ on its normal subgroup $\cK_g^1$
descends to an action of $\Mod_g^1$ on $\QK_g^1$.  We next prove
that the restriction of this action to $\Torelli_g^1$ is trivial:

\begin{lemma}
\label{lemma:torellitrivial}
For $g \geq 3$, the action of $\Torelli_g^1$ on $\QK_g^1$ is trivial.
\end{lemma}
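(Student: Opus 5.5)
By Lemma~\ref{lemma:qkgen}, $\QK_g^1$ is generated by the classes $[T_x]_Q$ with $T_x$ a genus-$1$ separating twist. By Powell's theorem (or Johnson's), $\Torelli_g^1$ is generated by BP maps; for $g \geq 3$ we may even restrict to BP maps of genus $1$, or at least to some conjugation-invariant family of BP maps. It therefore suffices to show that for every genus-$1$ separating twist $T_x$ and every BP map $f = T_y T_z^{-1}$ we have $[f T_x f^{-1}]_Q = [T_x]_Q$. Equivalently, we need the commutator $[f,T_x]$, or $[T_x,f]$, to lie in $\KRel_g^1$. Since $T_x$ is a generator and $f$ ranges over a generating set of $\Torelli_g^1$, this gives the full statement: once each generator of $\Torelli_g^1$ fixes each generator of $\QK_g^1$, the whole group acts trivially.

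Fix such $x$ and $f$, and let $S \cong \Sigma_1^1$ be the genus-$1$ side of $x$. First consider the case where $y$ and $z$ can be chosen disjoint from $x$. Then $f$ commutes with $T_x$ and there is nothing to prove. Otherwise I would reduce to basic commutators. The $\bbF_2$-homology class $h$ of $f$ decomposes as $u+v$ according to the splitting $H=U\oplus V$ of $x$. If $u\neq 0$ and $v\neq 0$ and $\igeom(x,y),\igeom(x,z)\leq 2$, Lemma~\ref{lemma:basiccommutators} says $[T_x,f]$ is a basic commutator, hence lies in $\KRel_g^1$. The general case needs a factorization step. I would write $f$ as a product of BP maps $f_1\cdots f_k$, each of which either commutes with $T_x$ or forms a basic commutator with it. The identity $[T_x,f_1f_2]=[T_x,f_1]\,f_1[T_x,f_2]f_1^{-1}$, together with normality of $\KRel_g^1$ in $\Mod_g^1$, then finishes.

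The main obstacle is this factorization. It is cleaner to flip the roles: fix $f$ and move $x$. Because $\KRel_g^1$ is normal in $\Mod_g^1$, any genus-$1$ separating curve can be replaced by a $\Mod_g^1$-translate, so we may normalize $f$ to a standard BP map bounding a fixed subsurface. The genus-$1$ separating twists then range over all such curves. Using a chain of tri-separating pants relations and squares, as in the proof of Lemma~\ref{lemma:qkgen}, we can express $[T_x]_Q$ as a product of classes $[T_{x'}]_Q$. Each $x'$ either is disjoint from $y\cup z$, or meets them in the basic-commutator configuration: $\igeom=2$ with both homology components nonzero. The flexibility of the relation $[T_x]_Q=[T_{x'}]_Q[T_{x''}]_Q$ should let us choose a pants decomposition of the genus-$2$ or genus-$3$ region around $x\cup y\cup z$ adapted to $f$. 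I expect to need the hypothesis $g\geq 3$ exactly here, to have room for such an adapted decomposition when $u$ or $v$ vanishes.

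If the pants juggling is awkward, the fallback is to use Johnson's finite generating set of genus-$1$ BP maps. By $\Mod_g^1$-equivariance, only finitely many relative configurations of $x$ and a fixed genus-$1$ BP map need checking. Each configuration can then be handled by an explicit picture in which $f T_x f^{-1}=T_{f(x)}$, and $[T_{f(x)}]_Q[T_x]_Q$ is shown to be a basic commutator or trivial.
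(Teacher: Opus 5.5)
Your reductions are sound. It suffices that some generating set of $\Torelli_g^1$ fixes each generator $[T_x]_Q$. BP maps disjoint from $x$ commute with $T_x$, and BP maps in the position of Lemma~\ref{lemma:basiccommutators} give $[T_x,f]\in\KRel_g^1$. But the whole content of the lemma is the step you call the main obstacle: that BP maps of these two kinds generate $\Torelli_g^1$. The proposal never establishes this.

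The ``pants juggling'' paragraph is a hope, not an argument, and it runs in the hard direction. Once you fix $f$ and let $x$ range over all genus-$1$ separating curves, there are infinitely many configurations, since $\igeom(x,y)$ is unbounded. Nothing shows that tri-separating pants relations can rewrite $[T_x]_Q$ in terms of curves in good position relative to $y\cup z$.

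Your fallback has the same defect: ``a fixed genus-$1$ BP map'' with $x$ varying is not a finite check. Its last step also assumes what must be shown. Suppose a generator meets $x$ in four points, or has $u=0$ or $v=0$. There is then no evident way to recognize $T_{f(x)}T_x^{-1}$ as a basic commutator. The check only succeeds if every generator is already in one of the two good positions relative to $x$, and nothing in your proposal arranges this.

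The missing idea is to fix $x$ rather than $f$, and to ask only for generation, not for a factorization of every BP map. Since $\Torelli_g^1$ is normal in $\Mod_g^1$ and $\Mod_g^1$ acts transitively on genus-$1$ separating curves, it suffices to treat a single, conveniently placed $x$. The stabilizer $\Gamma$ of $[T_x]_Q$ in $\Torelli_g^1$ is a subgroup containing every BP map disjoint from $x$ and every BP map forming a basic commutator with $T_x$. The paper then observes that, with $x$ drawn in a suitable standard position, every element of Johnson's finite generating set for $\Torelli_g^1$ is of one of these two types. Hence $\Gamma=\Torelli_g^1$. This positional fact about Johnson's generators (not their genus) is the input your argument needs and does not supply.
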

\begin{proof}
Let $T_x$ be a genus-$1$ separating twist.  By Lemma~\ref{lemma:qkgen} it
is enough to prove that $\Torelli_g^1$ acts trivially on $[T_x]_Q$.  Let
$\Gamma < \Torelli_g^1$ be the stabilizer of $[T_x]_Q$, i.e., the subgroup
of all $f \in \Torelli_g^1$ such that $[f T_x f^{-1}]_Q = [T_x]_Q$.  Our goal is to prove
that $\Gamma = \Torelli_g^1$.  To do this, we will prove that $\Gamma$ contains
a generating set for $\Torelli_g^1$.

We first enumerate some elements of $\Gamma$:
\begin{itemize}
\item If $f \in \Torelli_g^1$ satisfies $f(x) = x$, then 
\[[f T_x f^{-1}]_Q = [T_{f(x)}]_Q = [T_x]_Q\]
and thus $f \in \Gamma$.  In particular, $\Gamma$ contains all
BP maps $T_y T_z^{-1}$ such that $y$ and $z$ are both disjoint
from $x$.
\item Now assume that $T_y T_{z}^{-1}$ is a BP map such that
$[T_x,T_y T_z^{-1}]$ is a basic commutator.  
We have $\QK_g^1 = \cK_g^1/\KRel_g^1$, where $\KRel_g^1$ contains all
basic commutators.  It follows that
\[[T_x (T_y T_z^{-1}) T_x^{-1} (T_y T_z^{-1})^{-1}]_Q = 1,\]
so $[(T_y T_z^{-1}) T_x (T_y T_z^{-1})^{-1}]_Q = [T_x]_Q$.  We deduce that $T_y T_{z}^{-1} \in \Gamma$.
\end{itemize}
Summarizing, the subgroup $\Gamma$ of $\Torelli_g^1$ contains the set $S$ of all 
BP maps $T_y T_z^{-1}$ such that either $y \cup z$ is disjoint from $x$ or
$[T_x,T_y T_z^{-1}]$ is a basic commutator.  It follows from work 
of Johnson \cite{Johnson1} that $S$ generates $\Torelli_g^1$, so $\Gamma = \Torelli_g^1$.  In fact,
what Johnson proved is as follows.  Draw $x$ as follows:
\Figure{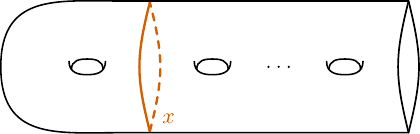}
Johnson \cite{Johnson1} constructed a finite generating set $S'$ for $\Torelli_g^1$ consisting
of BP maps $T_y T_z^{-1}$ that intersect $x$ as described above, so $S' \subset S$.  The lemma follows.
\end{proof}

\subsection{Abelian group}

We now use the above results to prove:

\begin{proposition}
\label{proposition:qkabelian}
Fix $g \geq 3$.  The following hold:
\begin{itemize}
\item The group $\QK_g^1$ is an elementary abelian $2$-group.
\item The action of $\Mod_g^1$ on $\QK_g^1$ factors through an action
of $\Sp_{2g}(\Z)$.
\end{itemize}
\end{proposition}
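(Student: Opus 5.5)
The plan is to deduce both assertions from Lemmas~\ref{lemma:qkgen} and~\ref{lemma:torellitrivial}, together with the fact that $\KRel_g^1$ contains all squares of separating twists.

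\textbf{Abelian.} First I will show that $\QK_g^1$ is abelian. Since $\cK_g^1 \subset \Torelli_g^1$, Lemma~\ref{lemma:torellitrivial} says in particular that conjugation by any element of $\cK_g^1$ acts trivially on $\QK_g^1$. Concretely, for $\lambda,\mu \in \cK_g^1$ we have
\[[\lambda \mu \lambda^{-1}]_Q = [\mu]_Q.\]
But the left side equals $[\lambda]_Q[\mu]_Q[\lambda]_Q^{-1}$ in the group $\QK_g^1$. Hence $[\lambda]_Q$ commutes with $[\mu]_Q$, and since $\cK_g^1 \to \QK_g^1$ is surjective, $\QK_g^1$ is abelian.

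\textbf{Exponent $2$.} By Lemma~\ref{lemma:qkgen}, $\QK_g^1$ is generated by the classes $[T_x]_Q$ of genus-$1$ separating twists. Each such class satisfies $[T_x]_Q^2 = [T_x^2]_Q = 1$, because $T_x^2 \in \KRel_g^1$. An abelian group generated by elements of order dividing $2$ is an elementary abelian $2$-group. This gives the first bullet.

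\textbf{Factoring through $\Sp_{2g}(\Z)$.} The conjugation action of $\Mod_g^1$ on $\cK_g^1$ preserves the normal subgroup $\KRel_g^1$, since each family of generators of $\KRel_g^1$ is conjugation-invariant. It therefore descends to an action of $\Mod_g^1$ on $\QK_g^1$. By Lemma~\ref{lemma:torellitrivial}, the kernel of this action contains $\Torelli_g^1$. Since $\Mod_g^1/\Torelli_g^1 \cong \Sp_{2g}(\Z)$, the action factors through $\Sp_{2g}(\Z)$. This gives the second bullet.

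\textbf{Expected obstacle.} There is essentially no obstacle, since all the real work was done in Lemmas~\ref{lemma:qkgen} and~\ref{lemma:torellitrivial}. The only point requiring care is the observation that inner conjugation by $\cK_g^1$ is a special case of the Torelli action, which is what turns Lemma~\ref{lemma:torellitrivial} into commutativity.
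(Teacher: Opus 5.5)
Your proposal is correct and follows essentially the same route as the paper. The paper likewise gets the factoring through $\Sp_{2g}(\Z)$ and commutativity from Lemma~\ref{lemma:torellitrivial}, using that $\cK_g^1 \subset \Torelli_g^1$ acts on $\QK_g^1$ by inner conjugation. It then gets exponent $2$ from Lemma~\ref{lemma:qkgen} together with the squares of separating twists lying in $\KRel_g^1$.
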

\begin{proof}
That the action of $\Mod_g^1$ on $\QK_g^1$ factors through an action of
$\Mod_g^1 / \Torelli_g^1 = \Sp_{2g}(\Z)$
is immediate from Lemma~\ref{lemma:torellitrivial}.  That lemma also 
implies that the subgroup $\cK_g^1$ of $\Torelli_g^1$ acts trivially on $\QK_g^1$.  For $a,b \in \QK_g^1$,
this implies that $a b a^{-1} = b$, so $ab = ba$.  We conclude that $\QK_g^1$
is abelian.  We will henceforth write elements of it using additive notation.

Lemma~\ref{lemma:qkgen} says that $\QK_g^1$ is generated
by elements of the form $[T_x]_Q$, where $T_x$ is a genus-$1$ separating twist.
We have $\QK_g^1 = \cK_g^1/\KRel_g^1$, where $\KRel_g^1$ contains all
squares of separating twists.  For a separating
twist $T_x$, this implies that $2 [T_x]_Q=0$.  We conclude that 
$\QK_g^1$ is an elementary abelian $2$-group, as desired. 
\end{proof}

\section{The group \texorpdfstring{$\QK_g^1$}{QKg1} is a representation of \texorpdfstring{$\Sp_{2g}(\bbF_2)$}{Sp(2g,Z/2)}}
\label{section:qkmod2}

Fix $g \geq 3$.
Our next goal is to prove that the action of $\Sp_{2g}(\Z)$ on $\QK_g^1$ factors through $\Sp_{2g}(\bbF_2)$.
As notation, let $H_{\Z} = \HH_1(\Sigma_g^1)$ and let $\omega(-,-)$ be the $\Z$-valued algebraic intersection
form on $H_{\Z}$.  We call it $\omega$ to distinguish it from $\hiota(-,-)$, which is the
$\bbF_2$-valued algebraic intersection form on $H = \HH_1(\Sigma_g^1;\bbF_2)$.

\subsection{Symplectic summands and separating twists}

A {\em symplectic summand} of $H_{\Z}$ is a direct summand $V_{\Z}$ of $H_{\Z}$ such that the restriction
of $\omega$ to $V_{\Z}$ is symplectic, i.e., such that $\omega$ identifies $V_{\Z}$ with its dual
$V_{\Z}^{\ast} = \Hom(V_{\Z},\Z)$.  Equivalently, letting $\perp$ be the orthogonal complement with
respect to $\omega$ we have $H_{\Z} = V_{\Z} \oplus V_{\Z}^{\perp}$.  We remark that if $V_{\Z}$ is a subgroup
of $H_{\Z}$ such that the restriction of $\omega$ to $V_{\Z}$ is symplectic, then $V_{\Z}$ is automatically
a direct summand and $H_{\Z} = V_{\Z} \oplus V_{\Z}^{\perp}$. 

One way these arise is as follows.
Let $T_x$ be a genus-$h$ separating twist in $\Torelli_g^1$.  The curve $x$ separates $\Sigma_g^1$ into
a subsurface $S$ with $S \cong \Sigma_h^1$ and a subsurface $S'$ with $S' \cong \Sigma_{g-h}^2$.  It
is immediate that $\HH_1(S)$ is a symplectic summand of $H_{\Z}$; indeed, letting $B < \HH_1(S')$ be
the subgroup spanned by the boundary components, we have a decomposition
$H_{\Z} = \HH_1(S) \oplus (\HH_1(S')/B)$
that is orthogonal with respect to $\omega$.  The reason we
quotient by $B$ here is that the map $\HH_1(S') \rightarrow \HH_1(\Sigma_g^1)$ is not injective.
We will call $\HH_1(S)$ the symplectic summand
{\em associated} to $T_x$.

\subsection{Symplectic summand generators}

Let $V_{\Z}$ be a nonzero symplectic summand of $H_{\Z}$.  Johnson \cite{JohnsonConj} proved that there
is a separating twist $T_x$ such that $V_{\Z}$ is the symplectic summand associated to $T_x$.  Johnson \cite{JohnsonConj}
also proved that if $T_{x'}$ is another separating twist such that $V_{\Z}$ is the symplectic summand associated
to $T_{x'}$, then there exists some $f \in \Torelli_g^1$ such that $f T_x f^{-1}= T_{x'}$.  Since
$\Torelli_g^1$ acts trivially on $\QK_g^1$ (see Lemma~\ref{lemma:torellitrivial}), we have
\[[T_x]_Q = [f T_x f^{-1}]_Q = [T_{f(x)}]_Q = [T_{x'}]_Q.\]
It follows that the element $[T_x]_Q \in \QK_g^1$ only depends on $V_{\Z}$.  We will denote it
by $[V_{\Z}]_Q$.  We have $V_{\Z} \cong \Z^{2h}$ for some $h \geq 1$, and we call $h$ the {\em genus}
of $V_{\Z}$.  The action of $\Sp_{2g}(\Z)$ on $\QK_g^1$ given by Proposition~\ref{proposition:qkabelian} 
is compatible with this notation: for $M \in \Sp_{2g}(\Z)$, we have $M \cdot [V_{\Z}]_Q = [M(V_{\Z})]_Q$. 

\begin{remark}
In the above argument, we allow $V_{\Z} = H_{\Z}$.  Indeed, letting $\partial$ be the
boundary component of $\Sigma_g^1$ we have $[H_{\Z}]_Q = [T_{\partial}]_Q$.
\end{remark}

The following is immediate from Lemma~\ref{lemma:qkgen} and the above discussion:

\begin{lemma}
\label{lemma:qkgenz}
For $g \geq 3$, the group $\QK_g^1$ is generated by the set of all $[V_{\Z}]_Q$ such
that $V_{\Z}$ is a genus-$1$ symplectic summand of $H_{\Z} = \HH_1(\Sigma_g^1)$.
\end{lemma}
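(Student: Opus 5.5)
The plan is to combine Lemma~\ref{lemma:qkgen} with the well-definedness of the notation $[V_{\Z}]_Q$ established just above. By Lemma~\ref{lemma:qkgen}, $\QK_g^1$ is generated by the classes $[T_x]_Q$ with $T_x$ a genus-$1$ separating twist. So it is enough to show that each such class equals $[V_{\Z}]_Q$ for some genus-$1$ symplectic summand $V_{\Z}$.

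Let $T_x$ be a genus-$1$ separating twist, with $x$ cutting off a subsurface $S \cong \Sigma_1^1$. As observed in the discussion of symplectic summands, $V_{\Z} = \HH_1(S)$ is a symplectic summand of $H_{\Z}$, and it has rank $2$, i.e.\ genus $1$. This is the summand associated to $T_x$. By Johnson's result in \cite{JohnsonConj}, together with the triviality of the $\Torelli_g^1$-action from Lemma~\ref{lemma:torellitrivial}, the class $[T_x]_Q$ depends only on $V_{\Z}$, and by definition it equals $[V_{\Z}]_Q$.

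Hence every generator from Lemma~\ref{lemma:qkgen} has the form $[V_{\Z}]_Q$ with $V_{\Z}$ a genus-$1$ symplectic summand. Conversely, every such $[V_{\Z}]_Q$ is, by construction, $[T_x]_Q$ for some separating twist $T_x$ whose associated summand is $V_{\Z}$; the genus of $T_x$ equals the rank of $V_{\Z}$ divided by $2$, so $T_x$ is a genus-$1$ separating twist. The two generating sets therefore coincide.

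There is no real obstacle. The only point to check is that the genus of a separating twist matches the genus of its associated summand, which holds because $\HH_1(\Sigma_h^1) \cong \Z^{2h}$.
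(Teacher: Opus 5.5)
Your proposal is correct and matches the paper, which says the lemma is immediate from Lemma~\ref{lemma:qkgen} together with the discussion showing that $[T_x]_Q$ depends only on the associated summand (via Johnson's conjugacy result and Lemma~\ref{lemma:torellitrivial}), so that $[T_x]_Q = [V_{\Z}]_Q$. Your ``conversely'' paragraph is not needed for generation, since it suffices that every generator from Lemma~\ref{lemma:qkgen} has the form $[V_{\Z}]_Q$, but it is harmless.
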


\subsection{Relations between symplectic summand generators}
Two symplectic summands $V_{\Z},W_{\Z} < H_{\Z}$
are {\em orthogonal} if $\omega(v,w) = 0$ for all $v \in V_{\Z}$ and $w \in W_{\Z}$.  This implies that
$V_{\Z} \oplus W_{\Z}$ is a symplectic summand of $H_{\Z}$.  
The elements $[V_{\Z}]_Q$ and $[W_{\Z}]_Q$ of $\QK_g^1$ satisfy the following relation:

\begin{lemma}
\label{lemma:orthogonalrelation}
Fix $g \geq 3$.  Set $H_{\Z} = \HH_1(\Sigma_g^1)$, and let $V_{\Z},W_{\Z} < H_{\Z}$ be nonzero orthogonal
symplectic summands.  We then have $[V_{\Z}]_Q+[W_{\Z}]_Q = [V_{\Z} \oplus W_{\Z}]_Q$.
\end{lemma}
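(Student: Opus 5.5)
The plan is to realize the relation by a tri-separating pants map. Write $U_{\Z} = V_{\Z} \oplus W_{\Z}$; since $V_{\Z}$ and $W_{\Z}$ are orthogonal symplectic summands, $U_{\Z}$ is a symplectic summand. We look for disjoint separating curves $x,y,z$ in $\Sigma_g^1$ bounding a pair of pants $P$, arranged as in the proof of Lemma~\ref{lemma:seppantsker}. Namely, $x$ cuts off a subsurface $S \cong \Sigma_{h_1+h_2}^1$ containing $P$, and $y$ and $z$ cut off subsurfaces $C_y \cong \Sigma_{h_1}^1$ and $C_z \cong \Sigma_{h_2}^1$ of $S$. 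We want $\HH_1(C_y) = V_{\Z}$ and $\HH_1(C_z) = W_{\Z}$, where $h_1,h_2$ are the genera of $V_{\Z}$ and $W_{\Z}$. Then $\HH_1(S) = \HH_1(C_y) \oplus \HH_1(C_z) = V_{\Z} \oplus W_{\Z}$, so the symplectic summands associated to $T_x,T_y,T_z$ are $U_{\Z}, V_{\Z}, W_{\Z}$. Since $T_x T_y T_z \in \KRel_g^1$, and $\QK_g^1$ is an elementary abelian $2$-group (Proposition~\ref{proposition:qkabelian}), this gives $[U_{\Z}]_Q + [V_{\Z}]_Q + [W_{\Z}]_Q = 0$, which is the claim.

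The main step is constructing such curves, and I would do it by realizing bases. Choose a symplectic basis of $V_{\Z}$ and one of $W_{\Z}$. Their union is a symplectic basis of $U_{\Z}$, and it extends to a symplectic basis of $H_{\Z}$ because $H_{\Z} = U_{\Z} \oplus U_{\Z}^{\perp}$ and $U_{\Z}^{\perp}$ is unimodular symplectic, hence has a symplectic basis.

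Now take a standard model of $\Sigma_g^1$. In it, fix standard curves $y_0,z_0,x_0$ bounding a pants, with $C_{y_0}$ containing the first $h_1$ handles and $C_{z_0}$ the next $h_2$ handles. Fix a standard symplectic basis adapted to this decomposition. Since $\Mod_g^1 \to \Sp_{2g}(\Z)$ is surjective, there is an $f \in \Mod_g^1$ sending the standard basis to our chosen basis. Setting $x = f(x_0)$, $y = f(y_0)$ and $z = f(z_0)$ gives the required configuration, because $f$ carries $\HH_1(C_{y_0})$ to $V_{\Z}$, and similarly for the others.

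Since $[\,\cdot\,]_Q$ was shown above to depend only on the associated summand, no further identification is needed.

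One edge case needs care: $U_{\Z}$ may equal $H_{\Z}$, so that $x$ is parallel to $\partial\Sigma_g^1$. This is allowed by the earlier Remark: $T_x$ is then the boundary twist, and $x,y,z$ still bound a pants in $\Sigma_g^1$ with all three curves nontrivial. So $T_xT_yT_z$ is still a tri-separating pants map, and the argument goes through unchanged.

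The only real obstacle is the realization step, i.e.\ checking that the symplectic-basis extension and the surjectivity onto $\Sp_{2g}(\Z)$ give curves whose summands are exactly $V_{\Z}$ and $W_{\Z}$. This is routine once the standard model is fixed.
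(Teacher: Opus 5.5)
Your proposal is correct and follows the paper's proof. Like the paper, you realize $V_{\Z}$, $W_{\Z}$ and $V_{\Z}\oplus W_{\Z}$ as the summands associated to the three boundary curves of a tri-separating pants map, and then use that $\QK_g^1$ is an elementary abelian $2$-group. The only difference is that the paper cites Johnson's realization argument for the curves, while you spell it out by extending a symplectic basis and lifting through the surjection $\Mod_g^1 \to \Sp_{2g}(\Z)$, which is essentially that argument.
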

\begin{proof}
The same argument Johnson used in \cite{JohnsonConj} to prove that there is a separating twist
whose associated symplectic summand is a given symplectic summand shows more generally
that there exist disjoint simple closed separating curves $y$ and $z$ such that:
\begin{itemize}
\item $V_{\Z}$ is the symplectic summand associated to $T_y$; and 
\item $W_{\Z}$ is the symplectic summand associated to $T_z$.
\end{itemize}  
We can then find a simple closed separating curve $x$ that is disjoint from $y$ and $z$ such that
$x \cup y \cup z$ bounds a pair of pants.  See the following figure:
\Figure{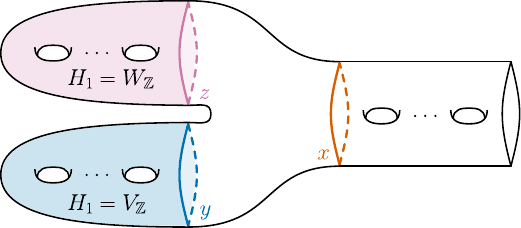}
As this figure shows, the symplectic summand associated to $T_x$ is $V_{\Z} \oplus W_{\Z}$.  
In $\QK_g^1 = \cK_g^1/\KRel_g^1$, the group $\KRel_g^1$ contains all tri-separating pants maps.
Since $T_x T_y T_z$ is a tri-separating pants map, it follows that
\[0 = [T_x T_y T_z]_Q = [T_x]_Q + [T_y]_Q + [T_z]_Q = [V_{\Z} \oplus W_{\Z}]_Q + [V_{\Z}]_Q + [W_{\Z}]_Q.\]
Since $\QK_g^1$ is an elementary abelian $2$-group, this implies that $[V_{\Z}]_Q+[W_{\Z}]_Q = [V_{\Z} \oplus W_{\Z}]_Q$,
as desired.
\end{proof}

\subsection{Completing symplectic bases}

Before proving our final result in this section, we prove
a technical lemma:

\begin{lemma}
\label{lemma:completespbasis}
Fix $g \geq 3$.  Set $H_{\Z} = \HH_1(\Sigma_g^1)$, and let $\omega(-,-)$ be the
algebraic intersection pairing on $H_{\Z}$.  Let $V_{\Z}$ be a genus-$h$ symplectic summand
of $H_{\Z}$.  For some $k \geq 0$, let $a_1,\ldots,a_k \in V_{\Z}$ be elements whose
span $\Span{a_1,\ldots,a_k}$ is a rank-$k$ direct summand of $V_{\Z}$ and which
satisfy $\omega(a_i,a_j)=0$ for all $1 \leq i,j \leq k$.  We can then complete
$\{a_1,\ldots,a_k\}$ to a symplectic basis $\{a_1,b_1,\ldots,a_h,b_h\}$ for $V_{\Z}$.
\end{lemma}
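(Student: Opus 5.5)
We argue by induction on $k$, building the basis one hyperbolic pair at a time and splitting off that pair as a symplectic summand at each stage. The case $k=0$ is the standard fact that a unimodular alternating form over $\Z$ on $V_{\Z}$ admits a symplectic basis. We use throughout that the restriction of $\omega$ to $V_{\Z}$ is unimodular, which is the definition of a symplectic summand: $\omega$ identifies $V_{\Z}$ with $V_{\Z}^{\ast}$.

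For $k \geq 1$, first find a dual element $b_1 \in V_{\Z}$ with
\[\omega(a_1,b_1)=1 \quad \text{and} \quad \omega(a_i,b_1)=0 \text{ for } 2 \leq i \leq k.\]
Since $A=\Span{a_1,\ldots,a_k}$ is a direct summand of $V_{\Z}$ of rank $k$, the set $\{a_1,\ldots,a_k\}$ extends to a $\Z$-basis of $V_{\Z}$. Hence there is a homomorphism $\phi\colon V_{\Z} \rightarrow \Z$ with $\phi(a_1)=1$ and $\phi(a_i)=0$ for $i \geq 2$. Unimodularity gives $b_1 \in V_{\Z}$ with $\omega(-,b_1)=\phi$, which is exactly the required element.

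Next split off the pair $\{a_1,b_1\}$. The span $P=\Span{a_1,b_1}$ is symplectic, since $\omega$ restricted to $P$ has Gram matrix of determinant $1$. By the remark preceding the lemma (applied inside $V_{\Z}$, where $\omega$ is unimodular), we get
\[V_{\Z} = P \oplus P^{\perp},\]
where $P^{\perp}$ denotes the orthogonal complement of $P$ inside $V_{\Z}$, and $P^{\perp}$ is a genus-$(h-1)$ symplectic summand. Since $\omega(a_i,a_1)=0$ and $\omega(a_i,b_1)=0$ for $i \geq 2$, the elements $a_2,\ldots,a_k$ lie in $P^{\perp}$.

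We must check that $\Span{a_2,\ldots,a_k}$ is a rank-$(k-1)$ direct summand of $P^{\perp}$. It suffices to show that $P^{\perp}/\Span{a_2,\ldots,a_k}$ is torsion-free, and this is the step needing the most care. Suppose $m x \in \Span{a_2,\ldots,a_k}$ for some $x \in P^{\perp}$ and $m \neq 0$. Then $mx \in A$, and $A$ is a summand of $V_{\Z}$, so
\[x = \sum_{i=1}^{k} c_i a_i \quad \text{for some } c_i \in \Z.\]
Pairing with $b_1$ gives $0=\omega(x,b_1)=c_1$, because $x \in P^{\perp}$ and $\omega(a_i,b_1)=0$ for $i \geq 2$. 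Hence $x \in \Span{a_2,\ldots,a_k}$, as needed. The rank is $k-1$ because the $a_i$ are linearly independent.

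By induction, $\{a_2,\ldots,a_k\}$ completes to a symplectic basis $\{a_2,b_2,\ldots,a_h,b_h\}$ of $P^{\perp}$. Adjoining $a_1,b_1$ then gives the desired symplectic basis of $V_{\Z}$. The constraint $k \leq h$ is automatic, since an isotropic direct summand of a rank-$2h$ unimodular symplectic lattice has rank at most $h$; in any case it is forced along the induction.
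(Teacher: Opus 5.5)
Your proof is correct and follows essentially the same route as the paper. Both arguments induct on $k$, produce $b_1$ from a functional $\lambda$ with $\lambda(a_1)=1$ and $\lambda(a_i)=0$ for $i \geq 2$ via unimodularity, and then apply the inductive hypothesis inside the orthogonal complement of $\Span{a_1,b_1}$ in $V_{\Z}$. The only difference is that you check directly, via torsion-freeness of the quotient, that $\Span{a_2,\ldots,a_k}$ is a summand of that complement. The paper instead cites the general fact that a direct summand of a free abelian group lying in a subgroup is a summand of that subgroup.
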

\begin{proof}
The proof will be by induction on $k$.  The base case $k=0$ just asserts that
every symplectic summand has a symplectic basis, which is standard.  Assume now
that $k \geq 1$ and that the lemma is true whenever $k$ is smaller.  

Since $\Span{a_1,\ldots,a_k}$ is a rank-$k$ direct summand of $V_{\Z}$, there is
a homomorphism $\lambda\colon V_{\Z} \rightarrow \Z$ such that
$\lambda(a_1)=1$ and $\lambda(a_i)=0$ for $2 \leq i \leq k$.  Since the
restriction of $\omega$ to $V_{\Z}$ is symplectic, there exists $b_1 \in V_{\Z}$
such that $\omega(v,b_1) = \lambda(v)$ for all $v \in V_{\Z}$.  In particular,
$\omega(a_1,b_1) = 1$ and $\omega(a_i,b_1) = 0$ for $2 \leq i \leq k$.  

Let $V'_{\Z}$ be the orthogonal complement of $\Span{a_1,b_1}$ in $V_{\Z}$.  This
is a symplectic summand and $a_2,\ldots,a_k \in V'_{\Z}$.  In free abelian groups, direct
summands contained in a subgroup are direct summands of that subgroup, so $\Span{a_2,\ldots,a_k}$ is a direct
summand of $V'_{\Z}$.  By induction, we can
complete $\{a_2,\ldots,a_k\}$ to a symplectic basis $\{a_2,b_2,\ldots,a_h,b_h\}$
for $V'_{\Z}$.  It follows that $\{a_1,b_1,\ldots,a_h,b_h\}$ is a symplectic
basis for $V_{\Z}$.
\end{proof}

\subsection{Factoring through \texorpdfstring{$\boldsymbol{\Sp_{2g}(\bbF_2)}$}{Sp(2g,F2)}}

The following result is the main goal of this section.
The calculation underlying its proof first appeared
in van den Berg's thesis \cite{VanDenBergThesis};
see especially the proof of \cite[Lemma 3.5.3]{VanDenBergThesis}.

\begin{proposition}
\label{proposition:factor2}
Fix $g \geq 3$.  The action of $\Sp_{2g}(\Z)$ on $\QK_g^1$ factors
through $\Sp_{2g}(\bbF_2)$.
\end{proposition}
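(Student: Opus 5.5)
We must show that the level-2 congruence subgroup $\Gamma_{2g}(2) = \ker(\Sp_{2g}(\Z) \to \Sp_{2g}(\bbF_2))$ acts trivially on $\QK_g^1$. It is classical that $\Gamma_{2g}(2)$ is normally generated in $\Sp_{2g}(\Z)$ by the squares of transvections $T_v^2\colon h \mapsto h + 2\omega(h,v)v$, with $v$ primitive. (These are the images of squares of Dehn twists about nonseparating curves.) Since $\Gamma_{2g}(2)$ is normal, it suffices to check that each such $T_v^2$ acts trivially. By Lemma~\ref{lemma:qkgenz} it is then enough to prove that $T_v^2 \cdot [V_{\Z}]_Q = [V_{\Z}]_Q$ for every genus-$1$ symplectic summand $V_{\Z}$.

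\textbf{Case analysis on $v$.} If $v$ is orthogonal to $V_{\Z}$, then $T_v^2$ fixes $V_{\Z}$ pointwise and there is nothing to prove. In general, write $v = v_1 + v_2$ with $v_1 \in V_{\Z}$ and $v_2 \in V_{\Z}^{\perp}$. Let $\{a,b\}$ be a symplectic basis of $V_{\Z}$. Then $T_v^2(V_{\Z})$ is the genus-$1$ summand spanned by $a + 2\omega(a,v)v$ and $b + 2\omega(b,v)v$. So the task is to show that $[V_{\Z}]_Q = [V'_{\Z}]_Q$ whenever $V'_{\Z}$ is obtained from $V_{\Z}$ in this way.

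\textbf{Key computation.} The idea, following van den Berg, is to use Lemma~\ref{lemma:orthogonalrelation} repeatedly to express both $[V_{\Z}]_Q$ and $[V'_{\Z}]_Q$ as sums of classes of summands, and to compare them using only changes of basis. The main point is that for a genus-$2$ summand $W_{\Z} = \Span{a,b} \oplus \Span{c,d}$, the class $[W_{\Z}]_Q$ depends only on $W_{\Z}$. Hence for any other orthogonal splitting $W_{\Z} = A \oplus B$ into genus-$1$ summands we get $[\Span{a,b}]_Q + [\Span{c,d}]_Q = [A]_Q + [B]_Q$.

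From this we extract an ``$\bbF_2$-linearity'' relation. Choose a symplectic basis $\{a,b,c,d\}$ of a genus-$2$ summand containing $V_{\Z}$ and $v$; Lemma~\ref{lemma:completespbasis} lets us arrange this, using $g \geq 3$ to find room when $v_2 \neq 0$. Then compare the splittings
\[\Span{a,b} \oplus \Span{c,d} = \Span{a+2kc,\, b} \oplus \Span{c,\, d - 2kb}.\]
The goal is to show $[\Span{a+2kc,b}]_Q = [\Span{a,b}]_Q$, i.e., that the class of a genus-$1$ summand is unchanged when a basis vector is modified by an even multiple of an orthogonal vector. Iterating this relation, together with the analogous relation inside $\Span{a,b}$ itself (where $[V_{\Z}]_Q$ is trivially basis-independent), moves $V_{\Z}$ to $T_v^2(V_{\Z})$ in a sequence of steps.

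\textbf{Main obstacle.} The difficulty is proving the relation $[\Span{a+2kc,b}]_Q = [\Span{a,b}]_Q$ itself. Lemma~\ref{lemma:orthogonalrelation} gives $[\Span{a+2kc,b}]_Q + [\Span{c,d-2kb}]_Q = [\Span{a,b}]_Q + [\Span{c,d}]_Q$, but that alone does not separate the two terms. The plan is to use a third orthogonal genus-$1$ summand $\Span{e,f}$, which exists because $g \geq 3$, and work inside a genus-$3$ summand. We would write the same genus-$3$ summand as an orthogonal sum of three genus-$1$ summands in several ways, related by elementary symplectic moves such as $a \mapsto a + c$ and $d \mapsto d - b$. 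This should produce enough linear relations in the $\bbF_2$-vector space $\QK_g^1$ (Proposition~\ref{proposition:qkabelian}) to cancel the unwanted terms, in the style of van den Berg's Lemma~3.5.3. The delicate part is choosing these moves so that the factors of $2$ combine with $2[\,\cdot\,]_Q = 0$ and actually cancel.
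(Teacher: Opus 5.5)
\textbf{Verdict: there is a genuine gap.} Your reduction matches the paper: the level-$2$ subgroup is generated by squares $\tau_v^2$ of primitive transvections, and it suffices to check $\tau_v^2$ on the genus-$1$ generators $[V_{\Z}]_Q$. The trouble is that the relation $[\Span{a+2kc,b}]_Q=[\Span{a,b}]_Q$, which you single out as the ``main obstacle,'' is not a reduction of the problem. It \emph{is} the problem.

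\begin{itemize}
\item In one direction, $\Span{a+2kc,b}=\tau_{b-kc}^2(\Span{a,b})$, up to sign conventions.
\item Conversely, write $v=na+w$ with $a\in V_{\Z}$ primitive, $w\in V_{\Z}^{\perp}$, and $\{a,b\}$ a symplectic basis of $V_{\Z}$. Then $\tau_v^2(V_{\Z})=\Span{a,\,b+2nw}$.
\end{itemize}
So your key relation is equivalent to the proposition for genus-$1$ summands. The proposal stops exactly where the content begins (``this should produce enough linear relations''). As you note, Lemma~\ref{lemma:orthogonalrelation} inside a genus-$2$ summand only gives a two-term identity that cannot separate the terms. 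You also do not exhibit the genus-$3$ moves that would separate them.

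\textbf{The missing idea} is to stop working with genus-$1$ summands at this point. Since $[V_{\Z}]_Q+[V_{\Z}^{\perp}]_Q=[H_{\Z}]_Q$ and $[H_{\Z}]_Q$ is fixed by all of $\Sp_{2g}(\Z)$, it suffices to treat summands of genus at least $2$. This is where $g\geq 3$ enters.

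In such a $V_{\Z}$, take a symplectic basis $\{a_1,b_1,a_2,b_2\}\cup I$ and $w\in V_{\Z}^{\perp}$. The transvection by $u=na_1+a_2+w$ carries $V_{\Z}$ onto the orthogonal sum $\Span{a_1,b_1+nw}\oplus\Span{a_2,b_2+w,I}$. So the perturbation by $w$ is spread over two independent orthogonal blocks. Comparing $u$ with $na_1+w$, $a_2+w$, and $0$ gives
\[[\tau_{na_1+a_2+w}(V_{\Z})]_Q+[\tau_{na_1+w}(V_{\Z})]_Q+[\tau_{a_2+w}(V_{\Z})]_Q+[V_{\Z}]_Q=0,\]
because each block occurs twice.

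Apply this relation to $2na_1+a_2+w$ directly, then again after rewriting it as $na_1+(na_1+a_2)+w$, and once more to expand $\tau_{na_1+a_2+w}$. This yields $[\tau_{2na_1+w}(V_{\Z})]_Q=[V_{\Z}]_Q$. Finally, $\tau_v^2(V_{\Z})=\tau_{2na_1+w}(V_{\Z})$ for $v=na_1+w$, which finishes the argument. Your sketch needs some additivity mechanism of this kind to become a proof.
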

\begin{proof}
Set $H_{\Z} = \HH_1(\Sigma_g^1)$, and let $\omega(-,-)$ be the algebraic intersection
pairing on $H_{\Z}$.  For $v \in H_{\Z}$, let $\tau_v \in \Sp_{2g}(\Z)$
be the associated symplectic transvection, so
\[\tau_v(h) = h + \omega(v,h) v \quad \text{for all $h \in H_{\Z}$}.\]
The group $\Sp_{2g}(\Z)$ is generated by the set of symplectic transvections
$\tau_v$ such that $v \in H_{\Z}$ is primitive, that is, not divisible by
any $d \geq 2$ (see, e.g., the discussion in \cite[\S 6.3.3]{FarbMargalitPrimer}).  
We will call these the {\em primitive transvections}.
We start with the following relation in $\QK_g^1$.  Recall that $\perp$ indicates
the orthogonal complement in $H_{\Z}$ with respect to $\omega$.

\begin{unnumberedclaim}
Let $V_{\Z}$ be a symplectic summand of $H_{\Z}$ of genus at least $2$.  Let $a_1,a_2 \in V_{\Z}$
be elements such that $\omega(a_1,a_2) = 0$ and such that their span
$\Span{a_1,a_2}$ is a rank-$2$ direct summand of $V_{\Z}$.  Also, let $w \in V_{\Z}^{\perp}$.
Then for all $n \in \Z$ we have
\[[\tau_{n a_1 + a_2 + w}(V_{\Z})]_Q + [\tau_{n a_1+w}(V_{\Z})]_Q + [\tau_{a_2+w}(V_{\Z})]_Q + [V_{\Z}]_Q = 0.\]
\end{unnumberedclaim}
\begin{proof}[Proof of claim]
Just like in the statement of the claim, for elements $x_1,\ldots,x_k \in H_{\Z}$ and 
subsets $J_1,\ldots,J_{\ell} \subset H_{\Z}$ we will
write $\Span{x_1,\ldots,x_k,J_1,\ldots,J_{\ell}}$ for the subgroup of $H_{\Z}$ spanned by the $x_i$ and the 
union of the $J_j$.
Using Lemma~\ref{lemma:completespbasis}, complete
the $a_i$ to a symplectic basis $\{a_1,b_1,\ldots,a_h,b_h\}$ for $V_{\Z}$.  Set
$I = \{a_3,b_3,\ldots,a_h,b_h\}$.  Using the relation from Lemma~\ref{lemma:orthogonalrelation},
\begin{align}
\label{claim:factor2.1}
[\tau_{n a_1+a_2 + w}(V_{\Z})]_Q 
                              &= [\Span{a_1,b_1+n(n a_1+a_2 + w),a_2,b_2+(n a_1+a_2 + w),I}]_Q \\
                              &= [\Span{a_1,b_1+nw,a_2,b_2+w,I}]_Q \notag\\
                              &= [\Span{a_1,b_1+nw}]_Q + [\Span{a_2,b_2+w,I}]_Q.\notag
\end{align}
Similarly, we have\noeqref{claim:factor2.2}\noeqref{claim:factor2.3}
\begin{align}
\label{claim:factor2.2}
[\tau_{n a_1+w}(V_{\Z})]_Q &= [\Span{a_1,b_1+nw}]_Q + [\Span{a_2,b_2,I}]_Q, \\
\label{claim:factor2.3}
[\tau_{a_2+w}(V_{\Z})]_Q &= [\Span{a_1,b_1}]_Q + [\Span{a_2,b_2+w,I}]_Q, \\
\label{claim:factor2.4}
[V_{\Z}]_Q &= [\Span{a_1,b_1}]_Q + [\Span{a_2,b_2,I}]_Q.
\end{align}
Adding \eqref{claim:factor2.1}-\eqref{claim:factor2.4} and using the fact that $\QK_g^1$ is an elementary abelian $2$-group, we 
deduce that
\begin{align*}
&[\tau_{n a_1+a_2 + w}(V_{\Z})]_Q + [\tau_{n a_1+w}(V_{\Z})]_Q + [\tau_{a_2+w}(V_{\Z})]_Q + [V_{\Z}]_Q \\
&=2 [\Span{a_1,b_1+nw}]_Q + 2[\Span{a_2,b_2+w,I}]_Q + 2[\Span{a_1,b_1}]_Q + 2[\Span{a_2,b_2,I}]_Q = 0.\qedhere
\end{align*}
\end{proof}

We now turn to the proof of the proposition.  We must prove that the kernel of the mod-$2$ reduction
map $\Sp_{2g}(\Z) \rightarrow \Sp_{2g}(\bbF_2)$ acts trivially on $\QK_g^1$.  This kernel
is called the {\em level-$2$ congruence subgroup} of $\Sp_{2g}(\Z)$, and is denoted
$\Sp_{2g}(\Z,2)$.  It is generated by squares of primitive transvections;
see \cite[\S 10]{MennickeCongruenceSp}.\footnote{Fixing a symplectic basis $\{a_1,b_1,\ldots,a_g,b_g\}$ 
for $H_{\Z}$, the
main theorem of \cite[\S 10]{MennickeCongruenceSp} says that for all $\ell \geq 2$ the
level-$\ell$ congruence subgroup $\Sp_{2g}(\Z,\ell) = \ker(\Sp_{2g}(\Z) \rightarrow \Sp_{2g}(\Z/\ell))$
is the smallest normal subgroup containing $\tau_{a_1}^{\ell}$.  For
$\phi \in \Sp_{2g}(\Z)$ we have $\phi \tau_{a_1}^{\ell} \phi^{-1} = \tau_{\phi(a_1)}^{\ell}$.  Each
$\phi(a_1)$ is primitive, so the smallest normal subgroup containing $\tau_{a_1}^{\ell}$ is generated
by a set of $\ell$-th powers of primitive transvections.  We conclude that
$\Sp_{2g}(\Z,\ell)$ is generated by $\ell$-th powers of primitive transvections.}  
Fixing a primitive $v \in H_{\Z}$, we must therefore prove that $\tau_{v}^2$ acts trivially on $\QK_g^1$.

Lemma~\ref{lemma:qkgenz} says that $\QK_g^1$ is generated by $[V_{\Z}]_Q$ with 
$V_{\Z}$ a genus-$1$ symplectic summand of $H_{\Z}$.  It is enough to prove that
$\tau_v^2$ acts trivially on such a $[V_{\Z}]_Q$.  We will prove that $\tau_v^2$
acts trivially on $[V_{\Z}]_Q$ 
for arbitrary nonzero symplectic summands $V_{\Z}$.  Below we will prove this
for $V_{\Z}$ of genus at least $2$.
To see that this implies
the general case, assume that $V_{\Z}$ has genus $1$.  
Using the relation from Lemma~\ref{lemma:orthogonalrelation},
we have
$[V_{\Z}]_Q + [V_{\Z}^{\perp}]_Q = [H_{\Z}]_Q$.
Since $g \geq 3$, both $V_{\Z}^{\perp}$ and $H_{\Z}$ have genus at least $2$.  If $\tau_{v}^2$
fixes $[V_{\Z}^{\perp}]_Q$ and $[H_{\Z}]_Q$, it will also fix $[V_{\Z}]_Q$.

We have now reduced to the case where $V_{\Z}$ has genus at least $2$.
We must prove that $\tau^2_v$ acts trivially on $[V_{\Z}]_Q$.
Write $v = n a_1 + w$ with $a_1 \in V_{\Z}$ primitive and $n \in \Z$ and $w \in V_{\Z}^{\perp}$.
Using Lemma~\ref{lemma:completespbasis}, complete $\{a_1\}$ to a symplectic basis $\{a_1,b_1,\ldots,a_h,b_h\}$ for $V_{\Z}$.  By
assumption we have $h \geq 2$, so it makes sense to discuss $a_2$.  Observe first that
\begin{align}
\label{eqn:reducetotwox}
[\tau^2_v(V_{\Z})]_Q = [\tau^2_{n a_1+w}(V_{\Z})]_Q &= [\Span{a_1,b_1+2n(na_1+w),a_2,b_2,\ldots,a_h,b_h}]_Q \\
                             &= [\Span{a_1,b_1+2n(2na_1+w),a_2,b_2,\ldots,a_h,b_h}]_Q \notag\\
                             &= [\tau_{2n a_1+w}(V_{\Z})]_Q.\notag
\end{align}
We will therefore focus on $[\tau_{2n a_1+w}(V_{\Z})]_Q$.  In the following calculation,
we will use the above claim in the first, third, and fourth equality.  The second equality
rewrites the term we are considering in a clever way, and the final equality cancels
terms using the fact that we are working in an elementary abelian $2$-group:
\begin{align*}
&[\tau_{2n a_1+w}(V_{\Z})]_Q + [\tau_{a_2+w}(V_{\Z})]_Q + [V_{\Z}]_Q \\
&=[\tau_{2n a_1+a_2+w}(V_{\Z})]_Q \\
&=[\tau_{n a_1 + (n a_1 + a_2) + w}(V_{\Z})]_Q \\
&=[\tau_{n a_1+w}(V_{\Z})]_Q + [\tau_{n a_1 + a_2 + w}(V_{\Z})]_Q + [V_{\Z}]_Q \\
&=[\tau_{n a_1+w}(V_{\Z})]_Q + [\tau_{n a_1 + w}(V_{\Z})]_Q + [\tau_{a_2 + w}(V_{\Z})]_Q + [V_{\Z}]_Q + [V_{\Z}]_Q \\
&=[\tau_{a_2 + w}(V_{\Z})]_Q.
\end{align*}
Subtracting $[\tau_{a_2 + w}(V_{\Z})]_Q$ from both sides of this, we get that
$[\tau_{2n a_1+w}(V_{\Z})]_Q + [V_{\Z}]_Q = 0$.  By \eqref{eqn:reducetotwox},
we conclude that $[\tau^2_v(V_{\Z})]_Q = [V_{\Z}]_Q$, as desired.
\end{proof}

\section{Reduction to generators and relations for \texorpdfstring{$\BCJ_2(g)$}{B(2g,2)}}
\label{section:qkgenrel}

Fix $g \geq 3$.  This section reduces Theorem~\ref{maintheorem:qkg} to 
Theorem~\ref{maintheorem:qkgpres}, which gives a description of $\BCJ_2(g)$ 
by generators and relations.

\subsection{Symplectic subspaces and orthogonality}

Recall that $H = \HH_1(\Sigma_g^1;\bbF_2)$.  This
is equipped with an $\bbF_2$-valued symplectic form $\hiota(-,-)$.  We will use $\hiota(-,-)$
to define orthogonal complements and orthogonality for subspaces of $H$.
We have previously defined symplectic summands for $H_{\Z} = \HH_1(\Sigma_g^1)$, and
we now introduce similar terminology for $H$.

A {\em symplectic subspace} of $H$ is a subspace $V < H$ on which the
symplectic form restricts to a symplectic form.  Since $H \cong \bbF_2^{2g}$ is a vector
space, a symplectic subspace $V$ of $H$ is automatically a direct summand
and satisfies $H = V \oplus V^{\perp}$.  We have $V \cong \bbF_2^{2h}$ for some
$h \leq g$, and we call $h$ the {\em genus} of $V$.

\subsection{Lifting symplectic subspaces}

We will need the following lemma:

\begin{lemma}
\label{lemma:liftspsubspaces}
Fix $g \geq 3$.  Let $H_{\Z} = \HH_1(\Sigma_g^1)$ and $H = \HH_1(\Sigma_g^1;\bbF_2)$.
Let $V(1),\ldots,V(k)$ be pairwise orthogonal nonzero symplectic subspaces of $H$.  We can
find pairwise orthogonal symplectic summands $V_{\Z}(1),\ldots,V_{\Z}(k)$ of $H_{\Z}$
such that each $V_{\Z}(i)$ projects to $V(i)$ under the projection $H_{\Z} \rightarrow H$.
\end{lemma}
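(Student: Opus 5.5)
Because the $V(i)$ are pairwise orthogonal and each is symplectic, their sum $V(1)\oplus\cdots\oplus V(k)$ is a symplectic subspace of $H$. Its orthogonal complement $V(k+1)$ is then also symplectic, and
\[H = V(1)\oplus\cdots\oplus V(k)\oplus V(k+1)\]
is an orthogonal splitting. Choose a symplectic basis for each $V(i)$, $1\le i\le k+1$. Their union is a symplectic basis $\{\oa_1,\ob_1,\ldots,\oa_g,\ob_g\}$ of $H$ in which each $V(i)$ is spanned by a block of consecutive pairs $\{\oa_j,\ob_j\}$. (Here the overline just denotes an element of $H$; it is not the notation from the BCJ section.) If the last block is zero, we simply omit it.

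The whole lemma now reduces to one standard fact: the reduction map $\Sp_{2g}(\Z)\to\Sp_{2g}(\bbF_2)$ is surjective. This is the case $\ell=2$ of the well-known surjectivity $\Sp_{2g}(\Z)\to\Sp_{2g}(\Z/\ell)$. It can be proved directly by noting that $\Sp_{2g}(\bbF_2)$ is generated by transvections $\tau_{\ov}$, and every $\ov\in H$ lifts to some $v\in H_\Z$, so $\tau_v$ lifts $\tau_{\ov}$.

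Now fix a symplectic basis $\{a_1,b_1,\ldots,a_g,b_g\}$ of $H_\Z$. Its mod-$2$ reduction is a symplectic basis of $H$. Let $\overline{M}\in\Sp_{2g}(\bbF_2)$ be the matrix taking the reduced basis to the adapted basis $\{\oa_j,\ob_j\}$, and lift $\overline{M}$ to some $M\in\Sp_{2g}(\Z)$. Then $\{M a_j, M b_j\}$ is a symplectic basis of $H_\Z$ whose reduction mod $2$ is exactly the adapted basis. Define $V_\Z(i)$ to be the span of those $Ma_j, Mb_j$ whose reductions lie in the block for $V(i)$.

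Three properties then follow directly:
\begin{itemize}
\item Each $V_\Z(i)$ is spanned by part of a symplectic basis, so $\omega$ restricts to a symplectic form on it. By the remark in the paper, it is therefore a symplectic summand.
\item Distinct $V_\Z(i)$ are spanned by disjoint sets of symplectic basis vectors, so they are pairwise orthogonal.
\item The projection $H_\Z\to H$ sends the basis of $V_\Z(i)$ onto the basis of $V(i)$, so $V_\Z(i)$ projects onto $V(i)$.
\end{itemize}

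The only nontrivial input is the surjectivity of $\Sp_{2g}(\Z)\to\Sp_{2g}(\bbF_2)$. I would cite it as standard, with the transvection argument above as justification. Everything else is bookkeeping with adapted bases.
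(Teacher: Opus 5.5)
Your proposal is correct and matches the paper's proof step for step. Like the paper, you add $(V(1)\oplus\cdots\oplus V(k))^{\perp}$ to get an orthogonal decomposition of $H$, take the union of symplectic bases of the pieces, lift it to a symplectic basis of $H_{\Z}$ using surjectivity of $\Sp_{2g}(\Z)\to\Sp_{2g}(\bbF_2)$, and let $V_{\Z}(i)$ be the span of the lifted block for $V(i)$. Your transvection argument for that surjectivity is a valid justification of the one fact the paper cites as standard.
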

\begin{proof}
Without loss of generality, we can assume that $V(1) \oplus \cdots \oplus V(k) = H$; indeed,
if the $V(i)$ span a proper subspace of $H$ we can add the symplectic subspace
$\left(V(1) \oplus \cdots \oplus V(k)\right)^{\perp}$.  For $1 \leq i \leq k$, let $h(i)$
be the genus of $V(i)$ and let $\{a(i)_1,b(i)_1,\ldots,a(i)_{h(i)},b(i)_{h(i)}\}$
be a symplectic basis for $V(i)$.  The set 
\[\Set{$a(i)_j,b(i)_j$}{$1 \leq i \leq k$, $1 \leq j \leq h(i)$}\]
is a symplectic basis for $H$.  Since the map $\Sp_{2g}(\Z) \rightarrow \Sp_{2g}(\bbF_2)$
is surjective, we can lift this to a symplectic basis for $H_{\Z}$.  In this lifted
symplectic basis, let $a_{\Z}(i)_j$ and $b_{\Z}(i)_j$ be the elements projecting
to $a(i)_j$ and $b(i)_j$, respectively.  For $1 \leq i \leq k$, let
$V_{\Z}(i)$ be the span of $\Set{$a_{\Z}(i)_j,b_{\Z}(i)_j$}{$1 \leq j \leq h(i)$}$.
The $V_{\Z}(i)$ are pairwise orthogonal symplectic summands of $H_{\Z}$ such that
each $V_{\Z}(i)$ projects to $V(i)$.
\end{proof}

\subsection{Action of level-2 subgroup}

Recall from the proof of Proposition~\ref{proposition:factor2} that $\Sp_{2g}(\Z,2)$
denotes the level-$2$ subgroup of $\Sp_{2g}(\Z)$, i.e., the kernel of the map
$\Sp_{2g}(\Z) \rightarrow \Sp_{2g}(\bbF_2)$.  We will need the following:

\begin{lemma}
\label{lemma:level2action}
Fix $g \geq 3$.  Let $H_{\Z} = \HH_1(\Sigma_g^1)$ and $H = \HH_1(\Sigma_g^1;\bbF_2)$.
Let $V$ be a nonzero symplectic subspace of $H$ and let $V_{\Z}$ and $V'_{\Z}$ be
symplectic summands of $H_{\Z}$ that both project to $V$ under the projection
$H_{\Z} \rightarrow H$.  There exists $f \in \Sp_{2g}(\Z,2)$ such that
$f(V_{\Z}) = V'_{\Z}$.
\end{lemma}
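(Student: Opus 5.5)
The plan is to reduce to a statement about symplectic bases and then use surjectivity of $\Sp_{2g}(\Z)\to\Sp_{2g}(\bbF_2)$ to correct by an element of the level-$2$ subgroup.

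\textbf{Step 1: choose adapted bases.} Let $h$ be the genus of $V$. Since $V_{\Z}$ and $V'_{\Z}$ are symplectic summands, $H_{\Z}=V_{\Z}\oplus V_{\Z}^{\perp}=V'_{\Z}\oplus (V'_{\Z})^{\perp}$. Each projection $H_{\Z}\to H$ restricted to $V_{\Z}$ is reduction mod $2$ on a free summand of rank $2h$. So $V_{\Z}/2V_{\Z}$ injects into $H$, and its image is $V$. The same holds for $V'_{\Z}$. Likewise $V_{\Z}^{\perp}$ and $(V'_{\Z})^{\perp}$ both project onto $V^{\perp}$, since their images are orthogonal to $V$ and the dimensions are complementary.

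Fix a symplectic basis $\{a_1,b_1,\ldots,a_h,b_h\}$ for $V_{\Z}$ and $\{a_{h+1},\ldots,b_g\}$ for $V_{\Z}^{\perp}$. Together these give a symplectic basis $\mathcal{B}$ of $H_{\Z}$ adapted to $V_{\Z}$.

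Reducing mod $2$ gives a symplectic basis $\overline{\mathcal{B}}$ of $H$ whose first $2h$ vectors span $V$. Choose a symplectic basis $\mathcal{B}'$ of $H_{\Z}$ adapted to $V'_{\Z}$ in the same way. Its reduction $\overline{\mathcal{B}'}$ is again a symplectic basis of $H$ whose first $2h$ vectors span $V$.

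\textbf{Step 2: match the reductions.} This is the main point. We cannot expect $\overline{\mathcal{B}}=\overline{\mathcal{B}'}$ directly, so we adjust $\mathcal{B}'$ by an element of $\Sp_{2g}(\Z)$ preserving $V'_{\Z}$.

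The element $N\in\Sp_{2g}(\bbF_2)$ taking $\overline{\mathcal{B}'}$ to $\overline{\mathcal{B}}$ preserves $V$ and $V^{\perp}$. It therefore lies in $\Sp(V)\times\Sp(V^{\perp})$.

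Now $\Sp_{2h}(\Z)\to\Sp_{2h}(\bbF_2)$ and $\Sp_{2g-2h}(\Z)\to\Sp_{2g-2h}(\bbF_2)$ are both surjective. This is the same fact already used in Lemma~\ref{lemma:liftspsubspaces}; we take the convention that it holds trivially in genus $0$. So we can lift $N$ to some $\widetilde N\in\Sp(V'_{\Z})\times\Sp((V'_{\Z})^{\perp})$, where each factor is identified with the appropriate integral group via $\mathcal{B}'$.

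Replacing $\mathcal{B}'$ by $\widetilde N(\mathcal{B}')$ keeps it a symplectic basis adapted to $V'_{\Z}$, and now its reduction mod $2$ equals $\overline{\mathcal{B}}$.

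\textbf{Step 3: conclude.} Let $f\in\Sp_{2g}(\Z)$ be the unique map sending $\mathcal{B}$ to the adjusted $\mathcal{B}'$. It is symplectic because it carries one symplectic basis to another.

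Its reduction mod $2$ sends $\overline{\mathcal{B}}$ to itself, so it is the identity and $f\in\Sp_{2g}(\Z,2)$. Finally, $f$ sends the span of the first $2h$ vectors of $\mathcal{B}$, which is $V_{\Z}$, to the span of the first $2h$ vectors of $\mathcal{B}'$, which is $V'_{\Z}$.

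The only nontrivial input is the surjectivity of $\Sp_{2k}(\Z)\to\Sp_{2k}(\bbF_2)$ on the block factors, and that is standard.
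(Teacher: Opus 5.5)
Your proof is correct and is essentially the paper's argument. The paper takes any $\phi\in\Sp_{2g}(\Z)$ with $\phi(V_{\Z})=V'_{\Z}$ and corrects it on the right by a lift $\psi\in\Sp(V_{\Z})\times\Sp(V_{\Z}^{\perp})$ of $\ophi$, whereas you build $\phi$ from adapted bases and correct it on the left by a lift $\widetilde N\in\Sp(V'_{\Z})\times\Sp((V'_{\Z})^{\perp})$ of $\ophi^{-1}$; this is the same idea up to conjugating by $\phi$, and your version also makes explicit two facts the paper uses tacitly, namely that $\phi$ exists and that $V_{\Z}^{\perp}$ and $(V'_{\Z})^{\perp}$ both reduce to $V^{\perp}$.
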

\begin{proof}
Choose $\phi \in \Sp_{2g}(\Z)$ such that $\phi(V_{\Z}) = V'_{\Z}$.  We will
multiply $\phi$ by an appropriate matrix to make it lie in $\Sp_{2g}(\Z,2)$.
Let $\ophi \in \Sp_{2g}(\bbF_2)$ be the image of $\phi$.  The matrix $\ophi$
preserves the ordered decomposition $H = V \oplus V^{\perp}$.  Let
\begin{align*}
j\colon \Sp(V) \times \Sp(V^{\perp}) &\rightarrow \Sp_{2g}(\bbF_2),\\
j_{\Z}\colon \Sp(V_{\Z}) \times \Sp(V_{\Z}^{\perp}) &\rightarrow \Sp_{2g}(\Z)
\end{align*} 
be the natural inclusions.
We can write $\ophi = j(\opsi_1 \times \opsi_2)$ for some $\opsi_1 \in \Sp(V)$ and $\opsi_2 \in \Sp(V^{\perp})$.
Since the map from the integral symplectic group to the $\bbF_2$-symplectic group is surjective, we
can find $\psi_1 \in \Sp(V_{\Z})$ and $\psi_2 \in \Sp(V_{\Z}^{\perp})$ that map to $\psi_1$ and $\psi_2$,
respectively. 
Set $\psi = j_{\Z}(\psi_1 \times \psi_2)$.  Since $\psi(V_{\Z}) = V_{\Z}$, the element
$f = \phi \circ \psi^{-1} \in \Sp_{2g}(\Z)$ takes $V_{\Z}$ to $V_{\Z}'$ and maps to the identity in $\Sp_{2g}(\bbF_2)$.
In other words, $f \in \Sp_{2g}(\Z,2)$, as desired.
\end{proof}

\subsection{Mod-2 symplectic subspace generators and relations}

Let $V$ be a nonzero symplectic subspace of $H$.  By Lemma~\ref{lemma:liftspsubspaces}, we can
lift $V$ to a symplectic
summand $V_{\Z}$ of $H_{\Z}$ that projects to $V$ under the map $H_{\Z} \rightarrow H$.
This gives us an element $[V_{\Z}]_Q$ of $\QK_g^1$.  If $V'_{\Z}$ is another symplectic
summand of $H_{\Z}$ that projects to $V$, then Lemma \ref{lemma:level2action} says that there exists some
$f \in \Sp_{2g}(\Z,2)$ such that $f(V_{\Z}) = V'_{\Z}$.  Since the action of $\Sp_{2g}(\Z)$ on $\QK_g^1$
factors through $\Sp_{2g}(\bbF_2)$ (Proposition~\ref{proposition:factor2}), we have
\[[V_{\Z}]_Q = f \cdot [V_{\Z}]_Q = [f(V_{\Z})]_Q = [V'_{\Z}]_Q.\]
In other words, the element $[V_{\Z}]_Q$ of $\QK_g^1$ only depends on $V$.
We will denote it by $[V]_Q$.  We have the following:

\begin{lemma}
\label{lemma:qkgenrel2}
Let $g \geq 3$ and let $H = \HH_1(\Sigma_g^1;\bbF_2)$.  The following hold:
\begin{itemize}
\item[(i)] The group $\QK_g^1$ is generated by elements of the form
$[V]_Q$ with $V$ a genus-$1$ symplectic subspace of $H$.
\item[(ii)] Let $V,W < H$ be nonzero orthogonal symplectic subspaces.  
We then have $[V]_Q+[W]_Q = [V \oplus W]_Q$.
\end{itemize}
\end{lemma}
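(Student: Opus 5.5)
Both parts follow by transporting the integral statements, Lemma~\ref{lemma:qkgenz} and Lemma~\ref{lemma:orthogonalrelation}, down to $H$. The key input is that $[V]_Q$ is well defined, i.e.\ independent of the lift $V_{\Z}$. This was established just before the statement using Lemma~\ref{lemma:liftspsubspaces}, Lemma~\ref{lemma:level2action}, and Proposition~\ref{proposition:factor2}.

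For (i), I start from Lemma~\ref{lemma:qkgenz}: $\QK_g^1$ is generated by the elements $[V_{\Z}]_Q$ with $V_{\Z}$ a genus-$1$ symplectic summand of $H_{\Z}$. Given such a $V_{\Z}$, let $V$ be its image in $H$. I need to check that $V$ is a genus-$1$ symplectic subspace. Take a symplectic basis $\{a,b\}$ of $V_{\Z}$ and complete it to a symplectic basis of $H_{\Z}$ by adjoining a basis of $V_{\Z}^{\perp}$. Its mod-$2$ reduction is a symplectic basis of $H$. Hence the images $\bar a,\bar b$ are linearly independent, satisfy $\hiota(\bar a,\bar b)=1$, and span a genus-$1$ symplectic subspace $V$. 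Since $V_{\Z}$ is a lift of $V$, well-definedness gives $[V_{\Z}]_Q=[V]_Q$. So every generator from Lemma~\ref{lemma:qkgenz} has the form $[V]_Q$ with $V$ of genus $1$, which proves (i).

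For (ii), let $V,W$ be nonzero orthogonal symplectic subspaces of $H$. Apply Lemma~\ref{lemma:liftspsubspaces} with $k=2$ to get orthogonal symplectic summands $V_{\Z},W_{\Z}$ of $H_{\Z}$ projecting onto $V$ and $W$. Then $V_{\Z}\oplus W_{\Z}$ is a symplectic summand. It projects onto $V+W=V\oplus W$, because the sum is direct in $H$: orthogonality together with nondegeneracy on $V$ forces $V\cap W=0$. By Lemma~\ref{lemma:orthogonalrelation},
\[[V_{\Z}]_Q+[W_{\Z}]_Q=[V_{\Z}\oplus W_{\Z}]_Q.\]
Using well-definedness on each of the three terms turns this into $[V]_Q+[W]_Q=[V\oplus W]_Q$.

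There is no real obstacle here, since the substantive work (independence of the lift) is already done. The only thing needing care is the small check that reduction mod $2$ sends a symplectic summand onto a symplectic subspace of the same genus, which the basis-completion argument above handles.
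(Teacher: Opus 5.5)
Your proposal is correct and follows the paper's proof. The paper also transports both parts from Lemmas~\ref{lemma:qkgenz} and~\ref{lemma:orthogonalrelation}, using the well-definedness of $[V]_Q$ and Lemma~\ref{lemma:liftspsubspaces} to get orthogonal lifts in (ii); your checks that a genus-$1$ summand reduces to a genus-$1$ symplectic subspace and that $V\cap W=0$ are details the paper leaves implicit.
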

\begin{proof}
In light of the above discussion, these follow from Lemmas~\ref{lemma:qkgenz} and \ref{lemma:orthogonalrelation}, which
are the corresponding results for symplectic summands of $H_{\Z} = \HH_1(\Sigma_g^1)$.  The only fact
that needs to be added is that in (ii) we can lift $V$ and $W$ to orthogonal symplectic summands
$V_{\Z}$ and $W_{\Z}$ of $H_{\Z}$, which follows from Lemma~\ref{lemma:liftspsubspaces}.  
\end{proof}

\subsection{Vector space given by generators and relations}
\label{section:liftedbcj}

Recall that $H = \HH_1(\Sigma_g^1;\bbF_2)$.
Define $\Pres_g^1$ to be the $\bbF_2$-vector space given by generators and relations as follows:
\begin{itemize}
\item The generators of $\Pres_g^1$ are formal symbols $\SymSub{V}$ with $V < H$ a nonzero
symplectic subspace.
\item The relations of $\Pres_g^1$ are as follows.  Let $V,W < H$ be nonzero orthogonal symplectic subspaces.
We then have the relation $\SymSub{V} + \SymSub{W} = \SymSub{V \oplus W}$.
\end{itemize}
For later use, note that while we have been assuming that $g \geq 3$ the vector space $\Pres_g^1$ 
also makes sense for $g=1$ and $g=2$.  Returning to our standing assumption that $g \geq 3$,
Lemma~\ref{lemma:qkgenrel2} implies that there is a map $\rho\colon \Pres_g^1 \rightarrow \QK_g^1$ defined on generators as follows:
\[\rho(\SymSub{V}) = [V]_Q.\]
It will follow from our results below that this is an isomorphism.

Recall that the induced BCJ homomorphism is of the form
$\sigma\colon \QK_g^1 \rightarrow \BCJ_2(g)$.  We will recall the form
of its target $\BCJ_2(g)$ later when we start doing explicit calculations.
We will call the map $\beta = \sigma \circ \rho$ from $\Pres_g^1$ to $\BCJ_2(g)$
the {\em lifted BCJ homomorphism}.  

\subsection{Alternate main theorem, II}

Part~\ref{part:3} of this paper is devoted to proving the
following theorem:

\begin{primeprimedtheorem}{maintheorem:genker}
\label{maintheorem:qkgpres}
For $g \geq 3$, the lifted BCJ homomorphism $\beta\colon \Pres_g^1 \rightarrow \BCJ_2(g)$ is
an isomorphism.
\end{primeprimedtheorem}

As we will see, this theorem will quickly imply Theorem~\ref{maintheorem:qkg}.

\subsection{Proof of \texorpdfstring{Theorem~\ref{maintheorem:qkg}}{Theorem A'}}
\label{section:theoremaprimeproof}

We recall the statement of Theorem~\ref{maintheorem:qkg}:

\newtheorem*{maintheorem:qkg}{Theorem~\ref{maintheorem:qkg}}
\begin{maintheorem:qkg}
For $g \geq 3$, the induced BCJ homomorphism
$\sigma\colon \QK_g^1 \rightarrow \BCJ_2(g)$ is an isomorphism.
\end{maintheorem:qkg}
\begin{proof}[Proof, assuming Theorem~\ref{maintheorem:qkgpres}]
Let $\rho\colon \Pres_g^1 \rightarrow \QK_g^1$ be the map defined above.
Lemma~\ref{lemma:qkgenrel2} implies that $\rho$ is surjective.  By definition, the lifted
BCJ homomorphism $\beta\colon \Pres_g^1 \rightarrow \BCJ_2(g)$ equals the composition
\[\begin{tikzcd}
\Pres_g^1 \arrow[two heads]{r}{\rho} & \QK_g^1 \arrow{r}{\sigma} & \BCJ_2(g).
\end{tikzcd}\]
Theorem~\ref{maintheorem:qkgpres} says that this is an isomorphism.  Since $\rho$ is surjective,
this implies that $\sigma$ is an isomorphism, as desired.
\end{proof}

\part{Generators and relations}
\label{part:3}

This final part of the paper proves Theorem~\ref{maintheorem:qkgpres}.  This will complete the
proofs of our main theorems. 
Indeed, we proved at the end of Part~\ref{part:2} that Theorem~\ref{maintheorem:qkgpres} implies
Theorem~\ref{maintheorem:qkg}, and we also proved at the end of Part~\ref{part:1}
that Theorem~\ref{maintheorem:qkg} implies Theorems~\ref{maintheorem:genker} and \ref{maintheorem:gencomm}.

There are four sections.  In \S \ref{section:part3intro}, we recall some
details about the BCJ homomorphism and its target, and also explain how
to generalize the statement of Theorem~\ref{maintheorem:qkgpres} from $g \geq 3$
to $g \geq 2$.  Next, in \S \ref{section:qkgpresgenus23}
we prove Theorem~\ref{maintheorem:qkgpres} in the special cases $g=2$ and $g=3$.  These
two cases will form the base of our inductive proof of the general case.  We then
describe an inductive presentation of the target of the BCJ homomorphism in
\S \ref{section:inductivebcj}.  Finally, we prove Theorem~\ref{maintheorem:qkgpres} in
\S \ref{section:qkgpresproof}.

\begin{genusassumption}
Throughout Part~\ref{part:3}, we will assume that the genus $g$ satisfies $g \geq 2$.
We will need this assumption rather than $g \geq 3$ for our inductive arguments.
\end{genusassumption}

\section{Preliminaries on the lifted BCJ homomorphism}
\label{section:part3intro}

Fix $g \geq 2$.
For $g \geq 3$, the lifted BCJ homomorphism is of the form $\beta\colon \Pres_g^1 \rightarrow \BCJ_2(g)$.  This section recalls
the structure of $\BCJ_2(g)$ and establishes a few preliminary results, and also
shows how to generalize the lifted BCJ homomorphism to the case $g=2$.  Let $H = \HH_1(\Sigma_g^1;\bbF_2)$
and let $\hiota(-,-)$ be the $\bbF_2$-valued intersection form on $H$.

\subsection{Reminder of the target of the lifted BCJ homomorphism}

We start by reminding the reader about the definition of $\BCJ_2(g)$.
Let $\bbF_2[\oH]$ be the ring of polynomials in the formal variables $\Set{$\oh$}{$h \in H$}$ and let $\BCJ(g)$
be the quotient of $\bbF_2[\oH]$ by the ideal generated by:
\begin{itemize}
\item $f^2-f$ for $f \in \bbF_2[\oH]$; and
\item $\overline{h_1+h_2} - (\oh_1 + \oh_2 + \hiota(h_1,h_2))$ for all $h_1,h_2 \in H$.
\end{itemize}
Let $\BCJ_{n}(g)$ be the image in $\BCJ(g)$ of the subspace of $f \in \bbF_2[\oH]$ of degree at most $n$,
so we have an increasing chain of vector spaces
\[0 = \BCJ_{-1}(g) \subsetneq \BCJ_0(g) \subsetneq \BCJ_1(g) \subsetneq \cdots \subsetneq \BCJ_{2g}(g) = \BCJ(g).\]
For $0 \leq n \leq 2g$, the dimension of $\BCJ_{n}(g)$ is
\[\binom{2g}{0} + \binom{2g}{1} + \cdots + \binom{2g}{n}.\]
In particular, $\BCJ_2(g)$ has dimension
\[\binom{2g}{0} + \binom{2g}{1} + \binom{2g}{2} = 1 + 2g + \frac{2g(2g-1)}{2} = 2g^2+g+1.\]

\subsection{Calculating the lifted BCJ homomorphism}

Recall that $\Pres_g^1$ is the $\bbF_2$-vector space given by generators and relations as follows:
\begin{itemize}
\item The generators of $\Pres_g^1$ are formal symbols $\SymSub{V}$ with $V < H$ a nonzero
symplectic subspace.
\item The relations of $\Pres_g^1$ are as follows.  Let $V,W < H$ be nonzero orthogonal symplectic subspaces.
We then have the relation $\SymSub{V} + \SymSub{W} = \SymSub{V \oplus W}$.
\end{itemize}
As we observed when we first defined it, this makes sense for all $g \geq 1$.  We have:

\begin{lemma}
\label{lemma:liftedbcjcalc}
Fix $g \geq 3$, and let $\beta\colon \Pres_g^1 \rightarrow \BCJ_2(g)$ be
the lifted BCJ homomorphism.  Let $V$ be a nonzero symplectic subspace
of $H = \HH_1(\Sigma_g^1;\bbF_2)$ and let $\{x_1,y_1,\ldots,x_h,y_h\}$ be a symplectic
basis for $V$.  Then $\beta(\SymSub{V}) = \ox_1 \oy_1 + \cdots + \ox_h \oy_h$.
\end{lemma}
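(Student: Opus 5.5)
The plan is to unwind the definitions of $\beta$ and reduce to Johnson's formula for $\sigma$ on a separating twist from \S\ref{subsection:septwistsbcj}. By definition $\beta = \sigma\circ\rho$ and $\rho(\SymSub{V}) = [V]_Q$. Also $[V]_Q = [V_{\Z}]_Q$ for any symplectic summand $V_{\Z}$ of $H_{\Z}$ projecting to $V$, which exists by Lemma~\ref{lemma:liftspsubspaces}. In turn $[V_{\Z}]_Q = [T_x]_Q$ for a separating twist $T_x$ whose associated symplectic summand is $V_{\Z}$; such a twist exists by Johnson's result quoted before Lemma~\ref{lemma:qkgenz}. Hence $\beta(\SymSub{V}) = \sigma(T_x)$, where $\sigma$ is now the honest BCJ homomorphism on $\cK_g^1 \subset \Torelli_g^1$.

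Next I apply \S\ref{subsection:septwistsbcj}. Since $b=1$, there is a unique subsurface $S \cong \Sigma_h^1$ with $\partial S = x$. Johnson's formula gives $\sigma(T_x) = \ox'_1\oy'_1 + \cdots + \ox'_h\oy'_h$ for any symplectic basis $\{x'_1,y'_1,\ldots,x'_h,y'_h\}$ of $\HH_1(S;\bbF_2)$. By the definition of the associated summand, $\HH_1(S) = V_{\Z}$, so $\HH_1(S;\bbF_2)$ is the image of $V_{\Z}$ in $H$, which is $V$.

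It remains to see that the expression $\ox_1\oy_1+\cdots+\ox_h\oy_h$ does not depend on the chosen symplectic basis of $V$. This is the only point needing care, since the formula in \S\ref{subsection:septwistsbcj} is stated for some basis and we need it for the given one. I would handle it by viewing the expression as the element of $\Omega_g^{\ast}$ sending $q$ to $\sum q(x_i)q(y_i) = \arf(q|_V)$. The Arf invariant is independent of the symplectic basis, and $\BCJ(g) \cong \Omega_g^{\ast}$ is an isomorphism, so the element of $\BCJ_2(g)$ is basis-independent.

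Combining these three steps gives $\beta(\SymSub{V}) = \ox_1\oy_1+\cdots+\ox_h\oy_h$, as claimed. I expect no real obstacle. The main bookkeeping is checking that the chain of identifications $[V]_Q=[V_{\Z}]_Q=[T_x]_Q$ is compatible with the way $\sigma$ was induced on $\QK_g^1$, and this follows directly from the construction.
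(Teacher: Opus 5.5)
Your proposal is correct and matches the paper's proof. The paper likewise notes that $\beta(\SymSub{V})=\sigma(T_x)$ for a separating twist whose associated symplectic summand projects to $V$, and then quotes Johnson's formula from \S\ref{subsection:septwistsbcj}; your Arf-invariant check of basis independence is a valid extra detail that the paper mentions only in passing when extending $\beta$ to genus two.
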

\begin{proof}
By definition $\beta(\SymSub{V})$ is the image under the BCJ homomorphism of
a separating twist $T_x$ such that the symplectic summand associated to $x$
projects to $V$.  In light of this, the lemma is immediate from the
formula for the image of $T_x$ under the BCJ homomorphism from \S \ref{subsection:septwistsbcj}.
\end{proof}

\subsection{Extending to genus two}

Up until now, the lifted BCJ homomorphism $\beta\colon \Pres_g^1 \rightarrow \BCJ_2(g)$
has only been defined for $g \geq 3$.  Extend it to $g=2$
by defining $\beta\colon \Pres_2^1 \rightarrow \BCJ_2(2)$ using the formula
from Lemma \ref{lemma:liftedbcjcalc}.  This requires checking two things:
\begin{itemize}
\item For a nonzero symplectic subspace $V$ of $H$ with symplectic basis
$\{x_1,y_1,\ldots,x_h,y_h\}$, the formula $\beta(\SymSub{V}) = \ox_1 \oy_1 + \cdots + \ox_h \oy_h$
does not depend on the choice of symplectic basis.  This can be verified directly, or
alternatively one can just observe that the formula for the ordinary BCJ homomorphism
from \S \ref{subsection:septwistsbcj} also works for $g=2$.  For that formula to make
sense, $\ox_1 \oy_1 + \cdots + \ox_h \oy_h$ cannot depend on the choice
of symplectic basis.
\item Using that formula on generators, the relations in $\Pres_2^1$ go to
relations in $\BCJ_2(2)$.  This is immediate.
\end{itemize}
We will henceforth use this extension and talk about the lifted BCJ homomorphism $\beta\colon \Pres_g^1 \rightarrow \BCJ_2(g)$
for $g \geq 2$.  In our arguments below, the only thing we use
about the lifted BCJ homomorphism is the formula from Lemma \ref{lemma:liftedbcjcalc}.

\subsection{Surjectivity}

We close this section with the following observation:

\begin{lemma}
\label{lemma:betasurjective}
For $g \geq 2$, the lifted BCJ homomorphism $\beta\colon \Pres_g^1 \rightarrow \BCJ_2(g)$ is surjective.
\end{lemma}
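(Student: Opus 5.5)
We will show that the image of $\beta$ contains a spanning set of $\BCJ_2(g)$. Fix a symplectic basis $\{a_1,b_1,\ldots,a_g,b_g\}$ for $H$. In $\BCJ(g)$ every element is a square-free polynomial in $\oa_i,\ob_i$, so $\BCJ_2(g)$ is spanned by $1$, the linear terms $\oh$ for $h$ in the basis, and the products $\oh\,\oh'$ of two distinct basis elements. Since the image of $\beta$ is a subspace, it is enough to produce each of these, or at least a collection of elements spanning the same space.

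The basic observation comes from Lemma~\ref{lemma:liftedbcjcalc}, together with its extension to $g=2$. If $x,y \in H$ satisfy $\hiota(x,y)=1$, then $V=\Span{x,y}$ is a genus-$1$ symplectic subspace and $\beta(\SymSub{V}) = \ox\,\oy$. Using the relation $\overline{h_1+h_2} = \oh_1+\oh_2+\hiota(h_1,h_2)$, we vary the pair $(x,y)$ to isolate terms.

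First, the quadratic terms $\oa_i \ob_i$ come directly from $V=\Span{a_i,b_i}$. Second, for $h=a_i$ and any basis element $h'\notin\{a_i,b_i\}$, the pairs $(a_i,b_i)$ and $(a_i,b_i+h')$ are both symplectic. We get
\[\oa_i\,\overline{b_i+h'} = \oa_i(\ob_i + \oh' + \hiota(b_i,h')) = \oa_i\ob_i + \oa_i\oh',\]
so $\oa_i\oh'$ lies in the image, and symmetrically $\ob_i\oh'$ does too. This gives all the mixed quadratic monomials.

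For the linear and constant terms we use $(a_i, a_i+b_i)$, which is symplectic. Since $\overline{a_i+b_i} = \oa_i+\ob_i+1$, we get $\oa_i(\oa_i+\ob_i+1) = \oa_i + \oa_i\ob_i + \oa_i = \oa_i\ob_i$ because $\oa_i^2=\oa_i$, and this gives nothing new. Instead we try $(a_i+a_j, b_i)$ with $j\neq i$:
\[\overline{a_i+a_j}\,\ob_i = (\oa_i+\oa_j)\ob_i,\]
which is again quadratic. The linear terms are therefore the main obstacle, since products of two $\bbF_2$-affine functions tend to stay quadratic. To get them, take $x=a_i+b_j$ and $y=b_i+b_j$ with $j\neq i$. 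Then $\hiota(x,y)=1$, and we have $\ox = \oa_i+\ob_j$ and $\oy = \ob_i+\ob_j$ with no constant terms, so $\ox\,\oy$ contains $\ob_j^2=\ob_j$. That is, $\ox\,\oy = (\text{quadratic terms}) + \ob_j$, and since the quadratic terms are already in the image, $\ob_j$ is too. Symmetric choices give every $\oa_j$ and $\ob_j$; this needs $g\geq 2$. For the constant, take $x=a_i+b_i$ and $y=b_i$. Then $\ox=\oa_i+\ob_i+1$, so $\ox\,\oy = \oa_i\ob_i + \ob_i + \ob_i = \oa_i\ob_i$, which again fails. Instead take $x=a_i+b_i$ and $y=a_i+b_i+b_i=a_i$; this is the same pair as before. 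So for the constant we use $\Span{a_1+b_1+a_2,\; b_1}$. Here $\overline{a_1+b_1+a_2} = \oa_1+\ob_1+\oa_2+1$, so the product with $\ob_1$ is $\oa_1\ob_1 + \ob_1 + \oa_2\ob_1 + \ob_1 = \oa_1\ob_1+\oa_2\ob_1$, which is quadratic again. If direct choices like this keep failing, we fall back on the full-space generator $\SymSub{H}$ and Lemma~\ref{lemma:bcj2difference}-style Arf considerations. A cleaner route to the constant is the element $\kappa$-type combination $\beta(\SymSub{\Span{a_1+b_1,\,b_1}})$ compared with other such products. We will verify the constant by explicitly computing $\ox\,\oy$ for $x=a_1+b_1+a_2$ and $y=b_1+a_2+b_2$, which is symplectic and whose expansion contains the constant coming from $\hiota$-terms. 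Once $1$, all $\oh$, and all quadratic monomials lie in the image, surjectivity follows.
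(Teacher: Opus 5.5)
There is a genuine gap: you never produce the constant $1$. Your computations for the quadratic monomials and for the linear terms $\oa_j,\ob_j$ are correct, but every explicit attempt at the constant fails. The final pair you propose, $x=a_1+b_1+a_2$ and $y=b_1+a_2+b_2$, is not symplectic: $\hiota(x,y)=\hiota(a_1,b_1)+\hiota(a_2,b_2)=1+1=0$, so $\Span{x,y}$ is isotropic and there is no generator $\SymSub{\Span{x,y}}$ to apply $\beta$ to. Falling back on $\SymSub{H}$ does not help either, since $\beta(\SymSub{H})=\kappa=\oa_1\ob_1+\cdots+\oa_g\ob_g$ has no constant term. The sentence ``if direct choices keep failing, we fall back on\ldots'' is not an argument.

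The missing observation is where a constant can come from. If $x$ is the sum of a subset of the standard basis, then $\ox$ is the sum of the corresponding $\overline{s}$ plus $c(x)\in\bbF_2$. Here $c(x)$ is the number of indices $i$ such that both $a_i$ and $b_i$ occur in $x$. Hence the constant term of $\ox\,\oy$ is $c(x)c(y)$, so you need $c(x)=c(y)=1$ together with $\hiota(x,y)=1$. For example, $x=a_1+b_1$ and $y=a_1+a_2+b_2$ satisfy $\hiota(x,y)=\hiota(b_1,a_1)=1$, and
\[\beta(\SymSub{\Span{x,y}})=(\oa_1+\ob_1+1)(\oa_1+\oa_2+\ob_2+1)=1+\oa_1+\ob_1+\oa_2+\ob_2+\oa_1\ob_1+\oa_1\oa_2+\oa_1\ob_2+\ob_1\oa_2+\ob_1\ob_2.\]
So $1$ lies in the image, because all the other terms already do; this again uses $g\geq 2$. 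With this fix your argument is complete. It is then a self-contained, explicit replacement for what the paper does. The paper simply cites Johnson's proof that $\sigma(\cK_g^1)=\BCJ_2(g)$, noting that this proof uses only the formula for $\sigma$ of a separating twist, which is exactly the formula defining $\beta$ on generators.
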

\begin{proof}
Johnson \cite{JohnsonBCJ} proved that the ordinary BCJ homomorphism $\sigma\colon \cK_g^1 \rightarrow \BCJ_2(g)$
is surjective for $g \geq 2$.  His proof of this only uses the formula for the image under
$\sigma$ of a separating twist.  The lemma follows.
\end{proof}

\section{Theorem \texorpdfstring{\ref{maintheorem:qkgpres}}{A''} when \texorpdfstring{$g=2$}{g=2} and \texorpdfstring{$g=3$}{g=3}}
\label{section:qkgpresgenus23}

Recall that Theorem~\ref{maintheorem:qkgpres} says that the lifted BCJ homomorphism
$\beta\colon \Pres_g^1 \rightarrow \BCJ_2(g)$ is an isomorphism for $g \geq 3$.  For
the sake of our induction, we will actually prove it for $g \geq 2$.  This section establishes
it for the base cases $g=2$ and $g=3$.

\subsection{Genus two}

We start with $g=2$:

\begin{lemma}
\label{lemma:qkgpresg2}
The lifted BCJ homomorphism $\beta\colon \Pres_2^1 \rightarrow \BCJ_2(2)$ is an isomorphism.
\end{lemma}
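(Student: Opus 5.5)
Surjectivity of $\beta$ is Lemma~\ref{lemma:betasurjective}, and $\dim \BCJ_2(2) = 2\cdot 4+2+1 = 11$. It therefore suffices to show that $\Pres_2^1$ is spanned by at most $11$ elements; a surjection from a space of dimension at most $11$ onto an $11$-dimensional space is then an isomorphism. The work is entirely in bounding the dimension of $\Pres_2^1$ using its defining relations.

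First, the nonzero symplectic subspaces of $H \cong \bbF_2^4$ have genus $1$ or $2$. The only genus-$2$ one is $H$ itself. Every genus-$1$ subspace $V$ has $V^{\perp}$ of genus $1$, and the relation gives $\SymSub{H} = \SymSub{V} + \SymSub{V^{\perp}}$. Hence $\Pres_2^1$ is spanned by the genus-$1$ symbols $\SymSub{V}$, and it satisfies the relations $\SymSub{V} + \SymSub{V^{\perp}} = \SymSub{V'} + \SymSub{V'^{\perp}}$ for all genus-$1$ $V,V'$.

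Next I count the genus-$1$ subspaces. A genus-$1$ subspace is spanned by a pair $x,y$ with $\hiota(x,y)=1$. There are $15$ nonzero $x$, and for each there are $8$ vectors $y$ with $\hiota(x,y)=1$. Each plane contains $3$ nonzero vectors, giving $3\cdot 2 = 6$ ordered such pairs. So there are $15\cdot 8/6 = 20$ genus-$1$ subspaces, and they form $10$ orthogonal pairs $\{V,V^{\perp}\}$.

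The relations say all $10$ pair-sums $\SymSub{V}+\SymSub{V^{\perp}}$ are equal to the common value $\SymSub{H}$. This imposes $9$ independent linear conditions on the $20$ generators, since each condition involves a distinct new pair. The span therefore has dimension at most $20-9 = 11$: choose one $V$ from each pair, and those $10$ together with $\SymSub{H}$ span. Combined with surjectivity, $\beta$ is an isomorphism.

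The only real obstacle is making sure no relations exist beyond those listed. That does not matter for this argument, because only an upper bound on the dimension is needed. The remaining checks are routine: that genus-$1$ orthogonal complements are again genus $1$, and the count of $20$ subspaces. If the count were off, I would instead verify that the images $\ox\oy$ of the $10$ chosen representatives, together with $\kappa$, span $\BCJ_2(2)$, though surjectivity already gives this.
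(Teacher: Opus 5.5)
Your proof is correct and follows essentially the same route as the paper: you get surjectivity from Lemma~\ref{lemma:betasurjective}, and you bound $\dim \Pres_2^1 \le 11$ by pairing off the $20$ genus-$1$ generators via the relations $\SymSub{V}+\SymSub{V^{\perp}}=\SymSub{H}$. The only cosmetic difference is how you count the $20$ genus-$1$ subspaces: you count ordered pairs $(x,y)$ with $\hiota(x,y)=1$, whereas the paper uses the orbit--stabilizer computation $|\Sp_4(\bbF_2)|/|\Sp_2(\bbF_2)|^2$.
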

\begin{proof}
The dimension of $\BCJ_2(2)$ is
\[\binom{4}{0} + \binom{4}{1} + \binom{4}{2} = 1 + 4 + 6 = 11.\]
Since $\beta$ is surjective (Lemma~\ref{lemma:betasurjective}), it is enough to prove that
$\Pres_2^1$ has dimension at most $11$.  The $\bbF_2$-vector space $\Pres_2^1$ has two kinds of
generators:
\begin{itemize}
\item Letting $H = \HH_1(\Sigma_2^1;\bbF_2)$, a single generator $\SymSub{H}$.
\item For each genus-$1$ symplectic subspace $V \cong \bbF^2$ of $H$, a generator $\SymSub{V}$.  The
group $\Sp_{4}(\bbF_2)$ acts transitively on such symplectic subspaces, and the stabilizer
of $V$ is isomorphic to $\Sp_2(\bbF_2) \times \Sp_2(\bbF_2)$ since the stabilizer preserves the decomposition
$H = V \oplus V^{\perp}$.  It follows that
there are\footnote{Here we are using the standard fact
that for $g \geq 1$ and $p \geq 2$ a prime we have $|\Sp_{2g}(\bbF_p)| = (p^{2g}-1)p^{2g-1}(p^{2g-2}-1)p^{2g-3} \cdots (p^2-1)p$.}
\[\frac{|\Sp_{4}(\bbF_2)|}{|\Sp_2(\bbF_2)|^2} = \frac{(2^4-1)2^3(2^2-1)2}{((2^2-1)2)^2} 
= \frac{15 \times 8 \times 3 \times 2}{6^2} = \frac{720}{36} = 20\]
such generators.
\end{itemize}
For each genus-$1$ symplectic subspace $V$ of $H$, we have a single relation
\[\SymSub{V} + \SymSub{V^{\perp}} = \SymSub{H}.\]
These are all the relations in $\Pres_2^1$.  They allow us to eliminate exactly one of $\SymSub{V}$ or $\SymSub{V^{\perp}}$ for each
genus-$1$ symplectic subspace $V$.  After doing this, half of the 20 generators $\SymSub{V}$ survive.  We conclude that
the dimension of $\Pres_2^1$ is at most $10 + 1 = 11$, as desired.
\end{proof}

\subsection{Genus three}

We next handle the case $g=3$:

\begin{lemma}
\label{lemma:qkgpresg3}
The lifted BCJ homomorphism $\beta\colon \Pres_3^1 \rightarrow \BCJ_2(3)$ is an isomorphism.
\end{lemma}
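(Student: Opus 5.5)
Since $\beta$ is surjective by Lemma~\ref{lemma:betasurjective}, and $\BCJ_2(3)$ has dimension $2\cdot 9+3+1=22$, it suffices to prove that $\dim \Pres_3^1 \leq 22$. The plan is to reduce the presentation of $\Pres_3^1$ to a spanning set of size $22$. The paper mentions a computer calculation for the genus-$3$ base case, and I would organize the argument so that it could be checked by machine if the hand reduction becomes unwieldy.

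First, I would use the relations to eliminate all generators of genus $\geq 2$. Let $H=\HH_1(\Sigma_3^1;\bbF_2)$. For a genus-$2$ subspace $W$, the relation $\SymSub{W^\perp}+\SymSub{W}=\SymSub{H}$ expresses $\SymSub{W}$ through $\SymSub{H}$ and the genus-$1$ generator $\SymSub{W^\perp}$. For $H$ itself, decomposing $H=V_1\oplus V_2\oplus V_3$ into orthogonal genus-$1$ subspaces gives $\SymSub{H}=\SymSub{V_1}+\SymSub{V_2}+\SymSub{V_3}$. Hence $\Pres_3^1$ is spanned by the genus-$1$ generators $\SymSub{V}$. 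There are $|\Sp_6(\bbF_2)|/(|\Sp_2(\bbF_2)|\,|\Sp_4(\bbF_2)|)=1451520/(6\cdot 720)=336$ of them.

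The remaining relations among these generators come from comparing different ways of splitting genus-$2$ subspaces and $H$:
\begin{itemize}
\item For each genus-$2$ subspace $W$ and each pair of genus-$1$ decompositions $V\oplus V^{\perp_W}=V'\oplus V'^{\perp_W}$ of $W$, we get $\SymSub{V}+\SymSub{V^{\perp_W}}=\SymSub{V'}+\SymSub{V'^{\perp_W}}$.
\item The value $\SymSub{H}=\SymSub{V_1}+\SymSub{V_2}+\SymSub{V_3}$ must be independent of the orthogonal frame. This is already implied by the genus-$2$ relations together with $\SymSub{W}=\SymSub{H}+\SymSub{W^\perp}$, and the consistency of those identities with the frame sums must be imposed.
\end{itemize}

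Concretely, $\Pres_3^1$ is the quotient of $\bbF_2^{336}$ by the span of the vectors
$$e_{V_1}+e_{V_2}+e_{V_3}+e_{V'_1}+e_{V'_2}+e_{V'_3}$$
over pairs of orthogonal frames sharing a member. Equivalently, in terms of the $336$ genus-$1$ generators plus the $\SymSub{W}$ and $\SymSub{H}$, one imposes every relation $\SymSub{V}+\SymSub{W}=\SymSub{V\oplus W}$. The task is to show this quotient has dimension at most $22$.

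I would attempt the bound structurally. Fix a standard genus-$1$ subspace $V_0$. The genus-$2$ relations inside $V_0^\perp$ reproduce the genus-$2$ computation of Lemma~\ref{lemma:qkgpresg2}: modulo $\SymSub{V_0^\perp}$, the sums $\SymSub{V}+\SymSub{V^{\perp}}$ inside $V_0^\perp$ are all equal. One would then try to show that every genus-$1$ subspace can be reached from a fixed small set of $22$ representatives, for instance those adapted to a symplectic basis, such as $\Span{a_i,b_i}$, $\Span{a_i+a_j,b_i}$, and similar ones. This would use chains of such relations, which amounts to a row-reduction guided by the $\Sp_6(\bbF_2)$-orbits.

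The main obstacle is this counting step. Organizing the $336$ generators and the many relations by hand is error-prone, and the spanning set of size $22$ is not obvious. The approach I expect to work reliably is therefore a direct computer calculation:
\begin{enumerate}
\item enumerate all nonzero symplectic subspaces of $\bbF_2^6$;
\item list all relations $\SymSub{V}+\SymSub{W}=\SymSub{V\oplus W}$ for orthogonal pairs;
\item compute over $\bbF_2$ the rank of the relation matrix, and check that the number of generators minus this rank equals $22$.
\end{enumerate}
Combined with surjectivity, this proves that $\beta$ is an isomorphism.
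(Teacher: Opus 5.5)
Your proposal is correct and follows essentially the paper's route. Surjectivity of $\beta$ (Lemma~\ref{lemma:betasurjective}) reduces the claim to $\dim \Pres_3^1 \leq 22$, and this bound is certified by an $\bbF_2$ rank computation on a computer once the $336$ genus-$1$ generators are seen to span. The paper makes the computation smaller by noting $\beta(\SymSub{H}) \neq 0$ and quotienting by $\SymSub{H}$. After that, only the $1120$ frame relations $\SymSub{V_1}+\SymSub{V_2}+\SymSub{V_3}=0$ on the $336$ genus-$1$ generators are needed, with rank $315$ and dimension $21$, whereas your brute-force matrix on all $673$ generators should have rank $651$. Your ``frames sharing a member'' quotient also needs no connectivity check, since it surjects onto $\Pres_3^1$ and any dimension bound for it suffices.
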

\begin{proof}
The dimension of $\BCJ_2(3)$ is
\[\binom{6}{0} + \binom{6}{1} + \binom{6}{2} = 1 + 6 + 15 = 22.\]
Since $\beta$ is surjective (Lemma~\ref{lemma:betasurjective}), it is enough to prove that
$\Pres_3^1$ is at most $22$-dimensional.  As we will explain below, we verified this on a computer.
The $\bbF_2$-vector space $\Pres_3^1$ has three kinds of generators:
\begin{itemize}
\item Letting $H = \HH_1(\Sigma_3^1;\bbF_2)$, a single generator $\SymSub{H}$.
\item For each genus-$1$ symplectic subspace $V \cong \bbF^2$ of $H$, a generator $\SymSub{V}$.  The
group $\Sp_{6}(\bbF_2)$ acts transitively on such symplectic subspaces, and the stabilizer
of $V$ is isomorphic to $\Sp_2(\bbF_2) \times \Sp_4(\bbF_2)$ since the stabilizer preserves the decomposition
$H = V \oplus V^{\perp}$.  It follows that
there are
\begin{align*}
\frac{|\Sp_{6}(\bbF_2)|}{|\Sp_2(\bbF_2)| \times |\Sp_4(\bbF_2)|} &= \frac{(2^6-1)2^5(2^4-1)2^3(2^2-1)2}{(2^2-1)2 \times (2^4-1)2^3(2^2-1)2} \\
&= \frac{63 \times 32 \times 15 \times 8 \times 3 \times 2}{3 \times 2 \times 15 \times 8 \times 3 \times 2} = \frac{1451520}{4320} = 336
\end{align*}
such generators. 
\item For each genus-$2$ symplectic subspace $W \cong \bbF^4$ of $H$, a generator $\SymSub{W}$.  Since
$W^{\perp}$ is a genus-$1$ symplectic subspace, these are in bijection with the genus-$1$ symplectic subspaces.
We therefore conclude that there are $336$ such generators.
\end{itemize}
Letting $\{a_1,b_1,a_2,b_2,a_3,b_3\}$ be a symplectic basis for $H$, we have
\[\beta(\SymSub{H}) = \oa_1 \ob_1 + \oa_2 \ob_2 + \oa_3 \ob_3 \neq 0.\]
It follows that $\SymSub{H}$ is nonzero and spans a $1$-dimensional subspace
of $\Pres_3^1$.  Let $\Lambda$ be the quotient of $\Pres_3^1$ by the subspace spanned by $\SymSub{H}$.
It is enough to prove that $\Lambda$ is at most $21$-dimensional.  For $x \in \Pres_3^1$, let
$\ux$ be the image of $x$ in $\Lambda$.

For a genus-$2$ symplectic subspace $W$ of $H$, we have a relation
\[\uSymSub{W} + \uSymSub{W^{\perp}} = \uSymSub{W \oplus W^{\perp}} = \uSymSub{H} = 0\]
in $\Lambda$.  Since $W^{\perp}$ is a genus-$1$ symplectic subspace of $H$, this allows us to eliminate all the generators
$\uSymSub{W}$.  In other words, $\Lambda$ is spanned by the generators $\uSymSub{V}$ as $V$ ranges over the 336 genus-$1$
symplectic subspaces of $H$.

For an orthogonal decomposition $H = V_1 \oplus V_2 \oplus V_3$ with each $V_i$ a genus-$1$ symplectic subspace of $H$, we have
a relation
\[\uSymSub{V_1} + \uSymSub{V_2} + \uSymSub{V_3} = \uSymSub{V_1 \oplus V_2} + \uSymSub{V_3} = \uSymSub{V_1 \oplus V_2 \oplus V_3} = \uSymSub{H} = 0\]
in $\Lambda$.  Let $\Lambda'$ be the $\bbF_2$-vector space with the following presentation:
\begin{itemize}
\item The generators of $\Lambda'$ are formal symbols $\LGen{V}$ with $V$ a genus-$1$
symplectic subspace of $H$.
\item The relations of $\Lambda'$ are as follows.  For each orthogonal decomposition $H = V_1 \oplus V_2 \oplus V_3$ with the $V_i$ genus-$1$ symplectic subspaces of $H$,
we have a relation $\LGen{V_1} + \LGen{V_2} + \LGen{V_3} = 0$.
\end{itemize}
We have a surjection $\Lambda' \rightarrow \Lambda$ taking a generator $\LGen{V}$ to $\uSymSub{V}$, so it is enough
to prove that $\Lambda'$ has dimension at most $21$.

As we observed above, there are 336 generators $\LGen{V}$ of $\Lambda'$.  The group $\Sp_{6}(\bbF_2)$ acts transitively
on (unordered) orthogonal decompositions $H = V_1 \oplus V_2 \oplus V_3$ with each $V_i$ a genus-$1$ symplectic subspace of $H$.
The stabilizer of one such decomposition is isomorphic to an extension of $\Sp_2(\bbF_2) \times \Sp_2(\bbF_2) \times \Sp_2(\bbF_2)$
by the symmetric group $\fS_3$ on $3$ letters.\footnote{The symmetric group $\fS_3$ appears since we are considering
unordered decompositions.  Reordering the $V_i$ does not change the relation.}  It follows that the following counts the number of
relations in $\Lambda'$:
\begin{align*}
\frac{|\Sp_{6}(\bbF_2)|}{|\Sp_2(\bbF_2)|^3 \times |\fS_3|} &= \frac{(2^6-1)2^5(2^4-1)2^3(2^2-1)2}{((2^2-1)2)^3 \times 6} \\
&= \frac{63 \times 32 \times 15 \times 8 \times 3 \times 2}{6^3 \times 6} = \frac{1451520}{1296} = 1120.
\end{align*}
Using software written in C++ by the authors and available on their websites, we enumerated these 336 generators and 1120
relations as follows:
\begin{enumerate}
\item Each two-dimensional subspace of $H \cong \bbF_2^6$ can be uniquely represented as the row 
space of a $2 \times 6$ matrix $M$ over $\bbF_2$ that is in reduced row echelon form.  In other words,
for some $1 \leq p < q \leq 6$ we have:
\begin{itemize}
\item The first nonzero entry of row $1$ appears in position $p$ and the first nonzero entry of row $2$ appears in position $q$; and
\item The entry in position $q$ of row $1$ is $0$.
\end{itemize}
Here are some examples of matrices with this shape:
\[
\left(\begin{matrix} 1 & 0 & \ast & \ast & \ast & \ast \\
                     0 & 1 & \ast & \ast & \ast & \ast \end{matrix}\right)
\quad \text{and} \quad
\left(\begin{matrix} 0 & 1 & \ast & 0 & \ast & \ast \\
                       0 & 0 & 0    & 1 & \ast & \ast \end{matrix}\right).
\]
Such a matrix represents a genus-$1$ symplectic subspace if and only if the algebraic intersection pairing between its rows is $1 \in \bbF_2$.
\item The program first enumerates all possible matrices as in (a).  For each, it checks whether the
algebraic intersection pairing between its rows is $1$.  If so, it adds it to the list of generators.
\item For each $1 \leq i < j < k \leq 336$, the program then checks to see if the $i^{\text{th}}$ and $j^{\text{th}}$ and $k^{\text{th}}$ generators
represent pairwise orthogonal genus-$1$ symplectic subspaces.  If so, it adds a relation to the list of relations.
\end{enumerate}
The program outputs a $336 \times 1120$ matrix with entries in $\bbF_2$.  Each column has exactly three nonzero entries.  It then calculates
the dimension of the column space of this matrix, which turns out to be $315$.  It follows that the dimension of $\Lambda'$ is $336-315=21$, as desired.
\end{proof}

\section{Inductive presentation of \texorpdfstring{$\BCJ_2(g)$}{BCJ2}}
\label{section:inductivebcj}

Let $g \geq 2$, and set $H = \HH_1(\Sigma_g^1;\bbF_2)$.
This section introduces an inductive description of $\BCJ_2(g)$.  Our proof of the general case
of Theorem~\ref{maintheorem:qkgpres} in the next section proceeds by showing that
$\Pres_g^1$ has a similar inductive description for $g \geq 4$.

\subsection{Notation for a generalization of \texorpdfstring{$\BCJ_n(g)$}{BCJn}}

Let $U$ be an $\bbF_2$-vector space equipped with a symplectic form $\hiota(-,-)$; for instance,
$U$ might be a symplectic subspace of $H$.  Generalizing our construction of $\BCJ(g)$,
we define $\BCJ(U)$ to be the quotient of the ring $\bbF_2[\oU]$
of polynomials in the formal variables $\Set{$\ou$}{$u \in U$}$
by the ideal generated by:
\begin{itemize}
\item $f^2-f$ for $f \in \bbF_2[\oU]$; and
\item $\overline{u_1+u_2} - (\ou_1 + \ou_2 + \hiota(u_1,u_2))$ for all $u_1,u_2 \in U$.
\end{itemize}
Letting $h$ be the genus of $U$, this has a filtration
\[0 = \BCJ_{-1}(U) \subsetneq \BCJ_0(U) \subsetneq \BCJ_1(U) \subsetneq \cdots \subsetneq \BCJ_{2h}(U) = \BCJ(U),\]
where $\BCJ_n(U)$ is the image in $\BCJ(U)$ of the set of polynomials
in $\bbF_2[\oU]$ of degree at most $n$.

This construction is functorial in the following sense.  Let $U$ and $U'$ be $\bbF_2$-vector
spaces equipped with symplectic forms and let $\iota\colon U \rightarrow U'$ be
a linear map preserving the symplectic form.  This implies that $\iota$ is injective.
The map $\iota\colon U \rightarrow U'$ induces
a map $\BCJ_n(U) \rightarrow \BCJ_n(U')$ that is also easily seen to be injective.
We will use this functoriality frequently when $U$ is a symplectic subspace of $U'$,
in which case the map $\iota\colon U \rightarrow U'$ is the inclusion.

\subsection{Standard symplectic basis and deleted subspaces}
\label{section:standardspdecomp}

Fix a symplectic basis $\{a_1,b_1,\ldots,a_g,b_g\}$ for $H$, which we will call
the {\em standard symplectic basis}.  For $1 \leq i \leq g$, let
$H(i) = \Span{a_i,b_i}$.  This is a genus-$1$ symplectic subspace of $H$, and
we have an orthogonal decomposition
\[H = H(1) \oplus H(2) \oplus \cdots \oplus H(g).\]
For distinct $1 \leq i_1,\ldots,i_k \leq g$, let $H(\hsi_1,\ldots,\hi_k)$
be the subspace of $H$ obtained by deleting the terms $H(i_j)$ from the
above decomposition, so
\[H(\hsi_1,\ldots,\hi_k) = \bigoplus_{\substack{1 \leq i \leq g \\ i \neq i_1,\ldots,i_k}} H(i).\]
Finally, define 
\[\BCJ_n(g;\hi_1,\ldots,\hi_k) = \BCJ_n(H(\hsi_1,\ldots,\hi_k)).\]
There is thus an induced injective map $\BCJ_n(g;\hi_1,\ldots,\hi_k) \rightarrow \BCJ_n(g)$.

\subsection{A potential presentation}
\label{section:bcjpresentation}

Since we will only need to understand $\BCJ_2(g)$, we now restrict to this case.
We start with the map
\[\epsilon\colon \bigoplus_{1 \leq i \leq g} \BCJ_2(g;\his) \rightarrow \BCJ_2(g)\]
induced by the inclusions.  Our goal is to determine the kernel of this map.
Consider some $1 \leq j < k \leq g$.  Let
$f'\colon \BCJ_2(g;\hj,\hks) \rightarrow \BCJ_2(g;\hjs)$ and
$f''\colon \BCJ_2(g;\hj,\hks) \rightarrow \BCJ_2(g;\hks)$ be the
two inclusions.  We have a commutative diagram
\[\begin{tikzcd}[row sep=tiny]
                                                 & \BCJ_2(g;\hjs) \arrow{rd} & \\
\BCJ_2(g;\hj,\hks) \arrow{ru}{f'} \arrow{rd}{f''} &                          & \BCJ_2(g) \\
                                                 & \BCJ_2(g;\hks) \arrow{ru} & 
\end{tikzcd}\]   
Let $d_{jk}\colon \BCJ_2(g;\hj,\hks) \rightarrow \oplus_{i=1}^g \BCJ_2(g;\his)$ be
the composition of the map 
\[(f',f'')\colon \BCJ_2(g;\hj,\hks) \rightarrow \BCJ_2(g;\hjs) \oplus \BCJ_2(g;\hks)\]
with the inclusion of the two-term direct sum into the $g$-term direct sum, and let
\[d = \bigoplus_{1 \leq j < k \leq g} d_{jk} \colon \bigoplus_{1 \leq j<k \leq g} \BCJ_2(g;\hj,\hks)
\rightarrow \bigoplus_{1 \leq i \leq g} \BCJ_2(g;\his).\]
Since we are working over $\bbF_2$, the image of $d$ is contained in the kernel of $\epsilon$, i.e., the composition
\[\bigoplus_{1 \leq j<k \leq g} \BCJ_2(g;\hj,\hks) \stackrel{d}{\longrightarrow} \bigoplus_{1 \leq i \leq g} \BCJ_2(g;\his) 
\stackrel{\epsilon}{\longrightarrow} \BCJ_2(g)\]
is the zero map.  We will prove below that this is actually a presentation of $\BCJ_2(g)$, i.e., the map
$\epsilon$ is surjective and the image of $d$ is the kernel of $\epsilon$.
We remark that this is exactly the kind of presentation that appears in the theory of {\em central stability}
for sequences of representations of the symmetric group.  This was introduced 
by Putman \cite{PutmanCongruence} and further developed by Putman--Sam \cite{PutmanSamNoetherian}, and gives one
way of formalizing Church--Farb's notion of representation stability \cite{ChurchFarbRepStability}.

\subsection{The presentation}

Our result is as follows.  It is the main result in this section.

\begin{lemma}
\label{lemma:bcjpresentation}
Let the notation be as above, and assume that $g \geq 3$.  The following is then exact:
\[\bigoplus_{1 \leq j<k \leq g} \BCJ_2(g;\hj,\hks) \stackrel{d}{\longrightarrow} \bigoplus_{1 \leq i \leq g} \BCJ_2(g;\his) 
\stackrel{\epsilon}{\longrightarrow} \BCJ_2(g) \rightarrow 0.\]
\end{lemma}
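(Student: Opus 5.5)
The plan is to reduce everything to a monomial-by-monomial computation using the canonical square-free bases, so that the sequence splits as a direct sum of very small augmented chain complexes.

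First, recall from \S\ref{section:bcjhomomorphism} that once a symplectic basis is fixed, elements of $\BCJ(U)$ are uniquely represented by square-free polynomials in the barred basis vectors. Apply this to $H(\hsi_1,\ldots,\hi_k)$ with the basis $\{a_i,b_i : i \neq i_1,\ldots,i_k\}$ inherited from the standard symplectic basis. Then $\BCJ_2(g;\hi_1,\ldots,\hi_k)$ has as a basis the square-free monomials of degree at most $2$ in the variables $\oa_i,\ob_i$ with $i \notin \{i_1,\ldots,i_k\}$. In particular $1$ is one of these basis monomials.

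The induced inclusion maps $\BCJ_2(g;\hj,\hks) \to \BCJ_2(g;\hjs)$, $\BCJ_2(g;\his) \to \BCJ_2(g)$, etc.\ send each variable $\oa_i$ (resp.\ $\ob_i$) to the variable of the same name. Hence they send each basis monomial to the identically named basis monomial in the target. This is the one point to check carefully: because the inclusions preserve the chosen bases, no reduction via the relation $\overline{h_1+h_2} = \oh_1+\oh_2+\hiota(h_1,h_2)$ ever occurs.

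Next, for a square-free monomial $m$ of degree at most $2$, let $I(m) \subset \{1,\ldots,g\}$ be the set of indices $i$ such that $\oa_i$ or $\ob_i$ divides $m$. Then $|I(m)| \leq 2$. The monomial $m$ lies in $\BCJ_2(g;\his)$ exactly when $i \notin I(m)$, and in $\BCJ_2(g;\hj,\hks)$ exactly when $j,k \notin I(m)$. Since $d$ and $\epsilon$ are sums of the basis-preserving inclusions, the whole sequence is the direct sum, over all such monomials $m$, of the subcomplexes spanned by the copies of $m$.

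Set $C = \{1,\ldots,g\} \setminus I(m)$. The summand corresponding to $m$ is
\[\bigoplus_{\substack{j<k\\ j,k \in C}} \bbF_2 \longrightarrow \bigoplus_{i \in C} \bbF_2 \longrightarrow \bbF_2 \longrightarrow 0,\]
where the first map sends the generator indexed by $\{j,k\}$ to $e_j + e_k$, and the second sends each $e_i$ to $1$. This is precisely the augmented simplicial chain complex (in degrees $1,0,-1$, with $\bbF_2$-coefficients) of the full simplex on the vertex set $C$.

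It remains to check exactness of each summand, using $g \geq 3$, which gives $|C| \geq g-2 \geq 1$:
\begin{itemize}
\item \emph{Surjectivity of $\epsilon$.} Since $C \neq \emptyset$, some $e_i$ maps to $1$, so each summand of $\epsilon$ is onto.
\item \emph{Exactness in the middle.} The kernel of the augmentation consists of sums of an even number of the $e_i$. Such a sum is a sum of terms $e_i + e_{i'}$, each of which is the image of the edge $\{i,i'\}$. Equivalently, the reduced $H_0$ of a simplex vanishes.
\end{itemize}

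The only genuine obstacle is the bookkeeping claim that the maps respect the monomial bases. Once that is in place, the argument is purely combinatorial. The hypothesis $g \geq 3$ is used exactly to guarantee that $C$ is nonempty when $|I(m)| = 2$.
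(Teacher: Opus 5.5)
Your proposal is correct and follows essentially the same approach as the paper: both split the sequence along the square-free monomial basis of $\BCJ_2(g)$, use $g \geq 3$ to see that each monomial of degree at most $2$ avoids some $H(i)$ (giving surjectivity of $\epsilon$), and then check $\Image(d) = \ker(\epsilon)$ one monomial at a time. Viewing the piece for each monomial as the augmented chain complex of a simplex on $C$ just spells out the step the paper compresses into ``quotienting by the image of $d$ identifies the copies of $\mu$.''
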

\begin{proof}
We must prove that $\epsilon$ is surjective and that $\Image(d) = \ker(\epsilon)$.
Let $H = \HH_1(\Sigma_g^1;\bbF_2)$ and let $S = \{a_1,b_1,\ldots,a_g,b_g\}$ be the standard symplectic basis
for $H$.  Define $\oS = \{\oa_1,\ob_1,\ldots,\oa_g,\ob_g\}$, and set
\[\oS_{\leq 2} = \{1\} \cup \oS \cup \Set{$\os \ot$}{$\os,\ot \in \oS$ distinct}.\]
The set $\oS_{\leq 2}$ is a basis for the $\bbF_2$-vector space $\BCJ_2(g)$.  For $1 \leq i_1 < \cdots < i_k \leq g$, set
\begin{align*}
\oS(\hsi_1,\ldots,\hi_k) &= \oS \setminus \{\oa_{i_1},\ob_{i_1},\ldots,\oa_{i_k},\ob_{i_k}\} \\
\oS_{\leq 2}(\hsi_1,\ldots,\hi_k)   &= \{1\} \cup \oS(\hsi_1,\ldots,\hi_k) \cup \Set{$\os \ot$}{$\os,\ot \in \oS(\hsi_1,\ldots,\hi_k)$ distinct}.
\end{align*}
The set $\oS_{\leq 2}(\hsi_1,\ldots,\hi_k)$ is a basis for the $\bbF_2$-vector space $\BCJ_2(g;\hi_1,\ldots,\hi_k)$.  

Since $g \geq 3$, each element of $\oS_{\leq 2}$ lies in $\oS_{\leq 2}(\hsis)$ for some $1 \leq i \leq g$.  This
implies that $\epsilon$ is surjective.  Moreover, if $\mu \in \oS_{\leq 2}$ lies in
$\oS_{\leq 2}(\hsjs)$ and $\oS_{\leq 2}(\hsks)$ for some distinct $1 \leq j < k \leq g$, then $\mu \in \oS_{\leq 2}(\hsj,\hks)$ and
\[d(\mu) = (\mu,\mu) \in \BCJ_2(g;\hjs) \oplus \BCJ_2(g;\hks) \subset \bigoplus_{1 \leq i \leq g} \BCJ_2(g;\his).\]
Since we are working over $\bbF_2$, we have $\mu = -\mu$.  It follows that quotienting by the image of $d$
identifies $\mu \in \BCJ_2(g;\hjs)$ and $\mu \in \BCJ_2(g;\hks)$.  We conclude that $\Image(d) = \ker(\epsilon)$.
\end{proof}

\section{Proof of Theorem \texorpdfstring{\ref{maintheorem:qkgpres}}{A''}}
\label{section:qkgpresproof}

We close this paper by proving Theorem~\ref{maintheorem:qkgpres}, whose statement
we recall below.\footnote{This implies our main results.  Indeed, in \S \ref{section:theoremaprimeproof} of Part~\ref{part:2} we
proved that Theorem~\ref{maintheorem:qkgpres} implies Theorem~\ref{maintheorem:qkg},
and in \S \ref{section:theoremaproof} -- \S \ref{section:theorembproof} of Part~\ref{part:1} we proved that
Theorem~\ref{maintheorem:qkg} implies Theorems~\ref{maintheorem:genker} and \ref{maintheorem:gencomm}.}  
We remark that the proof of Theorem~\ref{maintheorem:qkgpres} uses a proof
strategy introduced by Minahan--Putman \cite{MinahanPutmanRep}.

\newtheorem*{maintheorem:qkgpres}{Theorem~\ref{maintheorem:qkgpres}}
\begin{maintheorem:qkgpres}
For $g \geq 3$, the lifted BCJ homomorphism $\beta\colon \Pres_g^1 \rightarrow \BCJ_2(g)$ is
an isomorphism.
\end{maintheorem:qkgpres}
\begin{proof}
In fact, we will prove that this holds for $g \geq 2$.
The proof will be by induction on $g$.  The base cases $g=2$ and $g=3$ 
are Lemmas~\ref{lemma:qkgpresg2} and \ref{lemma:qkgpresg3}.  Assume, therefore, that
$g \geq 4$ and that the theorem is true in all genera between $2$ and $g-1$.  We will
prove that $\Pres_g^1$ has a presentation similar to the one we constructed for $\BCJ_2(g)$
in Lemma~\ref{lemma:bcjpresentation}.  Our inductive hypothesis will then allow
us to identify the terms in this presentation with those for $\BCJ_2(g)$.  

This requires some notation.
Let $U$ be an $\bbF_2$-vector space equipped with a symplectic form; for instance,
$U$ might be a symplectic subspace of $H = \HH_1(\Sigma_g^1;\bbF_2)$.  Generalizing our construction of $\Pres_g^1$ from
\S \ref{section:liftedbcj}, define $\Pres(U)$ to be the $\bbF_2$-vector space given by generators and relations as follows:
\begin{itemize}
\item The generators of $\Pres(U)$ are formal symbols $\SymSub{V}$ with $V < U$ a nonzero
symplectic subspace.
\item The relations of $\Pres(U)$ are as follows.  Let $V,W < U$ be nonzero orthogonal symplectic subspaces.
We then have the relation $\SymSub{V} + \SymSub{W} = \SymSub{V \oplus W}$.
\end{itemize}
In Lemma \ref{lemma:liftedbcjcalc}, we gave a formula for calculating the image of a generator
under the lifted BCJ homomorphism $\beta\colon \Pres_g^1 \rightarrow \BCJ_2(g)$.  That same formula
lets us define a lifted BCJ homomorphism
$\Pres(U) \rightarrow \BCJ_2(U)$.  In fact, if $U$ has
genus $h$ then $\Pres(U) \cong \Pres_h^1$ and $\BCJ_2(U) \cong \BCJ_2(h)$, and the lifted
BCJ homomorphism $\Pres(U) \rightarrow \BCJ_2(U)$ can be identified with the usual
lifted BCJ homomorphism $\Pres_h^1 \rightarrow \BCJ_2(h)$.

This construction is functorial in the following sense.  Let $U$ and $U'$ be $\bbF_2$-vector
spaces equipped with symplectic forms and let $\iota\colon U \rightarrow U'$ be
a linear map preserving the symplectic form.  This implies that $\iota$ is injective.
The map $\iota\colon U \rightarrow U'$ induces
a map $\Pres(U) \rightarrow \Pres(U')$.  Unlike for $\BCJ_2(U)$, this induced map
is not obviously injective.

As in \S \ref{section:standardspdecomp},
let $\{a_1,b_1,\ldots,a_g,b_g\}$ be the standard symplectic basis for $H = \HH_1(\Sigma_g^1;\bbF_2)$.
For $1 \leq i \leq g$, let $H(i) = \Span{a_i,b_i}$.  This is a genus-$1$ symplectic subspace of $H$, and
we have an orthogonal decomposition
\[H = H(1) \oplus H(2) \oplus \cdots \oplus H(g).\]
For distinct $1 \leq i_1,\ldots,i_k \leq g$, let $H(\hsi_1,\ldots,\hi_k)$
be the subspace of $H$ obtained by deleting the terms $H(i_j)$ from the
above decomposition, so 
\[H(\hsi_1,\ldots,\hi_k) = \bigoplus_{\substack{1 \leq i \leq g \\ i \neq i_1,\ldots,i_k}} H(i).\]
Finally, define
\[\Pres_g^1(\hsi_1,\ldots,\hi_k) = \Pres(H(\hsi_1,\ldots,\hi_k)).\]
Exactly like in \S \ref{section:bcjpresentation}, the maps induced by the various
inclusion maps fit together into maps
\[\bigoplus_{1 \leq j<k \leq g} \Pres_g^1(\hsj,\hks) \stackrel{D}{\longrightarrow} \bigoplus_{1 \leq i \leq g} \Pres_g^1(\hsis) 
\stackrel{E}{\longrightarrow} \Pres_g^1\]
such that the following diagram commutes:
\[\begin{tikzcd}
\displaystyle{\bigoplus_{1 \leq j<k \leq g} \Pres_g^1(\hsj,\hks)} \arrow{r}{D} \arrow{d}{\beta'} &
\displaystyle{\bigoplus_{1 \leq i \leq g} \Pres_g^1(\hsis)} \arrow{r}{E} \arrow{d}{\beta''} &
\Pres_g^1 \arrow{d}{\beta} 
& \\
\displaystyle{\bigoplus_{1 \leq j<k \leq g} \BCJ_2(g;\hj,\hks)} \arrow{r}{d} & 
\displaystyle{\bigoplus_{1 \leq i \leq g} \BCJ_2(g;\his)} \arrow{r}{\epsilon} &
\BCJ_2(g) \arrow{r}
& 0.
\end{tikzcd}\]
Here the bottom row is the exact sequence from Lemma~\ref{lemma:bcjpresentation}.  The maps $\beta'$
and $\beta''$ are direct sums of the various lifted BCJ homomorphisms.  

For $1 \leq i \leq g$ we have $\Pres_g^1(\hsis) \cong \Pres_{g-1}^1$ and $\BCJ_2(g;\his) \cong \BCJ_2(g-1)$, and
for $1 \leq j < k \leq g$ we have $\Pres_g^1(\hsj,\hks) \cong \Pres_{g-2}^1$ and $\BCJ_2(g;\hj,\hks) \cong \BCJ_2(g-2)$.
Our inductive hypothesis therefore
implies that $\beta'$ and $\beta''$ are isomorphisms.  We remark that this uses the fact that
$g \geq 4$ since this is needed to ensure that $g-2 \geq 2$.  

Since the bottom row of the above diagram is exact and $\beta'$ and $\beta''$ are isomorphisms,
a diagram chase shows that to prove that
$\beta$ is an isomorphism, it is enough to prove that $E$ is surjective:

\begin{unnumberedclaim}
The map $E \colon \bigoplus_{1 \leq i \leq g} \Pres_g^1(\hsis) \rightarrow \Pres_g^1$ is surjective.
\end{unnumberedclaim}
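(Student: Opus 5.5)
The plan is to reduce the claim to a statement about decomposing symplectic subspaces of $H$. Write $I \subset \Pres_g^1$ for the image of $E$. By definition, $I$ is spanned by the generators $\SymSub{W}$ such that $W$ is a nonzero symplectic subspace contained in some $H(\hsis)$. Equivalently, $W$ is orthogonal to $H(i)$ for some $1 \leq i \leq g$. We must show every generator $\SymSub{V}$ of $\Pres_g^1$ lies in $I$.

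First, $\SymSub{H} \in I$. Indeed, $\SymSub{H} = \SymSub{H(1)} + \SymSub{H(\hsi)}$ with $i=1$. Here $H(1) \subset H(\hat{2})$ and $H(\hat{1})$ is contained in itself, so both terms lie in $I$.

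Second, the defining relations show that $\SymSub{V}$ is the sum of $\SymSub{X_j}$ over any orthogonal decomposition $V = X_1 \oplus \cdots \oplus X_h$ into genus-$1$ pieces. So it suffices to show the following. Every nonzero symplectic subspace $U$ of $H$, or at least one of $V$ and $V^{\perp}$ for each $V$, admits an orthogonal decomposition into genus-$1$ subspaces $X_j$, each orthogonal to some $H(i_j)$. Combined with $\SymSub{V} + \SymSub{V^{\perp}} = \SymSub{H} \in I$, this handles $V$ via whichever of $V$, $V^{\perp}$ is easier.

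The basic step is linear algebra. Let $U$ be symplectic of genus $k$ and fix $i$. Then $U \cap H(i)^{\perp}$ is the kernel of the projection $U \to H(i)$, so it has dimension at least $2k-2$. The radical of the restricted form on a subspace of codimension $c$ in $U$ has dimension at most $c \leq 2$. Hence $U \cap H(i)^{\perp}$ contains a genus-$1$ symplectic subspace $X$ as soon as $2k-2-2 \geq 2$, i.e.\ $k \geq 3$. Then $\SymSub{U} = \SymSub{X} + \SymSub{X^{\perp} \cap U}$, where $\SymSub{X} \in I$ and $X^{\perp} \cap U$ has genus $k-1$. Iterating, any $U$ reduces, modulo $I$, to a genus-$\leq 2$ subspace $U_2$.

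The main obstacle is the genus-$2$ case: showing $\SymSub{U_2} \in I$ for a genus-$2$ symplectic $U_2 \subset H$ when $g \geq 4$. Genus $1$ reduces to genus $2$ by splitting off a genus-$1$ piece inside a suitable genus-$2$ subspace, or via $V^{\perp}$.

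For genus $2$, I would first use the freedom in the earlier steps. When splitting off $X$ from a genus-$3$ subspace, there are many choices of $i$ and of $X$. I would try to choose them so that the leftover $U_2$ is orthogonal to some $H(i)$ outright, or has a projection to some $H(i)$ of rank $\leq 2$ with nondegenerate kernel. Since $U_2$ is $4$-dimensional inside $\bigoplus_{i=1}^g H(i)$ with $g \geq 4$, a counting argument over the $g$ projections should produce an $i$ whose kernel $U_2 \cap H(i)^{\perp}$ is $2$-dimensional. Failing that, I would use an orthogonal decomposition $U_2 = X \oplus X'$ and show one can arrange both $X$ and $X'$ to be orthogonal to (possibly different) $H(i)$'s. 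The fallback is the relation $\SymSub{U_2} = \SymSub{H} + \SymSub{U_2^{\perp}}$. Here $U_2^{\perp}$ has genus $g-2 \geq 2$, which I would try to treat by the same pigeonhole choice of indices.
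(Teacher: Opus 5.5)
Your preliminary steps are correct. $\SymSub{H}\in I$, and your codimension/radical count shows that any $\SymSub{U}$ with $U$ of genus $\geq 3$ is congruent modulo $I$ to some $\SymSub{U_2}$ with $U_2$ of genus $2$. Genus $1$ reduces to this via $U^{\perp}$.

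\textbf{The gap.} The proposal stops exactly where the content of the claim lies, and the tactics you list for genus $2$ fail. Take $g=4$ and
$V=\Span{a_1+a_2+a_3,\ b_1+b_4,\ a_1+a_4+b_2+b_3,\ b_1+b_2}$. The two listed pairs form a symplectic basis, so $V$ has genus $2$. Each projection $V\to H(i)$ is onto, and $V\cap H(\widehat{i})$ is an isotropic plane for every $i$. For $i=1,2,3,4$ it is, respectively,
$\Span{a_2+a_3+a_4+b_2+b_3,\ b_2+b_4}$, $\Span{b_1+b_4,\ a_1+a_4+b_1+b_3}$, $\Span{b_1+b_4,\ b_1+b_2}$ and $\Span{a_1+a_2+a_3,\ b_1+b_2}$.

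So no genus-$1$ subspace of $V$ is orthogonal to any $H(i)$. There is no index whose kernel is nondegenerate. There is also no splitting $V=X\oplus X'$ with both $\SymSub{X}$ and $\SymSub{X'}$ among your spanning elements of $I$.

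The fallback $\SymSub{V}=\SymSub{H}+\SymSub{V^{\perp}}$ does not help either. When $g=4$ it trades one genus-$2$ subspace for another. For $g\geq 5$ your own reduction sends $V^{\perp}$ back to a genus-$2$ subspace, so the argument is circular. Nothing in the proposal shows that this regress terminates.

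\textbf{The missing idea.} The paper never decomposes individual subspaces. Instead it shows that the image $\Lambda$ of $E$ is $\Sp_{2g}(\bbF_2)$-invariant. Since $\SymSub{H(1)}\in\Lambda$, and $\Sp_{2g}(\bbF_2)$ acts transitively on genus-$1$ subspaces, which span $\Pres_g^1$, invariance gives surjectivity.

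Invariance is checked on two generating sets:
\begin{itemize}
\item for $\Sp_{2g}(\bbF_2)$, the set $\bigcup_{j\neq j'}\Sp(H(j)\oplus H(j'))$;
\item for $\Lambda$, the elements $\SymSub{V}$ with $V\subset H(\widehat{i},\widehat{i'})$.
\end{itemize}
Any such $\phi$ either fixes $V$ or preserves some $H(\widehat{i})$ containing $V$.

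Producing that smaller generating set of $\Lambda$ is exactly where the inductive hypothesis enters. It uses that $\beta$ is an isomorphism in genera $g-1$ and $g-2$, together with Lemma~\ref{lemma:bcjpresentation}. Your argument never uses the inductive hypothesis. In the paper, that hypothesis is what absorbs the low-genus difficulty you ran into; for $g=4$ it ultimately rests on the genus-$3$ computer calculation.
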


Let $\Lambda$ be the image of $E$.  Our goal is to prove that $\Lambda = \Pres_g^1$.  Note
that $H(1)$ is a genus-$1$ symplectic subspace of $H$ such that $\SymSub{H(1)} \in \Pres_g^1(\hspace{2pt}\widehat{2}\hspace{2pt})$.
It follows that $\SymSub{H(1)} \in \Lambda$.  The group $\Sp_{2g}(\bbF_2)$ acts transitively
on genus-$1$ symplectic subspaces of $H$, so the span of the $\Sp_{2g}(\bbF_2)$-orbit of $\Lambda$
contains all $\SymSub{V}$ with $V$ a genus-$1$ symplectic subspace of $H$.  

We claim that
these generate $\Pres_g^1$.  Indeed, if $W$ is an arbitrary symplectic subspace of $H$ then
there is an orthogonal decomposition $W = V_1 \oplus \cdots \oplus V_h$ with each $V_i$
a genus-$1$ symplectic subspace, so
\[\SymSub{W} = \SymSub{V_1} + \cdots + \SymSub{V_h}.\]
It follows that the $\Sp_{2g}(\bbF_2)$-orbit of $\Lambda$ spans $\Pres_g^1$.  To prove
that $\Lambda = \Pres_g^1$, it is therefore enough to prove that
$\Sp_{2g}(\bbF_2)$ takes $\Lambda$ to $\Lambda$.  For this, it is enough to prove the following:
\begin{itemize}
\item[$(\spadesuit)$] There exist generating sets $S$ for $\Lambda$ and $T$ for $\Sp_{2g}(\bbF_2)$ such that
for $s \in S$ and $\phi \in T$ we have $\phi(s) \in \Lambda$.
\end{itemize}
We will verify this in three steps: first we will construct $S$, then we will construct $T$, and then
finally we will verify $(\spadesuit)$.

\begin{step}{1}
We construct a generating set $S$ for $\Lambda$.
\end{step}

By definition, $\Lambda$ is generated by the images of the maps
$\Pres_g^1(\hsis) \rightarrow \Pres_g^1$ as $i$ ranges over $1 \leq i \leq g$.  Consider some such $1 \leq i \leq g$.  Our inductive
hypothesis applies to $\Pres_g^1(\hsis) \cong \Pres_{g-1}^1$ and says that
$\Pres_{g}^1(\hsis) \cong \BCJ_2(g;\his)$.  Lemma~\ref{lemma:bcjpresentation} implies that
the map
\[\bigoplus_{\substack{1 \leq i' \leq g \\ i' \neq i}} \BCJ_2(g;\hi,\hips) \rightarrow \BCJ_2(g;\his)\]
is surjective.  This map fits into a commutative diagram
\[\begin{tikzcd}
\displaystyle{\bigoplus_{\substack{1 \leq i' \leq g \\ i' \neq i}} \Pres_g^1(\hsi,\hips)} \arrow{r} \arrow{d} & \Pres_g^1(\hsis) \arrow{d}{\cong} \\
\displaystyle{\bigoplus_{\substack{1 \leq i' \leq g \\ i' \neq i}} \BCJ_2(g;\hi,\hips)} \arrow[two heads]{r} & \BCJ_2(g;\his)
\end{tikzcd}\]
Since $g \geq 4$, our inductive hypothesis also applies to all the groups $\Pres_g^1(\hsi,\hips) \cong \Pres_{g-2}^1$ appearing
in this diagram.  Applying our inductive hypothesis, we see that the left-hand vertical arrow is an isomorphism.
We deduce that $\Pres_g^1(\hsis)$ is generated by the images of the maps $\Pres_g^1(\hsi,\hips) \rightarrow \Pres_g^1(\hsis)$
as $i'$ ranges over $1 \leq i' \leq g$ with $i' \neq i$.

Since $\Lambda$ is generated by the images of the maps $\Pres_g^1(\hsis) \rightarrow \Pres_g^1$ as $i$
ranges over $1 \leq i \leq g$, it follows that 
$\Lambda$ is generated by the images of the maps $\Pres_g^1(\hsi,\hips) \rightarrow \Pres_g^1$ as $i$ and $i'$ range over distinct
elements of $\{1,\ldots,g\}$.  For $1 \leq i,i' \leq g$ distinct, let
\[S(i,i') = \Set{$\SymSub{V} \in \Pres_g^1$}{$V$ a symplectic subspace of $H$ satisfying $V \subset H(\hsi,\hips)$}.\]
The set $S(i,i')$ generates the image of the map $\Pres_g^1(\hsi,\hips) \rightarrow \Pres_g^1$.  We conclude
that $\Lambda$ is generated by the set
\[S = \bigcup_{\substack{1 \leq i,i' \leq g \\ i \neq i'}} S(i,i').\]

\begin{step}{2}
We construct a generating set $T$ for $\Sp_{2g}(\bbF_2)$.
\end{step}

For a symplectic subspace $V$ of $H$, we can identify $\Sp(V)$ with the subgroup
of $\Sp_{2g}(\bbF_2)$ consisting of $\phi \in \Sp_{2g}(\bbF_2)$ that act trivially
on $V^{\perp}$.  Let
\[T = \bigcup_{\substack{1 \leq j,j' \leq g \\ j \neq j'}} \Sp(H(j) \oplus H(j')) \subset \Sp_{2g}(\bbF_2).\]
This is a generating set for $\Sp_{2g}(\bbF_2)$; for instance, it contains
all the elementary symplectic matrices (see, e.g., \cite{HahnOmeara}).

\begin{step}{3}
We prove $(\spadesuit)$: for $s \in S$ and $\phi \in T$ we have $\phi(s) \in \Lambda$.
\end{step}

Consider $s \in S$ and $\phi \in T$.  By definition, we can find:
\begin{itemize}
\item $1 \leq i,i' \leq g$ with $i \neq i'$ such that $s=\SymSub{V}$ with $V$ a symplectic subspace of $H$ satisfying $V \subset H(\hsi,\hips)$; and
\item $1 \leq j,j' \leq g$ with $j \neq j'$ such that $\phi \in \Sp(H(j) \oplus H(j'))$.
\end{itemize}
To show that $\phi(s) \in \Lambda$, we must prove that $\SymSub{\phi(V)} \in \Lambda$. 

There are two cases.  The first is that $\{j,j'\} = \{i,i'\}$.  Since
\[H = H(\hsi,\hips) \oplus H(i) \oplus H(i') = H(\hsi,\hips) \oplus H(j) \oplus H(j'),\]
the element $\phi \in \Sp(H(j) \oplus H(j'))$ acts trivially on $H(\hsi,\hips)$.  It follows
that $\phi(V) = V$ and there is nothing to prove.

The second case is that $\{j,j'\} \neq \{i,i'\}$.  Swapping $i$ and $i'$ if necessary, we
can assume that $i \notin \{j,j'\}$.  This implies that $\phi \in \Sp(H(j) \oplus H(j'))$ acts
trivially on $H(i)$.  Since we have an orthogonal decomposition
$H = H(\hsis) \oplus H(i)$
it follows that $\phi$ takes $H(\hsis)$ to $H(\hsis)$.  Since $V \subset H(\hsi,\hips) \subset H(\hsis)$, we
conclude that $\phi(V) \subset H(\hsis)$ and thus that $\SymSub{\phi(V)}$ lies in
the image of the map $\Pres_g^1(\hsis) \rightarrow \Pres_g^1$.  This implies that
$\SymSub{\phi(V)} \in \Lambda$, as desired.
\end{proof}

\end{document}